\newtheorem{theorem}{Theorem}[section]
\newtheorem{lemma}[theorem]{Lemma}
\newtheorem{proposition}[theorem]{Proposition}
\newtheorem{corollary}[theorem]{Corollary}
\newtheorem{definition}[theorem]{Definition}
\newtheorem{example}[theorem]{Example}
\newtheorem{conjecture}[theorem]{Conjecture}
\begin{document}
\title{p-adic J-homomorphisms and a product formula}
\author{Dustin Clausen}

\maketitle

\begin{abstract}
One-point compactification turns real vector spaces into spheres.  In homotopy theory, this transformation gets encoded in a map called the ``real J-homomorphism".  Here we define and investigate p-adic J-homomorphisms, which sort of turn p-adic vector spaces into spheres.  Our main theorem is a product formula for these J-homomorphisms, saying what happens when you start with a rational vector space.  This formula specializes to Hilbert's version of the quadratic reciprocity law after applying $\pi_2$.
\end{abstract}

\tableofcontents

\section*{Preface}
\subsection{Introduction}

\textit{Notation.}  When $p$ is a prime, $\mathbb{Q}_p$ stands for the $p$-adic numbers.  When $p=\infty$ it stands for the real numbers $\mathbb{R}$.  The phrase $p\leq\infty$ means that $p$ is either a prime number or $\infty$.\\

One-point compactification sends real vector spaces to pointed spheres, in such a way that isomorphisms go to homotopy equivalences and direct sum goes to smash product.  If the resulting spheres are considered only ``stably'' (i.e.\ in the formal limit under iterated suspension), this whole kit and caboodle is encoded in a map of spectra
$$J_\mathbb{R}\colon K(\mathbb{R})\rightarrow Pic(Sp),$$
the real J-homomorphism (or a version of it), from the algebraic K-theory of $\mathbb{R}$ to the spectrum of stable spheres under smash product.

In this paper we produce analogous maps
$$J_{\mathbb{Q}_p}\colon K(\mathbb{Q}_p)_{\geq 2}\rightarrow Pic(Sp)$$
for $p<\infty$, which we call the $p$-adic J-homomorphisms.  The subscript of $\geq 2$ denotes passage to the universal cover; this being necessary means that we don't quite have a way of turning $p$-adic vector spaces into spheres, only almost.

These $p$-adic J-homomorphisms are built in two pieces: an away-from-$p$ piece, based on the ``discrete models map'' for the complex J-homomorphism considered in works of May-Quinn-Ray-Tornehave (\cite{mqrt}) and Snaith (\cite{snaith}); and an at-$p$ piece, based on a construction of Bauer's in the context of $p$-compact groups (\cite{bauer}).

One classical application of the real J-homomorphism, due to Adams (\cite{adams}), concerns the stable homotopy groups of spheres $\pi_*^S$, which make their appearance via canonical isomorphisms $\pi_n^S\simeq \pi_{n+1}Pic(Sp)$ ($n>0$).  Let us ignore $2$-torsion for simplicity.  Then Adams showed that the real J-homomorphism produces a cyclic subgroup of $\pi_{4k-1}^S$ ($k>0$) of order the size of the denominator of the Bernoulli number $B_{2k}/k$, and that (in modern language) these cyclic subgroups completely account for the first chromatic layer of $\pi_*^S$ at odd primes.  We show that the $p$-adic J-homomorphisms have this same property.

\begin{theorem}\label{imj} Let $k>0$. After completion at an odd prime $\ell$, each $J_{\mathbb{Q}_p}$ for $p\leq\infty$ has the same image on $\pi_{4k}$, and this common image maps isomorphically to $\pi_{4k-1}$ of the $E(1)$-local sphere at $\ell$.\end{theorem}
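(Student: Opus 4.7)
The target satisfies $\pi_{4k}Pic(Sp)\cong\pi_{4k-1}^S$, and after $\ell$-completion the first chromatic piece $\pi_{4k-1}L_{E(1)}S^{\wedge}_\ell$ is a known finite cyclic group---the $\ell$-part of the denominator of $B_{2k}/4k$, by Adams. The plan is to show that for each $p\leq\infty$ the map $J_{\mathbb{Q}_p}$, after $\ell$-completion, hits exactly this group on $\pi_{4k}$, and that the composition with the map to $\pi_{4k-1}L_{E(1)}S^{\wedge}_\ell$ is an isomorphism onto this cyclic subgroup.

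The case $p=\infty$ is Adams' classical theorem on the image of the real J-homomorphism. For $p$ finite and $p\neq\ell$, the at-$p$ piece of $J_{\mathbb{Q}_p}$ becomes trivial after $\ell$-completion, leaving only the Snaith/MQRT discrete models map. The plan here is to combine this with the K-theory localization sequence $K(\mathbb{F}_p)\to K(\mathbb{Z}_p)\to K(\mathbb{Q}_p)$ and Quillen's identification $K(\mathbb{F}_p)^{\wedge}_\ell\simeq\mathrm{fib}(\psi^p-1\colon ku^{\wedge}_\ell\to ku^{\wedge}_\ell)$; the image on $\pi_{4k}$ is then controlled by the $\ell$-adic valuation of $p^{2k}-1$, and Adams' classical identity relating this valuation to the denominator of $B_{2k}/4k$ (so that letting $p$ vary over generators of $\mathbb{Z}_\ell^\times$ only relabels the same cyclic group) gives the required match.

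For $p=\ell$, the away-from-$p$ piece contributes nothing, and one must instead analyze Bauer's at-$p$ construction. The input is $K(\mathbb{Q}_\ell)^{\wedge}_\ell$, whose homotopy is known (via Hesselholt-Madsen combined with the Quillen-Lichtenbaum theorem of Voevodsky-Rost) to contain in degree $4k$ a cyclic contribution coming from $H^1_{\mathrm{et}}(\mathbb{Q}_\ell,\mathbb{Z}_\ell(2k))$, reflecting the cyclotomic character. I would trace Bauer's construction on this class and identify the resulting element of $\pi_{4k-1}Pic(Sp)^{\wedge}_\ell$ with the image-of-$J$ generator by checking that it is detected by the same $\ell$-adic $e$-invariant after $L_{E(1)}$-localization.

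This last identification is the main obstacle: the other cases are refinements of classical computations, whereas for $p=\ell$ the geometric input (Bauer's $p$-compact group construction applied to the Tate-twist class in $K(\mathbb{Q}_\ell)^{\wedge}_\ell$) has to be matched with the completely different description of the image-of-$J$ generator coming from the $K(1)$-local Adams operations picture. I would expect this to proceed by computing both sides through their common image in $\pi_{4k-1}L_{K(1)}S^{\wedge}_\ell$ via the logarithm, thereby exhibiting the required cyclic subgroup.
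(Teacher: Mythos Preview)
Your three-case decomposition ($p=\infty$, $p=\ell$, $p\neq\ell,\infty$) matches the paper's, and the inputs you name are largely the right ones. But there is a genuine gap in the case $p\neq\ell,\infty$. You say the image is ``controlled by the $\ell$-adic valuation of $p^{2k}-1$'' and that for $p$ a generator of $\mathbb{Z}_\ell^\times$ this matches the Bernoulli denominator. The theorem, however, is for \emph{every} prime $p$, and when $p$ fails to generate $\mathbb{Z}_\ell^\times$ topologically, the source $\pi_{4k-1}K(\mathbb{F}_p)_{\widehat{\ell}}$ is strictly larger than the expected image-of-$J$ group. Knowing the size of the source does not bound the image from above; you must show that $J_{\mathbb{F}_p}$ kills the excess, and nothing in your outline does this. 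The paper obtains the lower bound via Rezk's logarithm and a norm-map argument, but for the upper bound it invokes the product formula (the paper's main theorem) together with Soul\'e's surjectivity of $K_{n+1}(\mathbb{Q})\to\bigoplus_q K_n(\mathbb{F}_q)$ to write $J_{\mathbb{F}_p}(x)=-J_{\mathbb{R}}(\overline{x})-J_{\mathbb{Q}_\ell}(\overline{x})$, thereby reducing to the two cases already settled. The paper itself calls this step ``intrinsically unsatisfying'' and records a conjecture that would replace it; so the gap is real and not cheaply fillable.

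A secondary point: the paper's unifying tool across all three cases is Rezk's $K(1)$-local logarithm $Pic(L_{K(1)}Sp)\to\Sigma L_{K(1)}S$, which you invoke only for $p=\ell$. For $p=\infty$ the paper does not simply cite Adams but reproves the statement using $\operatorname{Aut}(\mathbb{C})$-invariance of $J_{\mathbb{C}}$ plus the logarithm. And for $p=\ell$, the identification you flag as ``the main obstacle'' is not carried out by tracing Bauer's construction on an \'etale Tate-twist class and comparing $e$-invariants; it is done by the much softer check that $\pi_1(J_{\mathbb{Z}_\ell})$ is $x\mapsto x^{-1}$ on $\mathbb{Z}_\ell^\times$, after which the logarithm propagates this single low-degree fact to all odd degrees via the B\"okstedt--Hesselholt--Hsiang--Madsen splitting of $K(\mathbb{Z}_\ell)_{\widehat{\ell}}$, which contains a $(\Sigma L_{K(1)}S)_{\geq 1}$ summand.
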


We prove this by producing a neutral candidate for the common image, then identifying it separately with each of the $p$-adic images, simultaneously showing that it maps isomorphically to $\pi_{4k-1}L_{E(1)}S$.

Now we state our main theorem, a product formula relating the different J-homomorphisms.

\begin{theorem}\label{product} The product over all $p\leq\infty$ of the composed maps
$$K(\mathbb{Q})_{\geq 2}\rightarrow K(\mathbb{Q}_p)_{\geq 2}\overset{J_{\mathbb{Q}_p}}{\longrightarrow} Pic(Sp)$$
canonically exists and is canonically trivial (in the sense of Appendix \ref{sum}).\end{theorem}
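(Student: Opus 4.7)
The result should be viewed as a spectrum-level refinement of the adelic product formula for $\mathbb{Q}$, specializing on $\pi_2$ to Hilbert's reciprocity law. I would attack it via arithmetic fracture of $Pic(Sp)$, reducing to an $\ell$-by-$\ell$ check for primes $\ell$ (the rational piece of $Pic(Sp)_{\geq 2}$ is already trivial since the rational sphere has no higher homotopy).

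Before the main argument one must make sense of the product itself. In the framework of Appendix \ref{sum}, what is required is that for each fixed input from $K(\mathbb{Q})_{\geq 2}$ and each fixed prime $\ell$, only finitely many of the summands $J_{\mathbb{Q}_p}$ contribute nontrivially after $\ell$-completion. This should reduce to an integrality statement: a rational vector space with extra $K$-theoretic structure acquires a canonical $\mathbb{Z}_p$-integral refinement for almost all $p$, which kills both the away-from-$p$ and the at-$p$ pieces of $J_{\mathbb{Q}_p}$ after $\ell$-completion at those places.

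With the finite-sum reduction in hand, fix a prime $\ell$ and work in $\ell$-complete $Pic(Sp)$. For $p\neq\ell$ only the away-from-$p$ piece of $J_{\mathbb{Q}_p}$ survives, and by the discrete-models construction of May--Quinn--Ray--Tornehave and Snaith it is controlled by the action of $\mathbb{Q}_p^\times$ on $\ell$-adic roots of unity (through the $p$-adic valuation and reduction mod $p$). For $p=\ell$ Bauer's at-$\ell$ piece takes over and is controlled by the $\ell$-adic cyclotomic character of $\mathbb{Q}_\ell$. Pulling back to $K(\mathbb{Q})_{\geq 2}$, the composite should factor through a cyclotomic-character spectrum assembled from the places of $\mathbb{Q}$, and the canonical trivialization is the spectral avatar of the global compatibility of these local cyclotomic characters — in other words, of the product formula applied to the $\ell$-part of the cyclotomic character over $\mathbb{Q}$.

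The main obstacle will be upgrading the trivialization from a mere null-homotopy (enough for Hilbert reciprocity on $\pi_2$) to a canonical one in the sense of Appendix \ref{sum}, coherently across all $\ell$. I expect the most natural route is to exhibit the entire composite as arising from a cofiber sequence involving $K(\mathbb{Q})$, $K(\mathbb{Z})$, and the $K(\mathbb{F}_p)$'s from Quillen's localization sequence: in such a sequence the null-homotopy is the structural boundary vanishing, rather than anything chosen by hand. Verifying that the various $J_{\mathbb{Q}_p}$, with their decomposition into at-$p$ and away-from-$p$ pieces, are compatible with this cofiber sequence — and do so uniformly enough to produce a single trivialization in Appendix \ref{sum}'s sense — is where the real work lies.
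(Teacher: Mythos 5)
Your reductions are the paper's: strip off $\pi_{\leq 1}$, use Serre finiteness to split $Pic(Sp)_{\geq 2}\simeq\prod_\ell Pic(Sp_{\widehat\ell})_{\geq 2}$, note that after $\ell$-completion only $J^{tame}_{\mathbb{Q}_p}$ survives for $p\neq\ell$ and only the wild piece for $p=\ell$, and use Quillen's localization sequence for $\mathbb{Z}\subseteq\mathbb{Q}$ both to make the infinite sum canonical (the boundary map lands in the wedge $\vee_p\Sigma K(\mathbb{F}_p)$, which is precisely a ``way of summing'' in the sense of Appendix \ref{sum}) and to structure the trivialization. Your last paragraph correctly locates where the canonicity has to come from. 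What is missing is the actual content that fills this skeleton. After the reductions, two things must be produced: (i) a canonical homotopy, over $K(\mathbb{Z})$ (more precisely over $K$ of finitely generated abelian groups with $P$-primary torsion), between the wild map $M\mapsto$ (stable top cell of the torus $BM$), which computes $J_{\mathbb{Z}_\ell}$, and $M\mapsto J_{\mathbb{R}}(M\otimes\mathbb{R})$; and (ii) an identification of the resulting self-equivalence of the zero map on $\vee_pK(\mathbb{F}_p)$ with $J_{\mathbb{F}_p}$. This is a refinement of Brouwer degree theory for tori --- an isogeny of tori with kernel $V$ multiplies the top cell by the degree-$\#V$ map, i.e.\ by $J_{\mathbb{F}_p}(V)$ --- and making it coherent at the level of $K$-theory spectra is Theorem \ref{tori} together with the entire comparison machinery of Appendix \ref{spheres} (the two sphere-extraction mechanisms $Sph^{top}$ and $Sph^{alg}$ and Theorem \ref{unithm} relating them). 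Nothing in your proposal supplies this homotopy.

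The cyclotomic-character factorization you offer in its place would not work. First, the target of the reduced statement is $Pic(Sp_{\widehat\ell})_{\geq 2}$, whose homotopy groups are the full $\ell$-complete stable stems; a ``cyclotomic-character spectrum'' can at best see the $K(1)$-local shadow of the $J$'s, and the null-homotopy is needed before any such further localization (the paper explicitly cannot rule out $J_{\mathbb{Q}_\ell}$ producing higher chromatic classes, so its $\ell$-complete behavior is \emph{not} controlled by the cyclotomic character). Second, even $K(1)$-locally, the identification of $J_{\mathbb{F}_p}$ with a Frobenius/cyclotomic action on \'etale homotopy types is stated in the paper only as a conjecture (following Theorem \ref{trueimj}); you would be assuming an open problem. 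So the proposal has a genuine gap exactly where the paper's main technical work sits.
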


This reduces to a topological statement, namely a certain refinement of Brouwer degree theory for tori (Theorem \ref{tori}).  However, on $\pi_2$ it recovers a number-theoretic statement, namely Hilbert's product formula for the quadratic norm residue symbols.  Indeed, for each $p\leq\infty$ the map $\pi_2(J_{\mathbb{Q}_p})\colon K_2(\mathbb{Q}_p)\rightarrow \pi^S_1$ is nontrivial\footnote{This does not follow from Theorem \ref{imj}, since it requires $\ell=2$.  But it can be proved either by shadowing the proof of Theorem \ref{imj} and making the necessary adjustments for $\ell=2$, or by explicitly identifying $\pi_1(J_{\mathbb{F}_p})$ (\ref{tame}) with the Legendre symbol through Zolotarev's lemma.  This latter route directly establishes non-triviality only when $p$ is an odd prime, but then Theorem \ref{product} itself can be used to verify the remaining cases $p=2,\infty$ by plugging in suitable elements of $K_2(\mathbb{Q})$.}; but there's only one nontrivial map from $K_2(\mathbb{Q}_p)$ to a group of order two:  the one which, via Matsumoto's presentation of $K_2$ of a field, corresponds to the $p$-adic Hilbert symbol (see \cite{milnor} Appendix).  Thus $\pi_2(J_{\mathbb{Q}_p})$ is an incarnation of the $p$-adic Hilbert symbol, and Theorem \ref{product} refines Hilbert's product formula to the level of spectra.

On higher homotopy groups, the statement given by Theorem \ref{product} is presumably closely related to Banaszak's generalization of Moore's exact sequence to the higher K-groups of number fields (\cite{ban}), when specialized to the number field $\mathbb{Q}$.  However, the connection is not transparent, since our methods and language are different.

\subsection{A correction and apology (please read if you were at Oberwolfach with me in September 2011)}

At Oberwolfach, I made the claim of Theorem \ref{imj} in all degrees, not just degrees a multiple of four.  In other words, I claimed that the relevant images are zero in degrees not a multiple of four.  However, I was mistaken in my argument for the case $p=\ell$.  In fact I don't know how to rule out $J_{\mathbb{Q}_\ell}$ producing higher chromatic classes at $\ell$.  I'm sorry for the misinformation!

\subsection{Outline of contents}\label{outline}

\indent\indent Section \ref{background} lays out some background material, recalled from references.

Section \ref{realj} defines the real J-homomorphism as a map $J_{\mathbb{R}}\colon K(\mathbb{R})\rightarrow Pic(Sp)$, and, using a theorem of Suslin (\cite{suslin}), makes the connection between this $J_{\mathbb{R}}$ and its more classical version.

Section \ref{padicj} defines the $p$-adic J-homomorphisms $J_{\mathbb{Q}_p}\colon K(\mathbb{Q}_p)_{\geq 2}\rightarrow Pic(Sp)$.  These are more complicated than their real analog, and are built in two pieces: the ``tame'' piece, which only sees prime-to-$p$ information, and the ``wild'' piece, which recovers the stuff at $p$.

The tame piece (\ref{tame}) is defined through the K-theory of the residue field $\mathbb{F}_p$, and there combinatorially: a finite dimensional vector space over $\mathbb{F}_p$ is in particular a finite set, and from a finite set we can make a stable map of spheres via the Pontryagin-Thom construction.

The wild piece (\ref{wild}) is defined through the K-theory of the $p$-adic integers $\mathbb{Z}_p$, and there topologically:  a finite free module over $\mathbb{Z}_p$ deloops to a $p$-complete torus, and a $p$-complete torus has a ``stable top cell'' which is a $p$-complete sphere.

Section \ref{imjsect} proves Theorem \ref{imj} (concerning the image of the $J_{\mathbb{Q}_p}$ on homotopy groups).  The key tool here is Rezk's $K(1)$-local logarithm (\cite{log}), which permits a $K(1)$-local analysis of $Pic(Sp)$.

Section \ref{proof} proves the product formula (Theorem \ref{product}), by straightforward reduction to a certain statement about tori.  While this reduced statement is intuitively plausible, its formal proof requires wrangling a bunch of higher homotopical coherences, and we delegate the technical aspects of our chosen formalization to Appendix \ref{spheres}.

Section \ref{remarks} finishes with some remarks and speculations.

Then there are the appendices.  Appendix \ref{sum} discusses the notion of an infinite sum of maps of spectra; this is required for the formulation of Theorem \ref{product}.

Appendix \ref{sdot} recalls Wadhausen's $S_\bullet$-construction from an $\infty$-categorical perspective.  This is the subject of a forthcoming paper of Barwick and Rognes, so we are scant on even the few details we need.

Finally, there is the long Appendix \ref{spheres}, which constitutes the technical heart of the paper.  It shows how to use duality to functorially extract ``stable top cells'' (in the form of invertible spectra) from certain kinds of geometric objects, generalizing e.g.\ part of Rognes's work \cite{stdual} on ``stably dualizable groups''.  To get all the required coherences for this extraction we use a helpful $\infty$-categorical formalism developed by Lurie -- the tensor product on stable presentable $\infty$-categories (\cite{ha} Section 6.3).

\subsection{Thanks}

I've had nice conversations with Dennis Gaitsgory, Mike Hopkins, Thomas Kragh, Anatoly Preygel, Nick Rozenblyum, Vesna Stojanoska, and John Tate concerning this work.  Warm thanks to them for their attention and helpful input.  After multiplication by a sizable factor, these sentiments also extend to my advisor Jacob Lurie, who I further thank for sharing his clarifying insights and perspectives so freely over the past few years.

I'd also like to acknowledge some papers whose influence might not otherwise be apparent from the main text.  The first is that of Hill on the metaplectic groups (\cite{hill}), which also proves Hilbert reciprocity by geometric/homotopical means:  its arguments were inspirational.  Another is the above-referenced paper of Banaszak (\cite{ban}):  its similar prior results provided a valuable source of confidence while ours were being worked out.  Finally, there is the paper of Arbarello, De Concini and Kac (\cite{ack}), with its linear-algebraic proof of Hilbert reciprocity in the function field case:  the original goal for this project was to translate its arguments --- viewed through the lens of algebraic K-theory as advocated by Beilinson, c.f.\ \cite{dr} Section 5.5 --- to the number field case.  In this context, the unfinished paper \cite{ks} of Kapranov-Smirnov was also a source of inspiration, having led to my considering the map $J_{\mathbb{F}_p}$ (Section \ref{tame}).

Finally, I am grateful to the NSF for their support through the GRFP, and to the MIT math department for providing an engaging, tolerant, and friendly workplace.

\section{Background and notation}\label{background}

\subsection{$\infty$-categories}\label{infcat}

We rely on the theory of $(\infty,1)$-categories as developed in Lurie's books \cite{htt} and \cite{ha}.  Following those references, we call $(\infty,1)$-categories simply $\infty$-categories.

Given an $\infty$-category $\mathcal{C}$ and objects $X,Y\in\mathcal{C}$, we denote by $Map_\mathcal{C}(X,Y)$ or just $Map(X,Y)$ the space of morphisms from $X$ to $Y$ in $\mathcal{C}$; it can be viewed as an object in the $\infty$-category $\mathcal{S}$ of spaces (that is, homotopy types, or $\infty$-groupoids).  We further denote by $[X,Y]_\mathcal{C}$ or just $[X,Y]$ the set $\pi_0Map(X,Y)$ (homotopy classes of maps).

A small $\infty$-category $\mathcal{C}$ has a ``space of objects'' $\mathcal{C}^\sim\in\mathcal{S}$, obtained by discarding the non-invertible morphisms from $\mathcal{C}$.  When $\mathcal{C}$ is a symmetric monoidal $\infty$-category, $\mathcal{C}^\sim$ is an $E_\infty$-space (``abelian monoid up to coherent homotopy'').

\subsection{Bousfield localizations of spectra}\label{loc}
Let $Sp$ denote the $\infty$-category of spectra.  We will often want to zoom in on certain phenomena in $Sp$, ignoring others.  Formally, this can be accomplished by means of Bousfield localization.

We define a Bousfield localization of spectra to be an exact accessible localization $L\colon Sp\rightarrow Sp$ (see \cite{htt} Section 5.5.4).  Such an $L$ is uniquely determined by either of two full $\infty$-subcategories of $Sp$:  the essential image of $L$, called the $\infty$-category of $L$-local spectra and denoted $LSp$ (this is the zoomed-in category), or the kernel of $L$, called the $\infty$-category of $L$-acyclic spectra (this is the stuff we're ignoring when we zoom in).  The $\infty$-category $LSp$ is the focus, but often it's the class of $L$-acyclic spectra that's easier to describe.

The functor $L\colon Sp\rightarrow LSp$ makes $LSp$ an idempotent object in the $\infty$-category of stable presentable $\infty$-categories; in particular, $LSp$ carries a canonical symmetric monoidal structure (given on objects by $(X,Y)\mapsto L(X\wedge Y)$), and $L$ canonically promotes to a symmetric monoidal functor $Sp\rightarrow LSp$.  See \cite{ha} Section 6.3.2.

In Bousfield's original paper \cite{bous} on the subject, a convenient method for producing Bousfield localizations is described:  for any spectrum $E$, there is a unique Bousfield localization $L_E\colon Sp\rightarrow Sp$, called the Bousfield localization at $E$, for which the $L_E$-acyclic spectra are exactly those with vanishing $E$-homology.  So the idea here is to zoom in on the phenomena which can be detected by the homology theory $E$.

\subsubsection{Example 1: inversion}\label{inv}

When $E=S\mathbb{Z}[1/P]$ is the $\mathbb{Z}[1/P]$-Moore spectrum for a set $P$ of primes, we denote $L_E(X)$ by $X[1/P]$ and call it $P$-inversion.  Taking homotopy groups $\pi_n$ intertwines this $P$-inversion functor with the analogous one $M\mapsto M[1/P]:=M\otimes\mathbb{Z}[1/P]$ on abelian groups.

When $P$ consists of a single prime $p$, we say $p$-inversion and write $X\mapsto X[1/p]$; when $P$ is all primes but $p$, we say $p$-localization and write $X\mapsto X_{(p)}$.

\subsubsection{Example 2: completion}\label{compl}
When $E=S\mathbb{Z}/p\mathbb{Z}$ is the $\mathbb{Z}/p\mathbb{Z}$-Moore spectrum for a prime $p$, we denote $L_E(X)$ by $X_{\widehat{p}}$ and call it $p$-completion.  On homotopy groups, there is a short exact sequence
$$0\rightarrow Ext(\mathbb{Z}/p^\infty,\pi_nX)\rightarrow\pi_nX_{\widehat{p}}\rightarrow Hom(\mathbb{Z}/p^\infty,\pi_{n-1}X)\rightarrow 0;$$
in particular, when restricted to spectra all of whose homotopy groups have $p$-torsion of finite exponent, taking homotopy groups intertwines this $p$-completion functor with the (somewhat) analogous one $M\mapsto \underset{\leftarrow}{lim}M/p^kM$ on abelian groups.

We call the functor $X\mapsto \prod_p X_{\widehat{p}}$ just completion, and we say $X$ is complete if the natural map $X\rightarrow\prod_p X_{\widehat{p}}$ is an equivalence.  For instance, if $X$ has finite homotopy groups then $X$ is complete.

\subsubsection{Example 3: $K(1)$-localization}\label{k1}
The case $E=K(1)$ of the first Morava K-theory  (at an undenoted prime $\ell$) belongs to the chromatic theory, and is more subtle.  The resulting localization $L_{K(1)}$ can be gotten by first localizing at complex K-theory $K$ and then $\ell$-completing the result.  Practically speaking, $L_{K(1)}$ serves to isolate and amplify ``chromatic level one'' (Adams-Bott-Toda) periodicity phenomena in (mod $\ell^k$) homotopy groups of spectra.  See Ravenel's orange book \cite{orange} for further discussion.

Because of its close connection with periodicity, $K(1)$-localization has some interesting features distinguishing it from inversion and $p$-completion.  For instance, $L_{K(1)}$ kills all bounded-above spectra, and in particular depends only on the connective cover of a spectrum; but in fact an even stronger claim holds, as shown by Bousfield (\cite{bous2}): the functor $L_{K(1)}\colon Sp\rightarrow L_{K(1)}Sp$ factors canonically through the zeroth space functor $\Omega^\infty$ from spectra to pointed spaces, via a functor from pointed spaces to $L_{K(1)}Sp$ called the Bousfield-Kuhn functor (Kuhn did the case of higher $K(n)$).

\subsubsection{Bousfield localizations form a poset}\label{bousfact}
Let $L$ and $L'$ be two Bousfield localizations of spectra, and suppose that every $L$-acyclic spectrum is also $L'$-acyclic.  Then $L'$ canonically factors through $L$, even symmetric monoidally, and we say that $L'$ is a further localization of $L$.  For example, $P'$-inversion is a further localization of $P$-inversion whenever $P'\supseteq P$; also $p$-completion is a further localization of $p$-localization, and $K(1)$-localization is a further localization of both $K$-localization and $\ell$-completion.

Another example is that Bousfield localization at the Eilenberg-Maclane spectrum $H\mathbb{Z}[1/P]$ is a further localization of $P$-inversion.  The resulting natural transformation $(-)[1/P]\rightarrow L_{H\mathbb{Z}[1/p]}$ is an equivalence on bounded below spectra, and thus $L_{H\mathbb{Z}[1/P]}$ is often an acceptable substitute for $P$-inversion.  The same remark applies to $L_{H\mathbb{Z}/p\mathbb{Z}}$ and $p$-completion.

\subsection{Group completion}\label{gp}

Group completion is the homotopy-theoretic analog of the procedure of freely adjoining inverses to a monoid in order to obtain a group.  We discuss it here in the ``weak abelian'' or ``$E_\infty$'' setting.

Let $M$ be an $E_\infty$-space.  Iteratively applying the classifying space construction to $M$ produces a connective spectrum $M^{gp}=B^\infty M$ called the group completion of $M$.  The resulting functor $M\mapsto M^{gp}$ from $E_\infty$-spaces to spectra is left adjoint to the functor sending a spectrum $E$ to its zeroth space $\Omega^\infty E$ with $E_\infty$-structure of loop multiplication.  Furthermore, these adjoint functors $(-)^{gp}$ and $\Omega^\infty$ restrict to an equivalence between group-like $E_\infty$-spaces --- defined by the condition of $\pi_0(-)$ being a group --- and connective spectra.  See Segal's original article \cite{segal}, \cite{gl1} Section 3.5, or \cite{ha} Section 5.1.3 for treatments of these ideas.

\subsection{Picard spectra}\label{pic}
Let $\mathcal{C}$ be a symmetric monoidal $\infty$-category.  Restricting to the invertible components of its $E_\infty$-space of objects $\mathcal{C}^\sim$ (\ref{infcat}) gives a group-like $E_\infty$-space $Inv(\mathcal{C})$.\footnote{One should first verify that $Inv(\mathcal{C})$ is small.  This will indeed be the case in our examples, c.f.\ the following paragraphs.}  We denote the corresponding connective spectrum (\ref{gp}) by $Pic(\mathcal{C}):=Inv(\mathcal{C})^{gp}$, and call it the Picard spectrum of $\mathcal{C}$.

Thus the zeroth homotopy group $\pi_0 Pic(\mathcal{C})$ is the abelian group of equivalence classes of invertible objects of $\mathcal{C}$, and the higher homotopy groups can be accessed via the equivalence $\Omega\Omega^\infty Pic(\mathcal{C})\simeq \Omega Inv(\mathcal{C})\simeq Aut(\mathbf{1})$, the space of self-equivalences of the unit object $\mathbf{1}$ in $\mathcal{C}$.

Most of our examples of interest are of the form $\mathcal{C}=LSp$ for some Bousfield localization $L$ of spectra (\ref{loc}).  For information on $\pi_0Pic(LSp)$ see the paper \cite{hms} of Hopkins, Mahowald, and Sadofsky; as for the higher homotopy groups, we have
$$(\Sigma^{-1}Pic(LSp))_{\geq 0}\simeq (LS)^\times,$$
the spectrum of units of the $E_\infty$-ring spectrum $LS$ (for which see \cite{mqrt} Ch.\ VI or \cite{gl1}); thus $\pi_1 Pic(LSp)$ identifies with the units of $\pi_0 LS$, and for $n\geq 2$ we have $\pi_n Pic(LSp)\simeq  \pi_{n-1} LS$ by translating the unit of $LS$ to zero.

\subsection{Algebraic K-theory}\label{k}
Let $k$ be a unital associative (discrete) ring.  We define the K-theory spectrum $K(k)$ to be the group completion of the $E_\infty$-space $Vect_k^\sim$ of finitely generated projective (left) $k$-modules up to isomorphism under direct sum.

This model for $K(k)$, given by Segal in \cite{segal}, is our primary one; however, for some purposes (such as understanding Quillen's localization theorem -- \cite{Q} Section 5) it is convenient to use a different model, based on the idea of group competing not with respect to the direct sum operation $\oplus$, but instead with respect to the multi-valued operation which says that $B$ is the sum of $A$ and $C$ whenever there's a short exact sequence $0\rightarrow A\rightarrow B\rightarrow C\rightarrow 0$ in $Vect_k$.  This kind of multi-valued group completion can be encoded simplicially using Waldhausen's $S_\bullet$-construction (\cite{wald}); we expose this in Appendix \ref{sdot} in a context that will be convenient for us.

\section{The real J-homomorphism}\label{realj}

In this section, we introduce (or rather recall) the real J-homomorphism, and state a couple of its basic properties.

To every finite-dimensional real vector space we can associate an invertible spectrum, namely the suspension spectrum of its one-point compactification.  This gives an $E_\infty$-map $Vect_{\mathbb{R}}^\sim\rightarrow Inv(Sp)$, direct sum going to smash product; thus on group completion (\ref{gp}) we get a map of spectra
$$J_\mathbb{R}\colon K(\mathbb{R})\rightarrow Pic(Sp)$$
that we call the real J-homomorphism.  Here is a picture:

\begin{figure}[H]
\centering
\includegraphics[scale=.6]{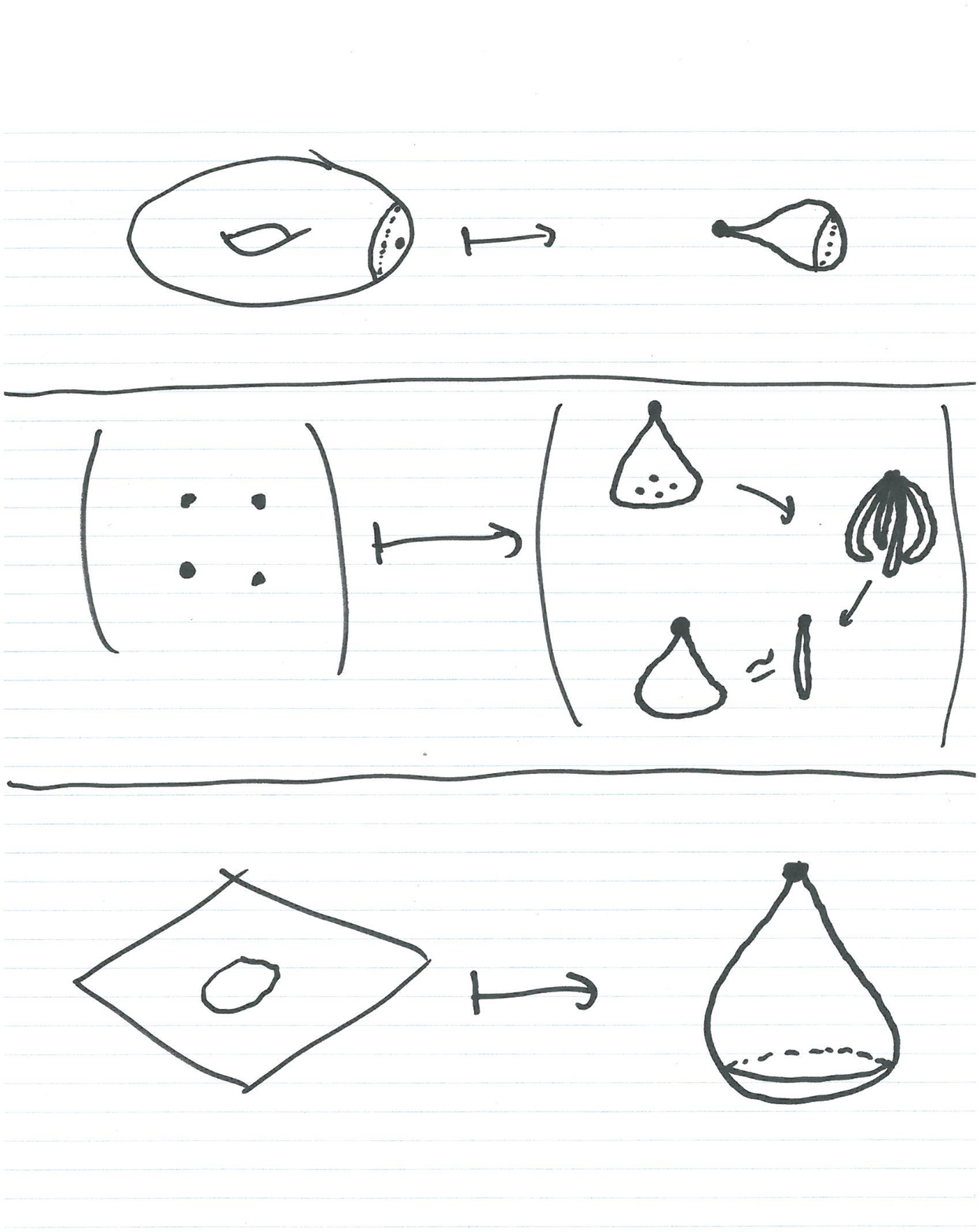}
\caption{$J_\mathbb{R}$ sends the $\mathbb{R}$-vector space on the left to the sphere on the right.}
\end{figure}

\textit{Remark.}  Here is the relationship between $J_{\mathbb{R}}$ and the stable real J-homomorphism as classically defined.  Since the one-point compactification functor on finite-dimensional real vector spaces $V$ is continuous for the Euclidean topology on $GL(V)$, we see that $J_\mathbb{R}$ factors naturally through $K^{top}(\mathbb{R})\simeq\mathbb{Z}\times BO$, this being the group completion of $Vect_\mathbb{R}^\sim$ viewed as a topological category:
$$K(\mathbb{R})\rightarrow K^{top}(\mathbb{R}) \rightarrow Pic(Sp).$$
On $1$-connected covers, the second map is equivalent to the classical stable real J-homomorphism $SO\rightarrow SG$ from the infinite special orthogonal group to the infinite loop space of degree one self-maps of the sphere spectrum.  Thus our $J_\mathbb{R}$ can be recovered from the classical one.  But the converse is true as well, since on the one hand the map $K(\mathbb{R})\rightarrow K^{top}(\mathbb{R})$ is an equivalence on completion (\ref{compl}) by a result of Suslin (\cite{suslin} Cor.\ 4.7), and on the other hand $Pic(Sp)_{\geq 1}$ is complete by Serre finiteness.\\

We will need the following trivial lemma concerning the behavior of $J_\mathbb{R}$ on $\pi_0$:

\begin{lemma}\label{lowreal}
On $\pi_0$, the map $J_{\mathbb{R}}$ induces the identity $\mathbb{Z}\rightarrow\mathbb{Z}$ (i.e.\ it sends the unit vector space $\mathbb{R}$ to the one-sphere $\Sigma S$).
\end{lemma}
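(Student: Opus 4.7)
The plan is to unwind the definitions on both sides of $J_\mathbb{R}$ and compare generators. First I would identify $\pi_0 K(\mathbb{R}) \cong \mathbb{Z}$: by the Segal model recalled in Section \ref{k}, $\pi_0 K(\mathbb{R})$ is the group completion of $\pi_0 Vect_\mathbb{R}^\sim$, and since every finitely generated projective $\mathbb{R}$-module is free of uniquely determined dimension, $\pi_0 Vect_\mathbb{R}^\sim$ is the commutative monoid $\mathbb{N}$ with generator $[\mathbb{R}]$. Hence $\pi_0 K(\mathbb{R}) \cong \mathbb{Z}$ with generator $[\mathbb{R}]$.

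Next, on the target, I would invoke the classical fact that $\pi_0 Pic(Sp) \cong \mathbb{Z}$ with generator $[\Sigma S]$: every invertible spectrum is equivalent to a shift of the sphere, and the shift gives the isomorphism with $\mathbb{Z}$. (This is the unlocalized case of the situation discussed in Section \ref{pic} via \cite{hms}.) Under these two identifications, the lemma reduces to checking that $J_\mathbb{R}([\mathbb{R}]) = [\Sigma S]$.

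This last check is immediate from the construction of $J_\mathbb{R}$ given just before the lemma: $J_\mathbb{R}$ sends a finite-dimensional real vector space $V$ to the suspension spectrum of its one-point compactification. Taking $V = \mathbb{R}$, the one-point compactification is $\mathbb{R}^+ \simeq S^1$, and $\Sigma^\infty S^1 \simeq \Sigma S$, which corresponds to $1 \in \mathbb{Z} \cong \pi_0 Pic(Sp)$.

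There is no substantive obstacle here; as the author flags, this is a trivial normalization lemma confirming that $J_\mathbb{R}$ has the expected degree on $\pi_0$. The only non-definitional input is the identification of $\pi_0 Pic(Sp)$ with $\mathbb{Z}$, which is standard.
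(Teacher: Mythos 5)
Your proposal is correct and matches the paper's intent exactly: the paper offers no written proof because the lemma is definitional, and the parenthetical in its statement already identifies the content as $J_\mathbb{R}(\mathbb{R})=\Sigma^\infty(\mathbb{R}^+)\simeq\Sigma S$, which is precisely your final check. The only extra material you supply is the standard identification of $\pi_0 Pic(Sp)$ with $\mathbb{Z}$ via shifts of the sphere, which is a reasonable thing to make explicit.
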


More seriously, we also need the following lemma concerning the map $J_\mathbb{C}\colon K(\mathbb{C})\rightarrow Pic(Sp)$ given by precomposing $J_\mathbb{R}$ with the forgetful map $K(\mathbb{C})\rightarrow K(\mathbb{R})$.  It is essentially part of Sullivan's proof of the complex Adams conjecture (\cite{sullivan}); we include it here only in order for our proof of Theorem \ref{imj} to be self-contained in matters of $J$.

\begin{lemma}\label{discreteadams}
Let $\ell$ be any prime.  Then the composition $K(\mathbb{C})\rightarrow Pic(Sp)\rightarrow Pic(Sp_{\widehat{\ell}})$ of $J_\mathbb{C}$ with the $\ell$-completion map is (homotopy) invariant under the action of $Aut(\mathbb{C})$ on $K(\mathbb{C})$.
\end{lemma}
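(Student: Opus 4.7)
My plan is to follow Sullivan's original argument for the complex Adams conjecture (\cite{sullivan}), of which this lemma is essentially a reformulation.

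First, I would reduce to topological K-theory. Any $\sigma \in Aut(\mathbb{C})$ acts trivially on $\pi_0 K(\mathbb{C}) \simeq \mathbb{Z}$, so the question concerns the $\geq 1$-cover. The target $Pic(Sp_{\widehat{\ell}})_{\geq 1}$ is $\ell$-complete by Serre finiteness, and by Suslin (\cite{suslin} Cor.~4.7) the forgetful comparison $K(\mathbb{C})_{\geq 1} \to K^{top}(\mathbb{C})_{\geq 1} \simeq BU$ is an equivalence after $\ell$-completion; so our composed map canonically factors through $BU_{\widehat{\ell}}$, and the $Aut(\mathbb{C})$-action transports along this equivalence to an action on $BU_{\widehat{\ell}}$. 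A direct identification --- comparing via Suslin rigidity with $K(\overline{\mathbb{Q}})_{\widehat{\ell}}$, where the Galois action on $\pi_{2k} \simeq \mathbb{Z}_\ell(k)$ is standard --- shows that each $\sigma$ acts on $BU_{\widehat{\ell}}$ as the $\ell$-adic Adams operation $\psi^{\chi(\sigma)}$, where $\chi : Aut(\mathbb{C}) \to \widehat{\mathbb{Z}}_\ell^{\times}$ is the cyclotomic character recording the action on $\ell$-power roots of unity. The lemma is thus reduced to the $\ell$-completed complex Adams conjecture: that the $\ell$-completion of $J_{\mathbb{C}}$ is invariant under all $\ell$-adic Adams operations.

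Second, to establish this I would invoke Sullivan's étale-homotopical argument. The key input is that $BU_{\widehat{\ell}}$ is realized as the $\ell$-completed étale homotopy type of $BGL_{\infty, k}$ over any algebraically closed field $k$ of characteristic coprime to $\ell$, whereas the target $Pic(Sp_{\widehat{\ell}})_{\geq 1}$ is an absolute object independent of the base field. If one can lift the $\ell$-completion of $J_\mathbb{C}$ through such an étale description, the desired invariance is automatic. Sullivan realizes such a lift through finite subgroups and Brauer lifting: every complex representation of a finite group $G$ is conjugate to one defined over $\overline{\mathbb{Q}}$, and via a place $\overline{\mathbb{Q}} \hookrightarrow W(\overline{\mathbb{F}}_p) \twoheadrightarrow \overline{\mathbb{F}}_p$ for an auxiliary prime $p \neq \ell$, its $\ell$-completed sphere bundle identifies with the pullback of the tame J-homomorphism $J_{\mathbb{F}_p}$ of Section \ref{tame}; this pullback is manifestly $Aut(\mathbb{C})$-invariant, since it retains only finite-set-level data of the representation after Brauer lift.

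The main obstacle is coherence: upgrading these pointwise agreements to a homotopy of spectrum maps, compatible under direct sum, rather than merely an isomorphism on homotopy groups. This is where Sullivan expends the most effort. In the present $\infty$-categorical setting, a reasonably clean route would be to realize Brauer lifting as an $E_\infty$-map between the K-theory spectra of complex and of $\overline{\mathbb{F}}_p$-representations of finite groups, and then to exploit the naturality of Suslin rigidity with respect to Frobenius, stringing everything together into a natural transformation of spectrum-valued functors on $Aut(\mathbb{C})$.
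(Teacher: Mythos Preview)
Your approach is essentially correct --- it is Sullivan's original route --- but the paper takes a much more direct path that sidesteps every difficulty you flag. Rather than passing to $K^{top}(\mathbb{C})$ via Suslin, identifying the $Aut(\mathbb{C})$-action with Adams operations, and then invoking Brauer lifting through an auxiliary prime, the paper simply observes that the one-point compactification of $V\in Vect_\mathbb{C}$ is the cofiber of $(V-0)\to V$, and that both $V$ and $V-0$ are algebraic varieties over $\mathbb{C}$. \'Etale homotopy theory (the Artin--Mazur comparison) shows that the functor $X\mapsto(\Sigma^\infty_+X(\mathbb{C}))_{\widehat{\ell}}$ on $Var_\mathbb{C}$ is $Aut(\mathbb{C})$-invariant, since it extends to all scheme maps; so $(J_\mathbb{C})_{\widehat{\ell}}$ factors $Aut(\mathbb{C})$-equivariantly through this functor and inherits the invariance.

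What the paper's approach buys: no Suslin, no Adams operations, no Brauer lift, no auxiliary prime, and crucially no coherence wrangling --- the $E_\infty$-structure comes for free since $V\mapsto V$ and $V\mapsto V-0$ are symmetric monoidal functors to $Var_\mathbb{C}$. What your approach buys: it makes explicit the connection with the classical Adams conjecture statement (invariance under $\psi^u$), which the paper's argument leaves implicit. But for the purposes of this lemma, the paper's two-line argument is strictly preferable to the program you outline.
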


Here $Aut(\mathbb{C})$ denotes the automorphism group of the abstract field $\mathbb{C}$.

\begin{proof}
Let $Var_\mathbb{C}$ denote the symmetric monoidal category of varieties over $\mathbb{C}$ under cartesian product, and let $h$ denote the symmetric monoidal functor $Var_\mathbb{C}\rightarrow Sp_{\widehat{\ell}}$ given by $X\mapsto (\Sigma^\infty_+X(\mathbb{C}))_{\widehat{\ell}}$.  Then \'{e}tale homotopy theory shows that $h$ is actually functorial for all maps of schemes, and is therefore invariant under the action of $Aut(\mathbb{C})$ on $Var_\mathbb{C}$.  But we can $Aut(\mathbb{C})$-equivariantly factor $(J_{\mathbb{C}})_{\widehat{\ell}}$ through $h$, up to colimits:  indeed, the one-point compactification of a $V\in Vect_\mathbb{C}^\sim$ identifies with the homotopy cofiber of $(V-0)\rightarrow V$; but both $V-0$ and $V$ canonically promote to varieties over $\mathbb{C}$, letting us interpret $(J_{\mathbb{C}})_{\widehat{\ell}}$ as the group completion of the $E_\infty$-map $Vect_\mathbb{C}^\sim\rightarrow Inv(Sp_{\widehat{\ell}})$ given by $V\mapsto cofib(h(V-0)\rightarrow h(V))$.
\end{proof}

\section{The $p$-adic J-homomorphism}\label{padicj}

Let $p$ be a prime.  In this section we introduce the $p$-adic J-homomorphisms, and calculate their effects on low homotopy groups.

The localization functors $Sp\rightarrow Sp_{\widehat{p}}$ and $Sp\rightarrow Sp[1/p]$ (\ref{loc}) induce on Picard spectra (\ref{pic}) a map
$$Pic(Sp)\rightarrow Pic(Sp_{\widehat{p}})\times Pic(Sp[1/p]).$$
This is an isomorphism on $\pi_i$ for $i\geq 2$, and hence an equivalence on $1$-connected covers.  Thus we may define our $p$-adic J-homomorphism $J_{\mathbb{Q}_p}\colon K(\mathbb{Q}_p)_{\geq 2}\rightarrow Pic(Sp)$ in two separate pieces: a tame piece $J_{\mathbb{Q}_p}^{tame}\colon K(\mathbb{Q}_p)_{\geq 2}\rightarrow Pic(Sp[1/p])$ and a wild piece $J_{\mathbb{Q}_p}^{wild}\colon K(\mathbb{Q}_p)_{\geq 2}\rightarrow Pic(Sp_{\widehat{p}})$.

Crucial to the definition of these pieces will be the ``localization'' fiber sequence
$$K(\mathbb{F}_p)\rightarrow K(\mathbb{Z}_p)\rightarrow K(\mathbb{Q}_p)$$
of \cite{Q} Section 5, which by rotation to $K(\mathbb{Z}_p)\rightarrow K(\mathbb{Q}_p)\overset{\partial}{\longrightarrow} \Sigma K(\mathbb{F}_p)$ provides a (partial) decomposition of $K(\mathbb{Q}_p)$ to match the above decomposition of $Pic(Sp)$.

\subsection{The tame $p$-adic J-homomorphism}\label{tame}

The tame piece is based on a certain map
$$J_{\mathbb{F}_p}\colon K(\mathbb{F}_p)\rightarrow (S[1/p])^\times$$
from the K-theory of $\mathbb{F}_p$ to the units of the $p$-inverted sphere, in the following way: $J_{\mathbb{Q}_p}^{tame}$ is the composition
$$K(\mathbb{Q}_p)\overset{\partial}{\longrightarrow} \Sigma K(\mathbb{F}_p)\overset{\Sigma J_{\mathbb{F}_p}}{\longrightarrow} \Sigma (S[1/p])^\times\rightarrow Pic(Sp[1/p]),$$
where the last map is the 0-connected cover.  (Thus $J_{\mathbb{Q}_p}^{tame}$ exists on the whole $K(\mathbb{Q}_p)$, not just on $K(\mathbb{Q}_p)_{\geq 2}$.)\\

In turn, $J_{\mathbb{F}_p}$ is defined as the group completion of the $E_\infty$-map $Vect_{\mathbb{F}_p}^\sim\rightarrow Aut(S[1/p])$ (direct sum going to smash product) given as the composition
$$Vect_{\mathbb{F}_p}^\sim\rightarrow Set_{p}^\sim\rightarrow Map_{p}(S,S)\rightarrow Aut(S[1/p]),$$
where:

\begin{enumerate}
\item The first map forgets down to the $E_\infty$-space of finite sets of $p$-power order under cartesian product;
\item The second map comes from the fact that a finite set canonically determines a stable self-map of the $0$-sphere of degree the cardinality of that set, with cartesian product of sets going to smash product of maps;
\item And the last map is induced by $p$-inversion, which makes all $p$-power-degree maps of spheres invertible.
\end{enumerate}

Here is a picture:

\begin{figure}[H]
\centering
\includegraphics[scale=.55]{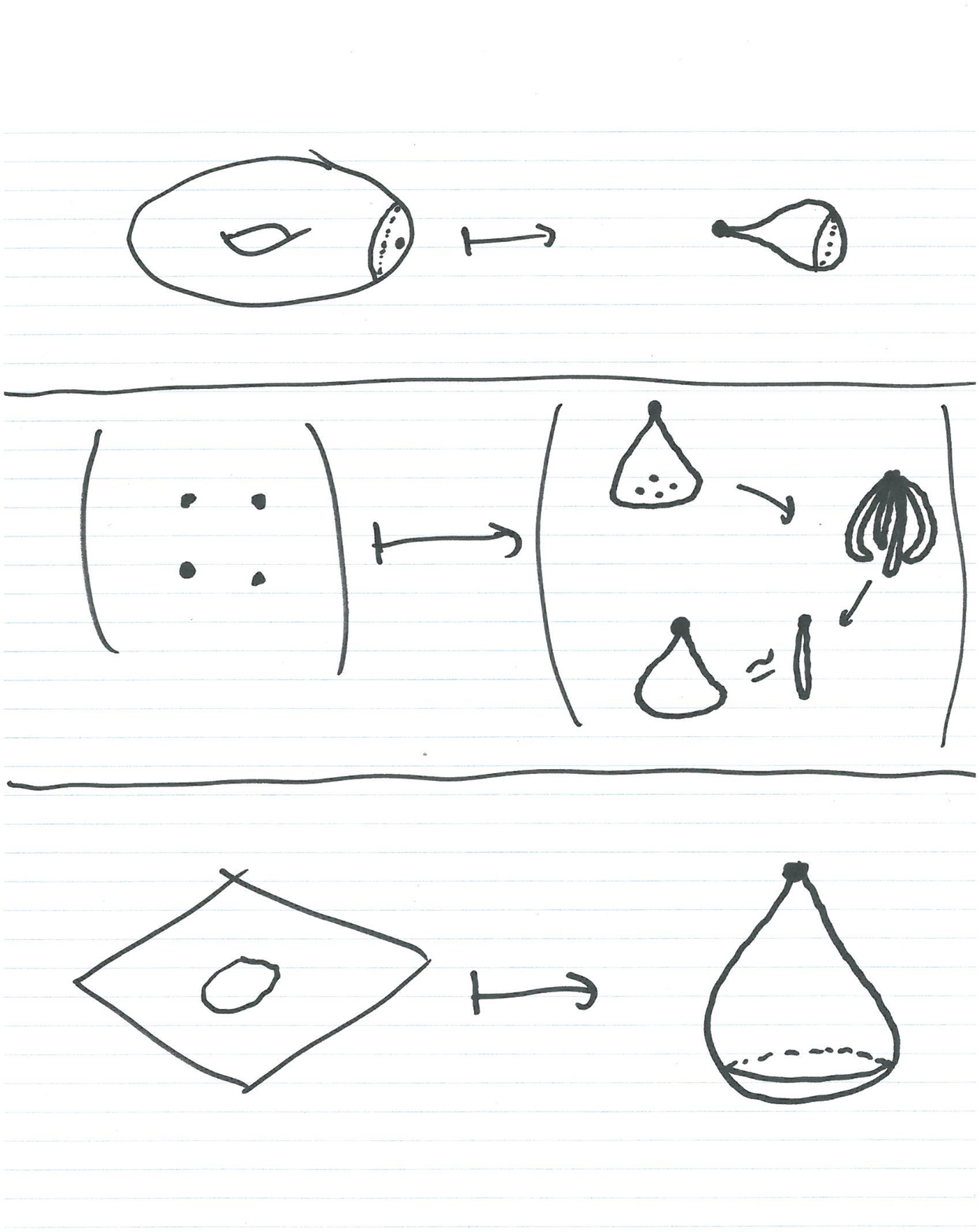}
\caption{$J_{\mathbb{F}_2}$ sends the $\mathbb{F}_2$-vector space on the left to the map of spheres on the right.}
\end{figure}

The second $E_\infty$-map $Set_p^\sim\rightarrow Map_p(S,S)$ above, though readily visualized using the Pontryagin-Thom construction (as in the preceding picture), is potentially opaque from a rigorous standpoint.  Thus let us indicate here how it can be formalized by means of the tensor product operation on presentable $\infty$-categories (\cite{ha} Section 6.3), via the discussion of sheaves of spectra in Appendix \ref{spheres}:

Given $F\in Set_p$, let $f\colon F\rightarrow *$ denote the projection to the point and $f^*\colon Sh(*)\rightarrow Sh(F)$ the resulting pullback functor for sheaves of spectra (\ref{orb2shf}).  Then our desired map $Set_p^\sim\rightarrow Map_p(S,S)$ can be interpreted as sending $F$ to the composition
$$S\rightarrow f_*f^*S\simeq f_\natural f^* S\rightarrow S,$$
where the first map is the unit for the $(f^*,f_*)$ adjunction, the last map is the counit for the $(f_\natural,f^*)$ adjunction, and the middle map comes from a canonical equivalence $f_\natural\simeq f_*$ (encoding the fact that $F$-indexed coproducts and products agree for spectra) which is produced as follows:  the diagonal $\Delta\colon F\rightarrow F\times F$ is a monomorphism, so its dualizing sheaf (\ref{dualdef}) is canonically trivial, giving a canonical equivalence $\Delta_*\simeq \Delta_\natural$ (\ref{dualprop}), which then canonically trivializes the dualizing sheaf of $F$ and hence gives the desired equivalence $f_\natural\simeq f_*$.

The point of this rigamarole is that now Lemma \ref{shprod} and Propostion \ref{cohprop} automatically give the required $E_\infty$-functoriality, since all of the adjoint functors used above preserve colimits.\\

\textit{Remark.} This $J_{\mathbb{F}_p}$ goes by the name ``discrete models map'' in the 1970's topology literature:  ``discrete'' because it is essentially recovered as the group completion of the purely combinatorial first map $Vect_{\mathbb{F}_p}^\sim\rightarrow Set_p^\sim$ in the above sequence; and ``model'' because, granting the (verified) Adams conjecture, it can be used to model the $\ell$-primary complex J-homomorphism whenever $p$ generates $\mathbb{Z}_\ell^\times$.  See Chapter XVIII of the book \cite{mqrt}, Snaith's article \cite{snaith}, and  \cite{ms} Example (iii).\\

We finish with a lemma concerning the values of $J_{\mathbb{Q}_p}^{tame}$ on low homotopy groups:

\begin{lemma}\label{lowtame}
The map $J_{\mathbb{Q}_p}^{tame}\colon K(\mathbb{Q}_p)\rightarrow Pic(Sp[1/p])$ has the following properties:
\begin{enumerate}
\item On $\pi_0$ it is zero;
\item On $\pi_1$ it identifies with the homomorphism $\mathbb{Q}_p^\times\rightarrow \mathbb{Z}[1/p]^\times$ given by $x\mapsto \|x\|_p^{-1}$.
\end{enumerate}
\end{lemma}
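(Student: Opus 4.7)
The plan is to read off both statements by decomposing $J_{\mathbb{Q}_p}^{tame}$ via its defining composite
$$K(\mathbb{Q}_p) \xrightarrow{\partial} \Sigma K(\mathbb{F}_p) \xrightarrow{\Sigma J_{\mathbb{F}_p}} \Sigma (S[1/p])^{\times} \to Pic(Sp[1/p])$$
and applying $\pi_n$ for $n=0,1$. Part (1) is immediate: the spectrum of units $(S[1/p])^{\times}$ is connective, so its suspension has vanishing $\pi_0$, and the whole composite is therefore zero on $\pi_0$.

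For part (2), I apply $\pi_1$ and use the fact that the last map — being the $0$-connected cover inclusion — is an isomorphism on $\pi_1$, together with the natural identification $\pi_1 Pic(Sp[1/p]) \cong \pi_0(S[1/p])^{\times} = \mathbb{Z}[1/p]^{\times}$ recorded in \ref{pic}. The composite then becomes
$$\mathbb{Q}_p^{\times} = K_1(\mathbb{Q}_p) \xrightarrow{\partial_*} K_0(\mathbb{F}_p) = \mathbb{Z} \xrightarrow{(J_{\mathbb{F}_p})_*} \mathbb{Z}[1/p]^{\times},$$
and it suffices to verify that $\partial_*$ is the $p$-adic valuation $v_p$ and that $(J_{\mathbb{F}_p})_*$ sends $n \mapsto p^n$; the full composite will then read $x \mapsto p^{v_p(x)} = \|x\|_p^{-1}$, as claimed.

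The first verification is Quillen's classical computation of the boundary in the localization sequence for the DVR $\mathbb{Z}_p$ (\cite{Q} Section 5): a uniformizer $p \in \mathbb{Q}_p^{\times}$ is sent to the class of the cokernel $\mathbb{Z}_p/p\mathbb{Z}_p = \mathbb{F}_p$, namely $1 \in K_0(\mathbb{F}_p)=\mathbb{Z}$, which pins down the map as $v_p$. For the second verification, I unwind the definition of $J_{\mathbb{F}_p}$: an $n$-dimensional $\mathbb{F}_p$-vector space has underlying set of cardinality $p^n$; the Pontryagin--Thom step sends a finite set $F$ to the self-map of $S$ obtained as $S \to f_*f^*S \simeq f_{\natural}f^*S \to S$, which on $\pi_0$ is multiplication by $|F|$ since it is the composite of the diagonal with the fold map; and $p$-inversion identifies the resulting degree-$p^n$ map with the invertible element $p^n \in \mathbb{Z}[1/p]^{\times}$. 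None of these steps presents a genuine obstacle; the only non-routine point is keeping track of conventions to confirm that the Quillen boundary is $+v_p$ rather than its negative — which the uniformizer computation above handles.
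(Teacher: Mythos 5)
Your proposal is correct and follows essentially the same route as the paper: identify $\partial$ on $\pi_1$ with $v_p$ via Quillen's localization sequence and $J_{\mathbb{F}_p}$ on $\pi_0$ with $k\mapsto p^k$ via cardinality, then compose. The only (immaterial) difference is in part (1), where you invoke connectivity of $(S[1/p])^{\times}$ while the paper notes that $\partial$ itself already vanishes on $\pi_0$; both observations are trivially valid.
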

\begin{proof}
By \cite{Q} Lemma 5.16, the boundary map $\partial\colon K(\mathbb{Q}_p)\rightarrow\Sigma K(\mathbb{F}_p)$ is already zero on $\pi_0$, and on $\pi_1$ induces the homomorphism $\mathbb{Q}_p^\times\rightarrow\mathbb{Z}$ given by $x\mapsto v_p(x)$; on the other hand $J_{\mathbb{F}_p}\colon K(\mathbb{F}_p)\rightarrow (S[1/p])^\times$ induces $k\mapsto p^k$ on $\pi_0$, since a $k$-dimensional vector space over $\mathbb{F}_p$ has cardinality $p^k$ and therefore induces a degree-$p^k$ self-map of $S$ via the above construction.
\end{proof}
\subsection{The wild $p$-adic J-homomorphism}\label{wild}

The wild piece is based on a certain map
$$J_{\mathbb{Z}_p}\colon K(\mathbb{Z}_p)\rightarrow Pic(Sp_{\widehat{p}})$$
from the K-theory of the $p$-adic integers to $Pic(Sp_{\widehat{p}})$, in the following way:  on the one hand, the map $K(\mathbb{Z}_p)_{\geq 2}\rightarrow K(\mathbb{Q}_p)_{\geq 2}$ is a $p$-local equivalence, since in the localization sequence $K(\mathbb{F}_p)\rightarrow K(\mathbb{Z}_p)\rightarrow K(\mathbb{Q}_p)$ the fiber $K(\mathbb{F}_p)$ is prime-to-$p$ in positive degrees by Quillen's calculation (\cite{quillenfinite}) of the groups $K_*(\mathbb{F}_p)$\footnote{The full calculation is not necessary here:  the material in the self-contained Section 11 of loc.\ cit.\ suffices.}; but on the other hand, the spectrum $Pic(Sp_{\widehat{p}})_{\geq 2}$ is $p$-local.  Thus the map $J_{\mathbb{Z}_p}$ restricted to $K(\mathbb{Z}_p)_{\geq 2}$ essentially uniquely extends to the desired map $J_{\mathbb{Q}_p}^{wild}\colon K(\mathbb{Q}_p)_{\geq 2}\rightarrow Pic(Sp_{\widehat{p}})$.\\

In turn, $J_{\mathbb{Z}_p}$ is defined to be the group completion of the $E_\infty$-map $Vect_{\mathbb{Z}_p}^\sim\rightarrow Inv(Sp_{\widehat{p}})$ (direct sum going to $p$-complete smash product) defined as follows:  given a finite free $\mathbb{Z}_p$-module $M$, we take its classifying space $BM$, viewing $M$ merely as a discrete group; this $BM$ is a sort of $p$-complete torus, and so it has a ``stable top cell'' which is a $p$-complete sphere, and we send $M$ to the inverse of this $p$-complete sphere viewed as an invertible object in $Sp_{\widehat{p}}$.  Here is a picture:

\begin{figure}[H]
\centering
\includegraphics[scale=.7]{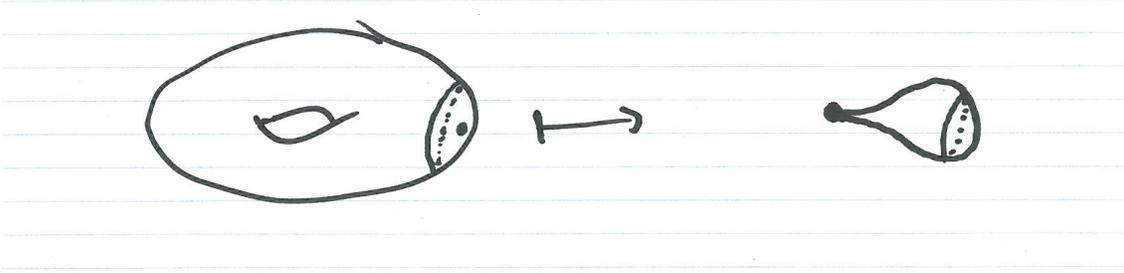}
\caption{$J_{\mathbb{Z}_p}$ sends $H_1(-;\mathbb{Z}_p)$ of the torus on the left to the inverse of the sphere on the right.  You have to imagine the torus and sphere $p$-completed.}
\end{figure}

We can formalize this using the notion of Poincar\'{e} duality $E_\infty$-algebra described in Appendix \ref{spheres}.  Indeed, we claim that for $M\in Vect_{\mathbb{Z}_p}^\sim$, the spectrum $C_*(BM):=(\Sigma^\infty_+BM)_{\widehat{p}}$ is dualizable in $Sp_{\widehat{p}}$, and its would-be-dual $C^*(BM)$ --- the spectrum of maps from $BM$ to the $p$-complete sphere --- is a Poincar\'{e} duality $E_\infty$-algebra in $Sp_{\widehat{p}}$ (Definition \ref{pdalg}), meaning $C_*(BM)$ is furthermore invertible as a module over $C^*(BM)$.  Indeed, if $L$ denotes a finite free $\mathbb{Z}$-module with $L\otimes_\mathbb{Z}\mathbb{Z}_p\simeq M$, then the map $\Sigma^\infty_+BL\rightarrow\Sigma^\infty_+BM$ is a (mod $p$) homology isomorphism between connective spectra and hence an equivalence after $p$-completion, so we are reduced to checking the analogous claims for $C_*(BL)$ and $C^*(BL)$.  But $BL$ is homotopy equivalent to a compact parallelizable manifold, namely a torus, so this follows from part 2 of Theorem \ref{unithm}.

Thus we can formally describe the $E_\infty$-map $Vect_{\mathbb{Z}_p}\rightarrow Inv(Sp_{\widehat{p}})$ whose group completion gives $J_{\mathbb{Z}_p}$ as sending $M$ to the inverse of the $p$-complete sphere
$$Sph^{alg}(C^*(BM)):=C_*(BM)\otimes_{C^*(BM)}S_{\widehat{p}}$$
of Theorem \ref{algsphere}, where the augmentation $C^*(BM)\rightarrow S_{\widehat{p}}$ comes from the canonical point of $BM$.\\

\textit{Remark.} This association of a $p$-complete sphere to a $p$-complete torus was originally considered by Bauer (\cite{bauer}), in the general context of $p$-compact groups.  Note, however, that our method for formalizing this association differs from Bauer's, for instance in that here the torus only needs to be pointed, whereas in \cite{bauer} its full group structure is used.\\

We finish by determining what $J_{\mathbb{Z}_p}$ does on low homotopy groups.

\begin{lemma}\label{lowwild}
The map $J_{\mathbb{Z}_p}\colon K(\mathbb{Z}_p)\rightarrow Pic(Sp_{\widehat{p}})$ has the following properties:
\begin{enumerate}
\item On $\pi_0$, it induces the map $\mathbb{Z}\rightarrow\mathbb{Z}$ given by $k\mapsto -k$;
\item On $\pi_1$, it induces the map $\mathbb{Z}_p^\times\rightarrow\mathbb{Z}_p^\times$ given by $x\mapsto x^{-1}$.
\end{enumerate}
\end{lemma}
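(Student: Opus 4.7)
The plan for both parts is to unwind the definition of $J_{\mathbb{Z}_p}$ by tracking generators of the K-theory groups through the construction $M \mapsto Sph^{alg}(C^*(BM))^{-1}$.

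For part (1), $\pi_0 K(\mathbb{Z}_p) = \mathbb{Z}$ is generated by the class of the unit module $\mathbb{Z}_p$. The argument already used in the definition of $J_{\mathbb{Z}_p}$ reduces $C_*(B\mathbb{Z}_p)$ to $(\Sigma^\infty_+ S^1)_{\widehat{p}}$ via comparison with $B\mathbb{Z} = S^1$; applying the Poincar\'{e} duality sphere construction (Theorems \ref{unithm} and \ref{algsphere}) to the compact parallelizable $1$-manifold $S^1$ identifies $Sph^{alg}(C^*(B\mathbb{Z}_p))$ with $S^1_{\widehat{p}}$. Inverting in $Pic(Sp_{\widehat{p}})$ lands in the $\pi_0$-component $-1$, so $\pi_0 J_{\mathbb{Z}_p}$ sends $1 \mapsto -1$, hence $k \mapsto -k$.

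For part (2), $\pi_1 K(\mathbb{Z}_p) = K_1(\mathbb{Z}_p) = \mathbb{Z}_p^\times$ (standard for a local ring), with $x \in \mathbb{Z}_p^\times$ represented by the automorphism $x\cdot : \mathbb{Z}_p \to \mathbb{Z}_p$ of the rank-one free module. I would track this $x$ through the construction: the induced self-map of $B\mathbb{Z}_p$ is multiplication by $x$ on $\pi_1$, so under the equivalence $C_*(B\mathbb{Z}_p) \simeq (\Sigma^\infty_+ S^1)_{\widehat{p}}$ of part (1) it acts by $x$ on $H_1$. By the functoriality of the Poincar\'{e} duality sphere construction supplied by Appendix \ref{spheres}, this induces a self-equivalence of $S^1_{\widehat{p}}$ representing the class $x \in (\pi_0 S_{\widehat{p}})^\times = \pi_1 Pic(Sp_{\widehat{p}})$. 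Since $J_{\mathbb{Z}_p}$ takes the \emph{inverse} in $Pic$, and additive negation on the Picard spectrum corresponds to multiplicative inversion on $\pi_1$ (the units of $\pi_0$ of endomorphisms of the unit), the final output is $x^{-1}$.

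The main subtlety is the last identification: confirming that passing to $L^{-1}$ in $Pic(Sp_{\widehat{p}})$ carries a self-equivalence $\alpha$ of an invertible $L$ to the multiplicative inverse $\alpha^{-1}$ in $(\pi_0 S_{\widehat{p}})^\times$. This is a general statement about any Picard spectrum, provable by considering the evaluation pairing $L \otimes L^{-1} \simeq \mathbf{1}$ together with the compatible action of automorphisms on the two factors; once this is extracted, the computation of both parts above is essentially a bookkeeping exercise, with the $\pi_0$ case serving as a sanity check on the conventions.
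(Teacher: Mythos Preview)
Your argument is correct and follows essentially the same route as the paper: reduce to the circle via the mod-$p$ equivalence $B\mathbb{Z}\to B\mathbb{Z}_p$, then read off the shift and the degree. The paper organizes this slightly more economically by computing directly in $p$-adic homology of $X=C_*(B\mathbb{Z}_p)\otimes_{C^*(B\mathbb{Z}_p)}S_{\widehat{p}}$, observing that $H_k(X;\mathbb{Z}_p)=0$ for $k\neq 1$ and that $C_*(B\mathbb{Z}_p)\to X$ is an isomorphism on $H_1$, so the $\mathbb{Z}_p^\times$-action is by scalars; this avoids invoking Theorem~\ref{unithm} and makes the passage from $H_1(C_*(B\mathbb{Z}_p))$ to the degree on the resulting sphere explicit (a step you leave implicit). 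Your use of the general fact that inversion in the Picard spectrum acts as multiplicative inversion on $\pi_1$ is equivalent to the paper's handling of the same sign via the inverse in the definition of $J_{\mathbb{Z}_p}$.
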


\begin{proof}
For a $p$-complete spectrum $X$ and an integer $k$, we denote by $H_k(X;\mathbb{Z}_p)$ the $\mathbb{Z}_p$-module $\pi_k(X\wedge H\mathbb{Z}_p)$, using the $p$-complete smash product.  Then, since $\pi_0 Pic(Sp_{\widehat{p}})\simeq\mathbb{Z}$ just says how shifted a sphere is and $\pi_1 Pic(Sp_{\widehat{p}})\simeq\mathbb{Z}_p^\times$ just says what degree a self-equivalence of a sphere has, both claims can be checked on the level of this $p$-adic homology, and more specifically it suffices to show that the $p$-complete spectrum $X:=J_{\mathbb{Z}_p}([\mathbb{Z}_p])^{-1}=C_*(B\mathbb{Z}_p)\otimes_{C^*(B\mathbb{Z}_p)}S_{\widehat{p}}$ satisfies:
\begin{enumerate}
\item $H_k(X;\mathbb{Z}_p)=0$ for $k\neq 1$;
\item The action of $\mathbb{Z}_p^\times$ on $B\mathbb{Z}_p$ induces the action of $\mathbb{Z}_p^\times$ by scalars on $H_1(X;\mathbb{Z}_p)$.
\end{enumerate}
However, point 1 is clear by comparison with $B\mathbb{Z}=S^1$, and for point 2 we note that the natural map $C_*(B\mathbb{Z}_p)\rightarrow X$ is an isomorphism on $H_1(-;\mathbb{Z}_p)$, again by comparison with $B\mathbb{Z}$.
\end{proof}

\section{The image of J}\label{imjsect}
In this section, predominately calculational, we prove Theorem \ref{imj} (concerning the image of the $J_{\mathbb{Q}_p}$ on homotopy groups).  The elements of $\pi_*^S$ appearing in Theorem \ref{imj} are detected at the first chromatic level, so we start by recalling some $K(1)$-local preliminaries.

For the rest of this section $\ell$ will denote a fixed odd prime, and our $K(1)$-localizations (\ref{k1}) will be implicitly taken at $\ell$.  The reason for assuming $\ell$ odd is that in this case the group of $\ell$-adic units $\mathbb{Z}_\ell^\times$ is procyclic, which, as we will remind, implies a simplified description of $L_{K(1)}S$.

\subsection{$\ell$-completed complex K-theory}

The governing object of $L_{K(1)}Sp$ is the $\ell$-completed complex K-theory spectrum $K_{\widehat{\ell}}\in L_{K(1)}Sp$.  Recall that $K_{\widehat{\ell}}$ is canonically an $E_\infty$-algebra in $L_{K(1)}Sp$, and moreover carries, for each $u\in\mathbb{Z}_\ell^\times$, a unique $E_\infty$-algebra automorphism $\psi^u$ (the $u^{th}$-power Adams operation) inducing multiplication by $u$ on $\pi_2(K_{\widehat{\ell}})$ (see \cite{gh} Cor.\ 7.7).  Here is a picture:

\begin{figure}[H]
\centering
\includegraphics[scale=.55]{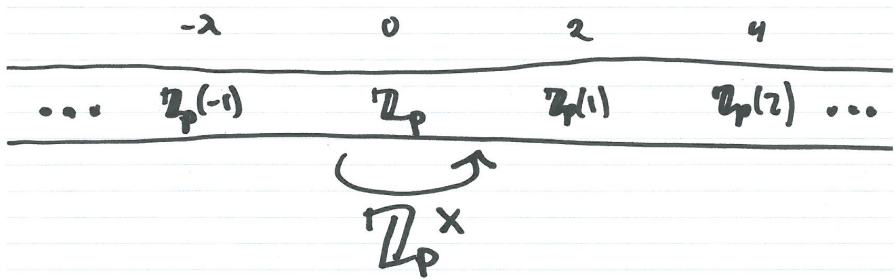}
\caption{A picture of $K_{\widehat{p}}$, or at least of its homotopy groups.  Sorry for drawing $p$ instead of $\ell$.}
\end{figure}

For our purposes, we will produce this structure on $K_{\widehat{\ell}}$ as follows.  By Suslin's theorem (\cite{suslin} Cor.\ 4.7), the natural map $K(\mathbb{C})\rightarrow K^{top}(\mathbb{C})$ is an equivalence on $\ell$-completion; on the other hand, if $\beta\colon K^{top}(\mathbb{C})\rightarrow \Sigma^{-2}K^{top}(\mathbb{C})$ denotes the Bott map, then the $\beta$-inversion $K^{top}(\mathbb{C})\rightarrow K^{top}(\mathbb{C})[\beta^{-1}]\simeq K$ realizes $K^{top}(\mathbb{C})$ as the connective cover of $K$.  Thus in total we have $L_{K(1)}K(\mathbb{C})\simeq K_{\widehat{\ell}}$, and it suffices to equip $K(\mathbb{C})$ with a canonical $E_\infty$-ring structure admitting an automorphism that induces $\zeta\mapsto\zeta^u$ on all $\ell$-power-torsion elements $\zeta\in \pi_1K(\mathbb{C})\simeq\mathbb{C}^\times$ (since $\pi_2K(\mathbb{C})_{\widehat{\ell}}\simeq Hom(\mathbb{Z}/\ell^\infty,\pi_1K(\mathbb{C}))$, c.f.\ \ref{compl}).

However, the symmetric monoidal tensor product on $Vect_\mathbb{C}$ induces the desired $E_\infty$-ring structure on $K(\mathbb{C})$, and we can then produce an automorphism of $K(\mathbb{C})$ of the desired sort by choosing an automorphism of the discrete field $\mathbb{C}$ which induces $\zeta\mapsto\zeta^{u^{-1}}$ on any $\ell$-power root of unity $\zeta\in\mathbb{C}^\times$.

\subsection{The $K(1)$-local sphere}\label{k1sph}

Fix a generator $u\in\mathbb{Z}_\ell^\times$.  Then the main calculation is that the unit map $L_{K(1)}S\rightarrow K_{\widehat{\ell}}$ induces a fiber sequence of spectra
$$L_{K(1)}S\rightarrow K_{\widehat{\ell}}\overset{\psi^u-1}{\longrightarrow}K_{\widehat{\ell}}.$$
(In canonical terms we have an equivalence of $L_{K(1)}S$ with the homotopy fixed points of a profinite action of $\mathbb{Z}_\ell^\times$ on $K_{\widehat{\ell}}$ by Adams operations --- see \cite{dh} and \cite{bd} --- but given a generator $u\in\mathbb{Z}_\ell^\times$ this simplifies to the above statement, which is anyway classical, c.f.\ \cite{bous} Section 4.)

This fiber sequence permits a detailed analysis of $L_{K(1)}S$; for instance it allows to calculate the homotopy groups of $L_{K(1)}S$:
\begin{enumerate}
\item $\pi_0L_{K(1)}S$ identifies with $\mathbb{Z}_\ell$.
\item $\pi_{2k-1}L_{K(1)}S$ identifies with $\mathbb{Z}_\ell/(u^k-1)\mathbb{Z}_\ell$ --- canonically, $H^1(\mathbb{Z}_\ell^\times,\mathbb{Z}_\ell(k))$ --- for all $k\in\mathbb{Z}$;
\item $\pi_n L_{K(1)}S=0$ for all other $n$ (i.e.\ nonzero even integers $n$).
\end{enumerate}

Note that $\mathbb{Z}_\ell/(u^k-1)\mathbb{Z}_\ell$ is finite except when $k=0$, where we find that $\pi_{-1}L_{K(1)}S$ identifies with $\mathbb{Z}_\ell$ (canonically, $Hom(\mathbb{Z}_\ell^\times,\mathbb{Z}_\ell)$).

\subsection{The $K(1)$-local logarithm}

We recall the following notion of logarithm for $K(1)$-local $E_\infty$-ring spectra, studied by Rezk in \cite{log}.  Let $A$ be an $E_\infty$-algebra in $L_{K(1)}Sp$, and let $A^\times$ denote the spectrum of units of $A$.  Then translating the unit $1\in\Omega^\infty A$ to zero $0\in\Omega^\infty A$ supplies an equivalence of pointed spaces $\Omega^\infty\Sigma^{-1}A^\times\simeq\Omega^\infty\Sigma^{-1}A$, so the Bousfield-Kuhn functor (\ref{k1}) gives $L_{K(1)}A^\times\simeq L_{K(1)}A\simeq A$, whence a natural map of spectra
$$log_A\colon A^\times\rightarrow A,$$
functorial for maps of $E_\infty$-algebras in $L_{K(1)}Sp$.  Rezk gives a formula for the evaluation of $log_A$ on any finite complex $X$ in terms of $\theta$-operations (\cite{log} Thm.\ 1.9); when $A=K_{\widehat{\ell}}$ and $X=*$, we find:
\begin{proposition}\label{rzk}
The map $log_{K_{\widehat{\ell}}}\colon K_{\widehat{\ell}}^\times\rightarrow K_{\widehat{\ell}}$ is surjective on $\pi_0$.
\end{proposition}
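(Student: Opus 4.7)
The plan is to invoke Rezk's explicit formula for the $K(1)$-local logarithm (Theorem 1.9 of \cite{log}), specialized to $A = K_{\widehat{\ell}}$ and the trivial space $X = *$, and then read off surjectivity on $\pi_0$ by an elementary calculation in $\mathbb{Z}_\ell$. Rezk's formula expresses $\log_A$ in terms of the Adams operation $\psi^\ell$ on $K_{\widehat{\ell}}$ and the formal $\ell$-adic logarithm series; in this specialization I expect it to read
$$\log_{K_{\widehat{\ell}}}(a) \;=\; \frac{1}{\ell}\,\log_\ell\!\left(\frac{a^\ell}{\psi^\ell(a)}\right)$$
for $a \in \mathbb{Z}_\ell^\times = \pi_0 K_{\widehat{\ell}}^\times$, where $\log_\ell$ denotes the usual (formal) $\ell$-adic logarithm. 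The division by $\ell$ is legitimate because Rezk's $\theta$-operation $\theta(a) = (\psi^\ell(a) - a^\ell)/\ell$ is integral, so $a^\ell/\psi^\ell(a)$ always lies in $1 + \ell\mathbb{Z}_\ell$.

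The decisive simplification is that $\psi^\ell$ acts as the identity on $\pi_0 K_{\widehat{\ell}} = \mathbb{Z}_\ell$: any ring endomorphism of $\mathbb{Z}_\ell$ is forced to be the identity. So on $\pi_0$ the formula collapses to
$$\log_{K_{\widehat{\ell}}}(a) \;=\; \tfrac{1}{\ell}\log_\ell(a^{\ell-1}) \;=\; \tfrac{\ell-1}{\ell}\,\log_\ell(a).$$
To finish, recall that for odd $\ell$ the classical $\ell$-adic logarithm $\log_\ell\colon \mathbb{Z}_\ell^\times \to \mathbb{Z}_\ell$ kills the Teichm\"uller subgroup $\mu_{\ell-1}$ and restricts to an isomorphism $1 + \ell\mathbb{Z}_\ell \xrightarrow{\sim} \ell\mathbb{Z}_\ell$; so its image is precisely $\ell\mathbb{Z}_\ell$. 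Consequently, the image of $\pi_0\log_{K_{\widehat{\ell}}}$ equals $\tfrac{\ell-1}{\ell}\cdot \ell\mathbb{Z}_\ell = (\ell-1)\mathbb{Z}_\ell$, which is all of $\mathbb{Z}_\ell$ because $\ell - 1$ is coprime to $\ell$ and hence a unit in $\mathbb{Z}_\ell$.

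The only substantive step I foresee is unwinding Rezk's conventions to confirm the above formula in the specialization to $X = *$; once that is pinned down, surjectivity on $\pi_0$ reduces to the elementary identity $\gcd(\ell-1, \ell) = 1$. It is worth noting that the proof depends crucially on $\ell$ being odd --- both to ensure procyclicity of $\mathbb{Z}_\ell^\times$ (used in the statement of Rezk's formula via the choice of $\psi^\ell$) and to ensure that $\log_\ell$ has image exactly $\ell\mathbb{Z}_\ell$ rather than the smaller $4\mathbb{Z}_2$ that would appear at the prime $2$.
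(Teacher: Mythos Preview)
Your proposal is correct and follows essentially the same approach as the paper: both invoke Rezk's formula to obtain the explicit expression $x\mapsto \frac{1}{\ell}\log(x^{\ell-1})$ on $\pi_0$. The paper's proof is a single line stating this formula and leaving surjectivity implicit, whereas you spell out the intermediate step $\psi^\ell = \mathrm{id}$ on $\pi_0$ and the elementary $\ell$-adic argument that $(\ell-1)\mathbb{Z}_\ell = \mathbb{Z}_\ell$; this added detail is fine and does not constitute a different route.
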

\begin{proof} Indeed, Rezk's formula shows that $\pi_0(log_{K_{\widehat{\ell}}})\colon \mathbb{Z}_\ell^\times\rightarrow\mathbb{Z}_\ell$ is given by $x\mapsto \frac{1}{\ell}log(x^{\ell-1})$.\end{proof}

\subsection{A $K(1)$-local analysis of $Pic(Sp)$}

Now we can understand $Pic(Sp)$ well enough in order to prove Theorem \ref{imj}.  Here is the key proposition:

\begin{proposition}\label{k1analysis}
There is a map $log\colon Pic(L_{K(1)}Sp)\rightarrow \Sigma L_{K(1)}S$ with the following properties:
\begin{enumerate}
\item $\pi_1 log$ sends any generator of $\pi_1Pic(L_{K(1)}Sp)\simeq\mathbb{Z}_\ell^\times$ to a generator of $\pi_0L_{K(1)}S\simeq \mathbb{Z}_\ell$;
\item $\pi_0 log$ sends the class of the $2$-sphere to a generator of $\pi_{-1}L_{K(1)}S\simeq Hom(\mathbb{Z}_\ell^\times,\mathbb{Z}_\ell)$.
\end{enumerate}
\end{proposition}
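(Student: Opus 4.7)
The plan is to apply Rezk's $K(1)$-local logarithm to the $E_\infty$-ring $A = L_{K(1)}S$ itself, obtaining a map of spectra
$$log_{L_{K(1)}S}\colon (L_{K(1)}S)^\times \longrightarrow L_{K(1)}S.$$
Using the identification $(L_{K(1)}S)^\times \simeq (\Sigma^{-1}Pic(L_{K(1)}Sp))_{\geq 0}$ from \ref{pic} and suspending, this becomes a map
$$Pic(L_{K(1)}Sp)_{\geq 1} \longrightarrow \Sigma L_{K(1)}S.$$
The desired $log$ will be an extension of this to all of $Pic(L_{K(1)}Sp)$ along the cofiber sequence $Pic(L_{K(1)}Sp)_{\geq 1} \to Pic(L_{K(1)}Sp) \to H\pi_0 Pic(L_{K(1)}Sp)$. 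The set of extensions forms, when nonempty, a torsor over $Hom(\pi_0 Pic(L_{K(1)}Sp), \pi_{-1}L_{K(1)}S) = Hom(\pi_0 Pic(L_{K(1)}Sp), \mathbb{Z}_\ell)$, so we may choose one sending $[S^2]$ to a generator of the target, immediately giving property (2).

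For property (1), the map on $\pi_1$ coincides with $\pi_0(log_{L_{K(1)}S})$. By naturality of Rezk's construction with respect to the $E_\infty$-unit $L_{K(1)}S \to K_{\widehat{\ell}}$ and by Proposition \ref{rzk}, this equals $x \mapsto \tfrac{1}{\ell}\log(x^{\ell-1})$. A topological generator $u$ of $\mathbb{Z}_\ell^\times$ has $u^{\ell-1} = 1 + \ell v$ with $v \in \mathbb{Z}_\ell^\times$ (since $u^{\ell-1}$ topologically generates the pro-$\ell$ subgroup $1+\ell\mathbb{Z}_\ell$), and the $\ell$-adic series expansion yields $\log(u^{\ell-1}) = \ell v'$ for a unit $v' \in \mathbb{Z}_\ell^\times$; hence $\tfrac{1}{\ell}\log(u^{\ell-1}) = v'$ is a generator of $\pi_0 L_{K(1)}S \simeq \mathbb{Z}_\ell$.

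The main obstacle is showing the extension actually exists, i.e., that the obstruction class in $[H\pi_0 Pic(L_{K(1)}Sp), \Sigma^2 L_{K(1)}S]$ vanishes. Since $\pi_{-2}L_{K(1)}S = 0$, a universal coefficient argument reduces this to $Ext^1_{\mathbb{Z}_\ell}$ of the $\ell$-completion of $\pi_0 Pic(L_{K(1)}Sp)$ against $\mathbb{Z}_\ell$; by Hopkins-Mahowald-Sadofsky (\cite{hms}), this $\ell$-completion at odd $\ell$ is a free $\mathbb{Z}_\ell$-module (the torsion summand there has order prime to $\ell$), so the obstruction vanishes. Alternatively, one might construct $log$ on all of $Pic(L_{K(1)}Sp)$ intrinsically by exploiting the Adams action on $P \otimes_{L_{K(1)}S} K_{\widehat{\ell}}$ via the fiber sequence recalled in \ref{k1sph}, thereby bypassing the extension obstruction and yielding property (2) directly from the action of $\psi^u$ on $\pi_{-2} K_{\widehat{\ell}}$.
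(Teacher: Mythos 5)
Your construction of the map and your verification of property (1) are essentially correct and agree with the paper's route: the paper obtains the extension canonically from the fact that $(L_{K(1)}S)^\times\rightarrow\Sigma^{-1}Pic(L_{K(1)}Sp)$ is a connective cover, hence an $L_{K(1)}$-equivalence, so that $log_{L_{K(1)}S}$ --- which is defined through the Bousfield--Kuhn functor --- factors through it; and property (1) then reduces, exactly as you say, to Rezk's formula $x\mapsto\frac{1}{\ell}\log(x^{\ell-1})$ via Proposition \ref{rzk}. The problem is the step establishing property (2). Write $A=\pi_0Pic(L_{K(1)}Sp)$. The Eilenberg--MacLane spectrum $HA$ is $K(1)$-acyclic (Eilenberg--MacLane spectra are $K(n)$-acyclic for $n\geq 1$), while $\Sigma L_{K(1)}S$ is $K(1)$-local, so $[HA,\Sigma^kL_{K(1)}S]=0$ for every $k$. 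This gives you existence of the extension for free (your $Ext^1$/universal-coefficient computation of $[HA,\Sigma^2L_{K(1)}S]$ is not valid as stated --- maps out of an Eilenberg--MacLane spectrum are not controlled by a two-term universal coefficient sequence in $\pi_0$ and $\pi_{-1}$ of the target --- but the conclusion happens to be right). However, the same vanishing shows the extension is \emph{unique} up to homotopy: the set of extensions is a singleton, not a torsor over $Hom(A,\mathbb{Z}_\ell)$. So you cannot ``choose'' the extension to send $[S^2]$ to a generator, and property (2) becomes a genuine computation about the one map you have, which your main argument does not perform.

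That computation is the actual content of the proposition, and it is what the paper supplies: by functoriality of Rezk's logarithm one gets a map of null-composite sequences from $Pic(L_{K(1)}Sp)\rightarrow Pic(K_{\widehat{\ell}})\overset{\psi^u-1}{\longrightarrow}Pic(K_{\widehat{\ell}})$ to the suspension of the fiber sequence $L_{K(1)}S\rightarrow K_{\widehat{\ell}}\overset{\psi^u-1}{\longrightarrow}K_{\widehat{\ell}}$ of \ref{k1sph}; Bott periodicity identifies $S^2\wedge K_{\widehat{\ell}}\simeq K_{\widehat{\ell}}$ in a way that twists $\psi^u$ by $u$, so the image of $[S^2]$ in the fiber of $\psi^u-1$ on $Pic(K_{\widehat{\ell}})$ lifts along the boundary map to $u\in\pi_1Pic(K_{\widehat{\ell}})\simeq\mathbb{Z}_\ell^\times$, and Rezk's formula then sends this to a generator of $\mathbb{Z}_\ell\simeq\pi_{-1}L_{K(1)}S$. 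Your closing sentence gestures at exactly this mechanism, but it is left as an undeveloped alternative while the route you actually pursue cannot establish (2).
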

In fact, we will see that $\pi_1log$ sends $exp(\frac{\ell}{\ell-1})\in\mathbb{Z}_\ell^\times$ to $1\in\mathbb{Z}_\ell$, and $\pi_0log$ sends the $2$-sphere to the generator $x\mapsto \frac{1}{\ell}log(x^{\ell-1})$ of $Hom(\mathbb{Z}_\ell^\times,\mathbb{Z}_\ell)$.  

\begin{proof}
Denote by $Pic(K_{\widehat{\ell}})$ the Picard spectrum (\ref{pic}) of the symmetric monoidal $\infty$-category $Mod_{K_{\widehat{\ell}}}$ of $K_{\widehat{\ell}}$-modules in $L_{K(1)}Sp$.  Then the natural map $K_{\widehat{\ell}}^\times\rightarrow \Sigma^{-1}Pic(K_{\widehat{\ell}})$ is a connective cover and hence an equivalence on $L_{K(1)}$, so $log_{K_{\widehat{\ell}}}$ naturally extends to a map $Pic(K_{\widehat{\ell}})\rightarrow \Sigma K_{\widehat{\ell}}$; and by the same token, $log_{L_{K(1)}S}$ naturally extends to a map $Pic(L_{K(1)}Sp)\rightarrow \Sigma L_{K(1)}S$, which we take as our $log$.

Now, fix a generator $u\in\mathbb{Z}_\ell^\times$.  Then the functoriality of $log_A$ shows that the above maps are compatible with the unit $L_{K(1)}Sp\rightarrow Mod_{K_{\widehat{\ell}}}$ as well as the action of $\psi^u$ on $Mod_{K_{\widehat{\ell}}}$, and hence canonically extend to a map of null-composite sequences
$$\xymatrix{ Pic(L_{K(1)}Sp)\ar[r]\ar[d]^-{log} & Pic(K_{\widehat{\ell}})\ar[r]^-{\psi^u-1}\ar[d] & Pic(K_{\widehat{\ell}})\ar[d] \\
\Sigma L_{K(1)}S\ar[r] & \Sigma K_{\widehat{\ell}}\ar[r]^-{\psi^u-1} & \Sigma K_{\widehat{\ell}},}$$
where the bottom map is the suspension of the fiber sequence of \ref{k1sph}.  (The top map is also a fiber sequence, modulo $\pi_{-1}$-issues --- compare \cite{hms} Prop.\ 2.1 --- but we don't need this.)

Now, the horizontal maps in the left-hand square are isomorphisms on $\pi_1$, so the claim about $\pi_1(log)$ reduces to Proposition \ref{rzk}.  As for the claim about $\pi_0(log)$, note that Bott periodicity provides an equivalence $S^2\wedge K_{\widehat{\ell}}\simeq K_{\widehat{\ell}}$ of invertible $K_{\widehat{\ell}}$-modules under which the Adams operation $\psi^u$ gets multiplied by $u$; thus the image of $[S^2]\in \pi_0Pic(L_{K(1)}Sp)$ in the homotopy fiber of $Pic(K_{\widehat{\ell}})\overset{\psi^u-1}{\longrightarrow} Pic(K_{\widehat{\ell}})$ lifts along the boundary map to the class $u\in \pi_1Pic(K_{\widehat{\ell}})\simeq\mathbb{Z}_{\ell}^\times$; then by functoriality of the boundary map we are again reduced to Proposition \ref{rzk}.
\end{proof}

Though the above proposition is all we will need from $log$, it might be helpful to keep in mind that a stronger statement is true: in fact $log$ identifies $Pic(L_{K(1)}Sp)_{\widehat{\ell}}$ with the connective cover of $\Sigma L_{K(1)}S$.  This will fall out of our analysis of the image of J; or even better, \cite{log} Thm.\ 1.9 implies the more precise claim that $\pi_{2k}(log)$ induces an isomorphism $\pi_{2k}Pic(L_{K(1)}Sp)\simeq\pi_{2k-1}L_{K(1)}S$ (for $k>0$) which differs from the more prosaic one (\ref{pic}) by a factor of $1-\ell^{k-1}$.

\subsection{The image of J}

Now we turn to our primary consideration.  Recall that $\ell$ is a fixed odd prime.

\begin{definition}
Let $n$ be a positive integer.  We define a bunch of subgroups of $\pi_nS_{\widehat{\ell}}\simeq\pi_{n+1}Pic(Sp_{\widehat{\ell}})$ (\ref{pic}) as follows:
\begin{enumerate}
\item First, we let $Im(J)_n\subseteq \pi_nS_{\widehat{\ell}}$ denote the image on $\pi_{n+1}$ of any map $\Sigma S_{\widehat{\ell}}\rightarrow Pic(Sp_{\widehat{\ell}})_{\widehat{\ell}}$ classifying a generator of the free $\mathbb{Z}_\ell$-module of rank one $\pi_1Pic(Sp_{\widehat{\ell}})_{\widehat{\ell}}\simeq(\mathbb{Z}_\ell^\times)_{(\ell)}$;
\item And second, for $p\leq\infty$ we let $Im(J_{\mathbb{Q}_p})_n\subseteq\pi_nS_{\widehat{\ell}}$ denote the image on $\pi_{n+1}$ of the $\ell$-completion of $J_{\mathbb{Q}_p}\colon K(\mathbb{Q}_p)_{\geq 2}\rightarrow Pic(Sp_{\widehat{\ell}})$.\end{enumerate}
\end{definition}

Now let us restate and prove Theorem \ref{imj}.

\begin{theorem}\label{trueimj}
Let $n>0$.  Then:
\begin{enumerate}
\item The subgroup $Im(J)_n$ of $\pi_nS_{\widehat{\ell}}$ maps isomorphically to $\pi_nL_{K(1)}S$;
\item For each $p\leq\infty$ we have $Im(J_{\mathbb{Q}_p})_n=Im(J)_n$, except possibly when $p=\ell$ and $n$ is even (in which case $Im(J)_n=0$, but no clue about $Im(J_{\mathbb{Q}_\ell})_n$).
\end{enumerate}
\end{theorem}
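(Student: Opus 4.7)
The plan is to use Proposition \ref{k1analysis}'s logarithm $\log\colon Pic(L_{K(1)}Sp)\to \Sigma L_{K(1)}S$ as the central tool. The two parts are intertwined: one identifies the neutral candidate $Im(J)_n$ with each $Im(J_{\mathbb{Q}_p})_n$ and simultaneously exhibits it as isomorphic to $\pi_nL_{K(1)}S$, by showing that the $\log$-image of every candidate equals the same subgroup of $\pi_nL_{K(1)}S$.

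The core $\log$-computation, underlying Part (1), goes as follows. Composing $g\colon \Sigma S_{\widehat{\ell}}\to Pic(Sp_{\widehat{\ell}})_{\widehat{\ell}}$ with the localization $Pic(Sp_{\widehat{\ell}})\to Pic(L_{K(1)}Sp)$ and with $\log$ produces a map $h\colon \Sigma S_{\widehat{\ell}}\to \Sigma L_{K(1)}S$. Since the target is $K(1)$-local, $h$ factors through the localization $\Sigma S_{\widehat{\ell}}\xrightarrow{L}\Sigma L_{K(1)}S\xrightarrow{\phi}\Sigma L_{K(1)}S$. By Proposition \ref{k1analysis}(1), $\pi_1\phi$ is multiplication by an $\ell$-adic unit on $\pi_0L_{K(1)}S$, and since $\Sigma L_{K(1)}S$ is invertible over $L_{K(1)}S$ this forces $\phi$ to be an equivalence. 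Tautology of $\log$ on $\pi_{\geq 2}Pic(L_{K(1)}Sp)\simeq \pi_{\geq 1}L_{K(1)}S$ then yields the identity $L_\ast\circ g_\ast = \phi_\ast\circ L_\ast$ on $\pi_n$, so $L_\ast(Im(J)_n) = L_\ast(\pi_nS_{\widehat{\ell}})$ inside $\pi_nL_{K(1)}S$; this is a universal upper bound on the $\log$-images.

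I would then turn to Part (2) to provide the matching lower bound for each prime $p$. For $p = \infty$: Lemma \ref{discreteadams} and $L_{K(1)}K(\mathbb{C})\simeq K_{\widehat{\ell}}$ let us lift $\log\circ J_\mathbb{R}$ through the fiber sequence $L_{K(1)}S\to K_{\widehat{\ell}}\xrightarrow{\psi^u-1} K_{\widehat{\ell}}$ of \ref{k1sph}, with $\pi_1$ a generator by Lemma \ref{lowreal}. For $p\neq \ell$ prime: only the tame piece contributes after $\ell$-completion, and the discrete-models description of $J_{\mathbb{F}_p}$ together with Lemma \ref{lowtame} identify $\log\circ J_{\mathbb{Q}_p}^{tame}$ in terms of the Adams operation $\psi^p$, again producing a generator. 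For $p = \ell$ and $n$ odd: only the wild piece contributes, and Lemma \ref{lowwild} together with the stable-top-cell interpretation of $J_{\mathbb{Z}_\ell}$ gives the $\pi_1$-computation. In each case the explicit description of $\pi_\ast L_{K(1)}S$ from \ref{k1sph} pins down the image as all of $\pi_nL_{K(1)}S$. Combined with the universal upper bound from the previous paragraph, this forces both $Im(J_{\mathbb{Q}_p})_n = Im(J)_n$ and $Im(J)_n\simeq \pi_nL_{K(1)}S$.

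The main obstacle is the $p = \ell$ case at odd $n$: unlike the tame and real situations, the wild piece $J_{\mathbb{Z}_\ell}^{wild}$ has no natural $K(1)$-local description via Adams operations, so its contribution must be tracked directly through the $p$-complete torus construction of Section \ref{wild}. This limitation also explains why the theorem excludes $n$ even at $p = \ell$: without an Adams-operation-style constraint, the wild piece could in principle produce higher chromatic classes that survive in even degrees, where the vanishing $\pi_nL_{K(1)}S = 0$ cannot serve as a cap.
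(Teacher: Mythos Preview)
Your argument has a genuine logical gap in how it passes from $K(1)$-local information back to $\pi_nS_{\widehat{\ell}}$. All you establish is that the localization map $L_*\colon \pi_nS_{\widehat{\ell}}\to\pi_nL_{K(1)}S$ carries each $Im(J_{\mathbb{Q}_p})_n$ and $Im(J)_n$ onto the same subgroup (namely all of $\pi_nL_{K(1)}S$). But $L_*$ is not injective --- there are higher-chromatic classes in $\pi_nS_{\widehat{\ell}}$ killed by $K(1)$-localization --- so two subgroups with the same $L_*$-image need not coincide, and a subgroup surjecting under $L_*$ need not inject. Your ``universal upper bound on the $\log$-images'' is a bound in $\pi_nL_{K(1)}S$, not in $\pi_nS_{\widehat{\ell}}$, so it cannot force $Im(J_{\mathbb{Q}_p})_n=Im(J)_n$ nor the injectivity half of Part~(1).

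The paper closes this gap by producing, for $p=\infty$ and $p=\ell$, a copy of $(\Sigma L_{K(1)}S)_{\geq 0}$ (respectively $(\Sigma L_{K(1)}S)_{\geq 1}$) on the \emph{source} side of $Pic(Sp_{\widehat{\ell}})_{\widehat{\ell}}$ through which the relevant $J$-map factors: for $p=\infty$ this factoring comes from the $\psi^u$-invariance of $J_{\mathbb{C}}$ (Lemma~\ref{discreteadams}) and the fiber sequence of~\ref{k1sph}; for $p=\ell$ it comes from the B\"okstedt--Hesselholt--Hsiang--Madsen decomposition of $K(\mathbb{Z}_\ell)_{\widehat{\ell}}$, which you do not mention. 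This factoring gives an honest upper bound on the image inside $\pi_nS_{\widehat{\ell}}$, and then the $\log$-computation shows the long composite $(\Sigma L_{K(1)}S)_{\geq 0}\to\cdots\to\Sigma L_{K(1)}S$ is a unit multiple of the connective cover, pinning everything down. For $p\neq\ell,\infty$ the paper cannot produce such a factoring directly; instead it uses Soul\'e's surjectivity theorem and the product formula (Theorem~\ref{product}) to deduce $Im(J_{\mathbb{Q}_p})_n\subseteq Im(J_{\mathbb{Q}_\ell})_n+Im(J_{\mathbb{R}})_n$. Your treatment of this case (``identify $\log\circ J_{\mathbb{Q}_p}^{tame}$ in terms of $\psi^p$, again producing a generator'') gives only the lower bound and omits this upper-bound step entirely. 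Also, a small point: $\log$ on $\pi_{\geq 2}Pic(L_{K(1)}Sp)$ is not the tautological identification with $\pi_{\geq 1}L_{K(1)}S$; Rezk's formula shows it differs by the factor $1-\ell^{k-1}$.
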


We recall that the statement about $J_{\mathbb{R}}$ is essentially classical (\cite{adams}), as are the statements about  $J_{\mathbb{Q}_p}$ for $p$ a generator of $\mathbb{Z}_\ell^\times$ (\cite{snaith}, \cite{mqrt}).  However, we do provide new proofs, based on Proposition \ref{k1analysis}.  Incidentally, I thank Jacob Lurie for indicating to me that Rezk's study of the $K(1)$-local logarithm could be used to analyze the real image of J.

\begin{proof}

First, here is a sketch.  There are three cases: $p=\infty$, $p=\ell$, and $p\neq\ell,\infty$.  In all three cases we will use the map $log\colon Pic(L_{K(1)}Sp)\rightarrow\Sigma L_{K(1)}S$ to detect classes in $Im(J_{\mathbb{Q}_p})_n$, thereby bounding this group from below.  Then in the first two cases we will also find a copy of (a cover of) $\Sigma L_{K(1)}S$ on the other side of $Pic(Sp_{\widehat{\ell}})$, letting us also bound $Im(J_{\mathbb{Q}_p})_n$ from above and relate it to $Im(J)_n$; but in the last case our method of bounding from above is more indirect, using the product formula of the next section to reduce to the previous cases $p=\ell,\infty$.\\

We start with $p=\infty$.  We will use $J_{\mathbb{R}}$ to verify claim 1 and simultaneously establish claim 2 for $p=\infty$.  Note that we can replace $J_\mathbb{R}\colon K(\mathbb{R})\rightarrow Pic(Sp_{\widehat{\ell}})$ by its precomposition $J_\mathbb{C}$ with the forgetful map $K(\mathbb{C})\rightarrow K(\mathbb{R})$; indeed, in obvious notation we clearly have $Im(J_{\mathbb{C}})_n\subseteq Im(J_{\mathbb{R}})_n$, but also $Im(J_{\mathbb{R}})_n\subseteq 2Im(J_{\mathbb{C}})_n=Im(J_{\mathbb{C}})_n$, so the two images coincide.

Now, fix a generator $u\in\mathbb{Z}_\ell^\times$.  Then by Lemma \ref{discreteadams}, the map $K(\mathbb{C})_{\widehat{\ell}}\rightarrow Pic(Sp_{\widehat{\ell}})_{\widehat{\ell}}$ induced by $J_{\mathbb{C}}$ is invariant under $\psi^u$; thus, referencing the fiber sequence in \ref{k1sph}, it factors via the boundary map as
$$K(\mathbb{C})_{\widehat{\ell}}\rightarrow (\Sigma L_{K(1)}S)_{\geq 0}\rightarrow Pic(Sp_{\widehat{\ell}})_{\widehat{\ell}}.$$
We complete this to the following diagram:
$$\xymatrix{ K(\mathbb{C})_{\widehat{\ell}}\ar[rd] & & & & \\
& (\Sigma L_{K(1)}S)_{\geq 0}\ar[r] & Pic(Sp_{\widehat{\ell}})_{\widehat{\ell}}\ar[r] & Pic(L_{K(1)}Sp)_{\widehat{\ell}}\ar[r]^-{log} & \Sigma L_{K(1)}S.\\
\Sigma S_{\widehat{\ell}}\ar[ur] & & & & }$$

Then Lemma \ref{lowreal} (on $\pi_0J_{\mathbb{R}}$) and  Proposition \ref{k1analysis} (on $\pi_0log$) imply that the long horizontal composition hits a generator on $\pi_0$.  But the set of homotopy classes of maps $(\Sigma L_{K(1)}S)_{\geq 0}\rightarrow \Sigma L_{K(1)}S$ is a free $\mathbb{Z}_\ell$-module on one generator given by the connective cover map; thus we deduce that the long horizontal composition is a unit multiple of the connective cover map, and hence an isomorphism on homotopy groups in nonnegative degrees.

That assertion is the key.  Indeed, fix $n>0$.  Then we find:
\begin{enumerate}
\item Since $\#\pi_{n+1} Pic(L_{K(1)}Sp)_{\widehat{\ell}}=\#\pi_{n+1}\Sigma L_{K(1)}S$, the map $\pi_{n+1}(log)$ is also an isomorphism, and so we can check the first claim $Im(J)_n\overset{\sim}{\longrightarrow} \pi_nL_{K(1)}S$ after applying $log$;
\item Similarly, we deduce that $(\Sigma L_{K(1)}S)_{\geq 0}\rightarrow Pic(Sp_{\widehat{\ell}})_{\widehat{\ell}}$ is an isomorphism on $\pi_1$, so the bottom diagonal map composed with the first horizontal map can be used to define $Im(J)_n$;
\item Also, considering the middle horizontal map we deduce that $\pi_nS_{\widehat{\ell}}\rightarrow \pi_nL_{K(1)}S$ is surjective, so the bottom diagonal map is surjective on $\pi_{n+1}$.
\end{enumerate}
Then since the top diagonal map is also surjective on $\pi_{n+1}$, we conclude again from the assertion the desired statements $Im(J_{\mathbb{R}})_n=Im(J)_n\overset{\sim}{\longrightarrow}\pi_nL_{K(1)}Sp$.\\

Now we use $J_{\mathbb{Q}_\ell}$ to reprove claim 1 and also establish claim 2 for $p=\ell$.  Note that since we are only interested in $\ell$-completions, by definition (\ref{padicj}) we can replace $J_{\mathbb{Q}_\ell}$ by $J_{\mathbb{Z}_\ell}\colon K(\mathbb{Z}_\ell)\rightarrow Pic(Sp_{\widehat{\ell}})$.  Now, recall that work of B\"{o}kstedt, Hesselholt, Hsiang, and Madsen gives an equivalence of spectra
$$K(\mathbb{Z}_\ell)_{\widehat{\ell}}\simeq (L_{K(1)}S)_{\geq 0}\times (\Sigma L_{K(1)}S)_{\geq 1}\times (\Sigma K_{\widehat{\ell}})_{\geq 3};$$
see \cite{madsen} for a survey.  Thus any map $\Sigma S_{\widehat{\ell}}\rightarrow K(\mathbb{Z}_\ell)_{\widehat{\ell}}$ classifying a generator of $(\mathbb{Z}_{\ell}^\times)_{(\ell)}$ factors uniquely up to homotopy through a map $(\Sigma L_{K(1)}S)_{\geq 1}\rightarrow K(\mathbb{Z}_\ell)_{\widehat{\ell}}$ inducing an isomorphism on $\pi_{n+1}$ for $n$ odd.  But then combining the second part of Lemma \ref{lowwild} (on $\pi_1(J_{\mathbb{Z}_\ell})$) with Proposition \ref{k1analysis} (on $\pi_1(log)$) we see that the composition of all of the maps except the first in
$$\Sigma S_{\widehat{\ell}}\rightarrow (\Sigma L_{K(1)}S)_{\geq 1}\rightarrow K(\mathbb{Z}_\ell)_{\widehat{\ell}}\overset{J_{\mathbb{Z}_\ell}}{\longrightarrow} Pic(Sp_{\widehat{\ell}})_{\widehat{\ell}}\rightarrow Pic(L_{K(1)}Sp)_{\widehat{\ell}}\overset{log}{\longrightarrow} \Sigma L_{K(1)}S$$
is surjective on $\pi_1$ and hence equal to a unit times the $0$-connected cover map; then we can reconclude the first claim and establish the second claim for $p=\ell$ just as in the case of $J_\mathbb{R}$.\\

Finally, we need to prove claim 2 for $p\neq\ell,\infty$.  This will be a bit different:  we will show first that $Im(J_{\mathbb{Q}_p})_n=0$ for $n$ even, then that $Im(J_{\mathbb{Q}_p})_n\twoheadrightarrow\pi_nL_{K(1)}S$ for all $n$, and finally that $Im(J_{\mathbb{Q}_p})_n\subseteq Im(J_{\mathbb{Q}_\ell})_n+Im(J_{\mathbb{R}})_n$ for all $n$; this suffices, given the previous work.

Note that since the boundary map $K(\mathbb{Q}_p)_{\widehat{\ell}}\rightarrow \Sigma K(\mathbb{F}_p)_{\widehat{\ell}}$ is surjective on homotopy groups (see e.g.\ \cite{soule} Prop.\ 4), we can replace $J_{\mathbb{Q}_p}$ with $J_{\mathbb{F}_p}\colon K(\mathbb{F}_p)\rightarrow \Sigma^{-1}Pic(Sp_{\widehat{\ell}})$.  Now, recall that Quillen in \cite{quillenfinite} has (essentially) produced an equivalence of spectra
$$K(\mathbb{F}_p)_{\widehat{\ell}}\simeq (A_p)_{\geq 0},$$
where for $u\in\mathbb{Z}_\ell^\times$ we let $A_u$ denote the fiber of $K_{\widehat{\ell}}\overset{\psi^u-1}{\longrightarrow}K_{\widehat{\ell}}.$  In particular $K(\mathbb{F}_p)_{\widehat{\ell}}$ has no homotopy groups in positive even degrees, which verifies that $Im(J_{\mathbb{Q}_p})_n=0$ for $n$ even.  But furthermore, if $d$ denotes the index of the subgroup topologically generated by $p$ in $\mathbb{Z}_\ell^\times$, then Lemma \ref{lowtame} (on $\pi_0J_{\mathbb{F}_p}$) and Proposition \ref{k1analysis} (on $\pi_1log$) imply that, on $\pi_0$, the composition
$$K(\mathbb{F}_p)_{\widehat{\ell}}\rightarrow \Sigma^{-1}Pic(L_{K(1)}Sp)_{\widehat{\ell}}\overset{log}{\longrightarrow} L_{K(1)}S$$
generates the subgroup $d\mathbb{Z}_\ell$ of $\pi_0 L_{K(1)}S\simeq \mathbb{Z}_\ell$; however, the set of homotopy classes of maps $A_p\rightarrow L_{K(1)}S$ is a free $\mathbb{Z}_\ell$-module on the norm map (defined by summing over the action of the $\mathbb{Z}_\ell^\times/\langle p\rangle$-Adams operations), which has exactly this effect on $\pi_0$ and is surjective on higher homotopy groups.  (Indeed, if $u\in\mathbb{Z}_\ell^\times$ is a generator then we can replace $p$ by $u^d$, as these generate the same subgroup; then this amounts to the identity $(u^d)^m-1=(u^m-1)(1+u^m+\ldots+(u^m)^{d-1})$.)  Thus we deduce that the displayed map is surjective on higher homotopy groups, and hence that $Im(J_{\mathbb{Q}_p})_n$ surjects onto $\pi_nL_{K(1)}S$, since by the above discussion (either of $J_\mathbb{R}$ or of $J_{\mathbb{Q}_\ell}$ or indeed of Rezk's formula), $\pi_{n+1}log$ is an isomorphism.

Thus, to finish it suffices to show that $Im(J_{\mathbb{Q}_p})_n\subseteq Im(J_{\mathbb{Q}_\ell})_n+Im(J_{\mathbb{R}})_n$.  For this, we recall that Soul\'{e} has shown (\cite{soule} Thm.\ 3) that the boundary map $K_{n+1}(\mathbb{Q})\rightarrow\oplus_{p<\infty}K_n(\mathbb{F}_p)$ associated the the localization sequence for $\mathbb{Z}\subseteq\mathbb{Q}$ is surjective; thus, given a class $x\in \pi_nK(\mathbb{F}_p)_{\widehat{\ell}}=K_n(\mathbb{F}_p)_{(\ell)}$ we can lift it along the boundary map to a class $\overline{x}\in K_{n+1}(\mathbb{Q})_{(\ell)}$ whose image along the boundary maps for any other prime are trivial.  But then we have $J_{\mathbb{Q}_{p'}}(\overline{x})=0$ for any prime $p'\neq p,\ell,\infty$, so the product formula (Theorem \ref{product}), proved in the next section, gives $J_{\mathbb{F}_p}(x)=J_{\mathbb{Q}_p}(\overline{x})=-J_{\mathbb{R}}(\overline{x})-J_{\mathbb{Q}_\ell}(\overline{x})$, implying the desired.
\end{proof}

The last step of the above proof, though it has the advantage of demonstrating the content of the product formula, is intrinsically unsatisfying in that it uses the other J's to prove a fact about $J_{\mathbb{Q}_p}$.  This would not be necessary if we could establish that $J_{\mathbb{F}_p}\colon K(\mathbb{F}_p)_{\widehat{\ell}}\rightarrow \Sigma^{-1}Pic(Sp_{\widehat{\ell}})_{\widehat{\ell}}$ factors through $(L_{K(1)}S)_{\geq 0}$ via the norm map, since then we could argue as in the other cases.  Of course, this factoring is trivial when $p$ generates $\mathbb{Z}_\ell^\times$, but otherwise an argument is needed.  It would suffice to establish the following conjecture, which permits reduction to the case of the $\ell$-complete $J_{\mathbb{C}}$, identified with the $\ell$-adic \'{e}tale $J_{\overline{\mathbb{F}_p}}$:

\begin{conjecture} Let $p$ be a prime, and $\ell$ a prime different from $p$.  Then $J_{\mathbb{F}_p}:K(\mathbb{F}_p)\rightarrow (S_{\widehat{\ell}})^\times$ identifies with the group completion of the map $Vect_{\mathbb{F}_p}^\sim\rightarrow Aut(S_{\widehat{\ell}})$ which sends $V$ to the effect of the (geometric) Frobenius map on the stable $\ell$-adic \'{e}tale homotopy type of the cofiber of $(V_{\overline{\mathbb{F}_p}}- 0)\rightarrow V_{\overline{\mathbb{F}_p}}$.\end{conjecture}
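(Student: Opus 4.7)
Plan:

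My plan is to follow the template of the proof of Lemma \ref{discreteadams} to rigorously construct the right-hand side of the conjecture as an $E_\infty$-map, and then identify it with $J_{\mathbb{F}_p}$ via a categorified Lefschetz trace.

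First, introduce the symmetric monoidal $\infty$-category $Var_{\mathbb{F}_p}$ of varieties over $\mathbb{F}_p$ under cartesian product, and the symmetric monoidal functor $h\colon Var_{\mathbb{F}_p}\to Sp_{\widehat{\ell}}$ sending $X\mapsto (\Sigma^\infty_+ X_{\overline{\mathbb{F}_p}})_{\widehat{\ell}}$, equipped with its natural geometric Frobenius self-equivalence $F$. For $V\in Vect_{\mathbb{F}_p}^\sim$, both $V_{\overline{\mathbb{F}_p}}-0$ and $V_{\overline{\mathbb{F}_p}}$ canonically lift to $Var_{\mathbb{F}_p}$, and the cofiber $c(V):=\mathrm{cofib}(h(V_{\overline{\mathbb{F}_p}}-0)\to h(V_{\overline{\mathbb{F}_p}}))$ is invertible in $Sp_{\widehat{\ell}}$ and carries a canonical Frobenius automorphism $F_V$. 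Its underlying invertible spectrum is canonically $(S^{2\dim V})_{\widehat{\ell}}$, e.g.\ by lifting $V$ to a free $W(\mathbb{F}_p)$-module and applying smooth-proper base change plus \'etale-Betti comparison as in the proof of Lemma \ref{discreteadams}. Isolating $F_V$ then yields an $E_\infty$-map $J^{\text{\'et}}_{\mathbb{F}_p}\colon Vect_{\mathbb{F}_p}^\sim\to Aut(S_{\widehat{\ell}})$, which is the map appearing on the right-hand side of the conjecture.

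Next, I would identify $J^{\text{\'et}}_{\mathbb{F}_p}$ with $J_{\mathbb{F}_p}$ by exhibiting both as categorical traces. At the level of $\pi_0$ the comparison reduces to the classical Lefschetz fixed-point formula: $\mathrm{tr}(F_V\mid c(V)) = |V(\mathbb{F}_p)| = p^{\dim V}$, which matches the degree of $J_{\mathbb{F}_p}(V)$. To upgrade this to a coherent $E_\infty$-equivalence, the idea is to interpret $J_{\mathbb{F}_p}$ itself \'etale-theoretically: the underlying finite set $V(\mathbb{F}_p)$ is a finite \'etale $\mathbb{F}_p$-scheme $\underline{V}$ on which Frobenius acts trivially, and the Pontryagin-Thom map defining $J_{\mathbb{F}_p}(V)$ can be reconstructed from the structure map $\underline{V}\to *$ inside the same \'etale formalism used for $h$. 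A categorified Lefschetz/trace formula applied to the closed immersion $\underline{V}\hookrightarrow V_{\overline{\mathbb{F}_p}}$ (the Frobenius fixed-point locus) should then identify $F_V$ with this combinatorial transfer, coherently in $V$.

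The main obstacle, and the reason the statement is only a conjecture, is this final coherence step: promoting the numerical Lefschetz identity to a spectrum-level, $E_\infty$-functorial equivalence of maps $Vect_{\mathbb{F}_p}^\sim\to Aut(S_{\widehat{\ell}})$. Producing such a formula in the $\ell$-adic \'etale six-functor formalism over $\mathbb{F}_p$, compatibly with direct sums and the $\mathrm{GL}_n(\mathbb{F}_p)$-action on $V$, appears to demand more machinery than is presently available here in directly usable form. Partial ingredients exist --- for instance in Hoyois's categorical trace formalism, or through an equivariant extension of the dualizability apparatus of Appendix \ref{spheres} --- but assembling them into the required $E_\infty$-coherent identification is, as far as I can see, the heart of the difficulty.
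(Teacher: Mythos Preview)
The paper does not prove this statement: it is explicitly stated as a \emph{conjecture}, with no proof offered. The paper's only commentary is the remark immediately following it, that the conjecture ``amounts to something of a homotopical amplification of the simplest special case of the Lefschetz fixed point formula in \'etale cohomology.''

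Your proposal is therefore not to be compared against a proof in the paper, but it is worth noting that your plan aligns precisely with the paper's own heuristic: you correctly set up the \'etale-theoretic side as an $E_\infty$-map (following the template of Lemma~\ref{discreteadams}), correctly observe that on $\pi_0$ the identification reduces to the classical Lefschetz count $|V(\mathbb{F}_p)|=p^{\dim V}$, and correctly isolate the genuine obstacle as promoting this numerical identity to an $E_\infty$-coherent equivalence of maps $Vect_{\mathbb{F}_p}^\sim\to Aut(S_{\widehat{\ell}})$. That is exactly where the paper leaves off, and your assessment that the available trace formalisms do not quite deliver this in ready-to-use form is accurate. In short: your outline is a faithful elaboration of what the paper gestures at, and your identification of the gap is the reason the statement remains a conjecture.
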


Thinking of this cofiber as representing the compactly supported homotopy type of the $\overline{\mathbb{F}_p}$-variety $V_{\overline{\mathbb{F}_p}}$, whose Frobenius fixed points are $V$ as a set, we see that this conjecture amounts to something of a homotopical amplification of the simplest special case of the Lefschetz fixed point formula in \'{e}tale cohomology.  It may also be interesting to note that this conjecture, if true, provides an unstable incarnation of $J_{\mathbb{F}_p}$, something which does not exist a priori.

Theorem \ref{trueimj} also leaves open the question of just what the $\ell$-completion of $J_{\mathbb{Q}_\ell}$ does on homotopy groups in odd degrees.  In terms of the above-referenced calculation of $K(\mathbb{Z}_\ell)_{\widehat{\ell}}$, what's left is understanding what $J_{\mathbb{Z}_{\ell}}$ does to the pieces $(L_{K(1)}S)_{\geq 0}$ and $(\Sigma K_{\widehat{\ell}})_{\geq 3}$.  The first piece comes from the unit of $K(\mathbb{Z}_\ell)$, and is fairly easy to analyze: on it, $J_{\mathbb{Z}_\ell}$ is trivial on homotopy groups (e.g.\ because the product formula shows that it factors through $K(\mathbb{R})_{\widehat{\ell}}$), but nonetheless nontrivial as a map of spectra, since after composing with $log$ it classifies a generator of $\pi_{-1}L_{K(1)}S$.  As for the last piece, work of Soul\'{e} (\cite{soule2}) shows that it corresponds in a precise sense to the system of norm-compatible integral units in the $\ell$-cyclotomic tower of $\mathbb{Q}_\ell$, but I don't know how to make the required analysis of its behavior under $J_{\mathbb{Z}_\ell}$.

We have also completely neglected to analyze the $J_{\mathbb{Q}_p}$ at the prime $\ell=2$.  The reason is that certain intricacies arise at $\ell=2$, and the details haven't been seen through.  The outcome, however, should be that for $p<\infty$ the $p$-adic image of $J$ agrees with the complex image of J, which is generally smaller than the real image of J.  For relevant literature, see \cite{snaith} for $p\neq 2$ and \cite{2adic} for $p=2$.

\section{Proof of the product formula}\label{proof}

In this section we prove Theorem \ref{product}, which can be restated as follows (see Appendix \ref{sum}): the map
$$K(\mathbb{Q})_{\geq 2}\rightarrow\prod_{p\leq\infty} K(\mathbb{Q}_p)_{\geq 2}\overset{J_{\mathbb{Q}_p}}{\longrightarrow}\prod_{p\leq\infty} Pic(Sp)$$
has a canonical lifting along the natural map $\vee_{p\leq\infty}Pic(Sp)\rightarrow\prod_{p\leq\infty} Pic(Sp)$, and there is a canonical trivialization of the resulting composite $K(\mathbb{Q})_{\geq 2}\rightarrow\vee_{p\leq\infty}Pic(Sp)\rightarrow Pic(Sp)$ with the ``sum'' map.

Let us start the proof with a series of reductions.  Note that we can clearly replace $Pic(Sp)$ by $Pic(Sp)_{\geq 2}$ in the above.  Then by Serre finiteness we have $Pic(Sp)_{\geq 2}\simeq\prod_{\ell<\infty}Pic(Sp_{\widehat{\ell}})_{\geq 2}$ and also $\vee_{p\leq\infty} Pic(Sp)_{\geq 2}\simeq \prod_{\ell<\infty}\vee_{p\leq\infty} Pic(Sp_{\widehat{\ell}})_{\geq 2}$; thus the problem breaks up over primes $\ell$, so we can further fix a prime $\ell$ and replace $Pic(Sp)_{\geq 2}$ by $Pic(Sp_{\widehat{\ell}})_{\geq 2}$.  Then for $p\neq \ell$ we can also replace $J_{\mathbb{Q}_p}$ with $J_{\mathbb{Q}_p}^{tame}$, since the target is now away-from-$p$.

But furthermore, in the localization sequence $K(\mathbb{F}_\ell)\rightarrow K(\mathbb{Z}_{(\ell)})\rightarrow K(\mathbb{Q})$ the fiber is prime-to-$\ell$ in positive degrees (\cite{quillenfinite}); hence, the target $Pic(Sp_{\widehat{\ell}})_{\geq 2}$ being $\ell$-local, we can replace the source $K(\mathbb{Q})_{\geq 2}$ by $K(\mathbb{Z}_{(\ell)})_{\geq 2}$ in our desired statement.  But then only $J_{\mathbb{Z}_{\ell}}$ is relevant, not the full $J_{\mathbb{Q}_{\ell}}$; and in fact, since $Pic(Sp_{(\ell)})_{\geq 2}\simeq Pic(Sp_{\widehat{\ell}})_{\geq 2}$, we can replace $J_{\mathbb{Z}_\ell}$ by its $\ell$-local analog $K(\mathbb{Z}_{(\ell)})\rightarrow Pic(Sp_{(\ell)})$ --- given by sending a finite free $\mathbb{Z}_{(\ell)}$-module $M$ to the inverse of the $\ell$-local stable top cell of $BM$ --- and then also ignore the passage to the $1$-connected cover if we want.

All told, Theorem \ref{product} reduces to the case $P=\{\text{all primes but }\ell\}$ of following statement:

\begin{theorem}\label{tori}
Let $P$ be a set of primes.  Then the map $TC\colon K(\mathbb{Z}[1/P])\rightarrow Pic(Sp[1/P])$ which sends a finite free $\mathbb{Z}[1/P]$-module $M$ to the $P$-inverted stable top cell of $BM$ (c.f.\ \ref{wild}) is canonically homotopic to the product of $J_{\mathbb{R}}$ with the infinite product over $p\in P$ of $J_{\mathbb{Q}_p}^{tame}$, these J's being implicitly restricted to $K(\mathbb{Z}[1/P])$ and composed to land in $Pic(Sp[1/P])$.
\end{theorem}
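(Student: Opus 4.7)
The plan is to prove Theorem \ref{tori} by constructing a coherent adelic decomposition of the $P$-inverted stable top cell of $BM$, matching the archimedean and $p$-adic contributions to $J_\mathbb{R}(M)$ and the $J_{\mathbb{Q}_p}^{tame}(M)$ respectively.

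The starting observation is that for a finite free $\mathbb{Z}[1/P]$-module $M$, the diagonal embedding
$$M \hookrightarrow M_\mathbb{R} \oplus \bigoplus_{p \in P}(M \otimes \mathbb{Q}_p/\mathbb{Z}_p)$$
is discrete with compact quotient isomorphic to a standard real torus $M_\mathbb{R}/L$ for any $\mathbb{Z}$-lattice $L \subset M_\mathbb{R}$. The resulting short exact sequence of topological abelian groups realizes $BM$ as an iterated fiber bundle whose structure naturally separates an archimedean contribution and a $p$-adic contribution for each $p \in P$, with the construction $E_\infty$-functorial in $M$. The first step is to set this adelic presentation up rigidly enough to survive all the $\infty$-categorical decorations.

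Next I would invoke the Poincar\'{e} duality $E_\infty$-algebra machinery of Appendix \ref{spheres} --- in particular the algebraic sphere functor $Sph^{alg}$ from Theorem \ref{algsphere} --- to promote this geometric decomposition to a smash-product decomposition of invertible spectra in $Sp[1/P]$. The archimedean piece is the top cell of a real compact torus, identified by the parallelizable-manifold case of Poincar\'{e} duality with the one-point compactification $(M_\mathbb{R})^+ = J_\mathbb{R}(M)$. For each $p \in P$, the corresponding piece involves the stable top cell of $B(M \otimes \mathbb{Q}_p/\mathbb{Z}_p)$, which via the telescope presentation $M \otimes \mathbb{Q}_p/\mathbb{Z}_p = \mathrm{colim}_N M/p^N$ reduces to the Pontryagin--Thom construction on the finite $\mathbb{F}_p$-vector space $M/pM$. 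This is precisely the discrete construction underlying $J_{\mathbb{F}_p}$ (Section \ref{tame}), and so matches $J_{\mathbb{Q}_p}^{tame}(M)$ through the identification of the telescope with (the suspension of) the localization boundary map $\partial \colon K(\mathbb{Q}_p) \to \Sigma K(\mathbb{F}_p)$.

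The hardest step is the second one. While the place-by-place splitting of the top cell is intuitively clear --- and on $\pi_1$ it manifestly recovers the classical product formula $|x| = \prod_p \|x\|_p^{-1}$ for the absolute values on $\mathbb{Q}^\times$ --- rigorously establishing it with full $E_\infty$-coherence in $M$ and compatibility with smash products on both sides requires the full apparatus of Appendix \ref{spheres}, where functorial dualities and stable top cells are extracted using the tensor product of stable presentable $\infty$-categories (\cite{ha} Section 6.3). The matching of the $p$-adic piece with the specific discrete Pontryagin--Thom formalism underlying $J_{\mathbb{F}_p}$ is the most delicate coherence check: it amounts to recognizing the stable top cell functor on classifying spaces of finite abelian $p$-groups, after $p$-inversion, as agreeing up to suspension with the composite $\partial \circ J_{\mathbb{F}_p}$, at the full $E_\infty$-level.
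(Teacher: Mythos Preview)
Your adelic decomposition does not exist as written. For $M$ a finite free $\mathbb{Z}[1/P]$-module and $p\in P$, the group $M\otimes\mathbb{Q}_p/\mathbb{Z}_p$ vanishes: $\mathbb{Q}_p/\mathbb{Z}_p$ is $p$-power torsion while $p$ acts invertibly on $M$. So your ``diagonal embedding'' collapses to $M\hookrightarrow M_\mathbb{R}$, and this is \emph{dense}, not discrete, as soon as $P$ is nonempty (e.g.\ $\mathbb{Z}[1/2]\subset\mathbb{R}$). There is therefore no compact quotient torus, no fiber-bundle presentation of $BM$, and no smash decomposition of its top cell into archimedean and $p$-adic pieces. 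Your telescope $\operatorname{colim}_N M/p^N$ is likewise the zero group. The intuition that the $p$-adic contribution should be ``trivializable'' is correct, but the content of the theorem is precisely that the trivialization is not canonical: different choices of $\mathbb{Z}$-lattice $L\subset M$ give different identifications $TC(M)\simeq J_\mathbb{R}(L\otimes\mathbb{R})$, and these differ exactly by $J_{\mathbb{F}_p}$ applied to the kernels of the comparison isogenies. Your proposal never engages with this.

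The paper's proof is organized around the localization fiber sequence $\vee_{p\in P}K(\mathbb{F}_p)\to K(\mathbb{Z})\to K(\mathbb{Z}[1/P])$, so that the task splits into (i) identifying $TC$ with $J_\mathbb{R}$ after restriction to $K(\mathbb{Z})$, and (ii) identifying the resulting self-equivalence of the zero map on each $K(\mathbb{F}_p)$ with $J_{\mathbb{F}_p}$. Step (i) is accomplished by sending $M\in Ab_P$ to the \emph{orbifold} $\mathbb{B}M=(M\otimes\mathbb{R})/M$: this is always a compact orbifold (a torus crossed with a $BG$ for finite $G$ of $P$-primary order), and on it the topological sphere $Sph^{top}$ is manifestly $J_\mathbb{R}(M\otimes\mathbb{R})$ while the algebraic sphere $Sph^{alg}(C^*(\mathbb{B}M))$ is manifestly $TC(M[1/P])$; Theorem~\ref{unithm} identifies the two. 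Step (ii) is a direct unwinding via Propositions~\ref{bgshf} and~\ref{algtriv}. The key object you are missing is this intermediate exact category $Ab_P$ and its orbifold realization: it is what lets one work with genuine compact objects rather than with the non-discrete $M\subset M_\mathbb{R}$.
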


We can reinterpret this statement topologically: it is saying that, for tori up to $P$-isogeny, the $P$-inverted stable top cell identifies with the one-point compactification of the Lie algebra, up to an error term coming from the possible kernels of $P$-isogenies; and in fact, this is how the proof will go.  Here is a picture:

\begin{figure}[H]
\centering
\includegraphics[scale=.5]{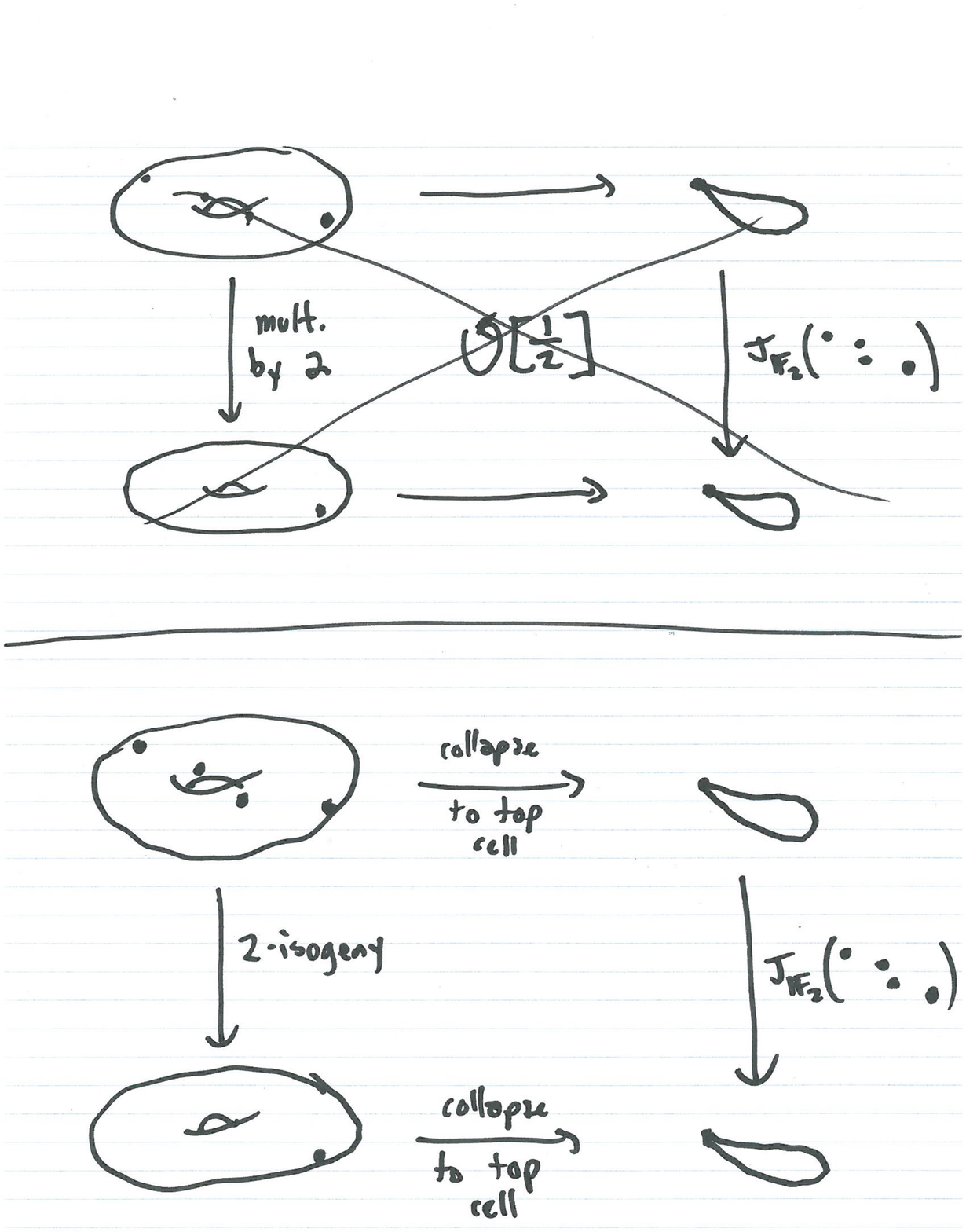}
\caption{An illustration of the product formula:  this diagram stably commutes after $2$-inversion.  Here the pictured isogeny is multiplication by two on some fixed $2$-torus.}
\end{figure}

\begin{proof}
By functoriality of the localization sequence in algebraic K-theory (\cite{Q} Section 5), for $p'\in P$ the map $J_{\mathbb{Q}_{p'}}^{tame}$ restricted to $K(\mathbb{Z}[1/P])$ identifies with the composition
$$K(\mathbb{Z}[1/P])\overset{\partial}{\longrightarrow} \vee_{p\in P}\Sigma K(\mathbb{F}_{p})\rightarrow \Sigma K(\mathbb{F}_{p'})\overset{J_{\mathbb{F}_{p'}}}{\longrightarrow} Pic(Sp[1/p']),$$
where $\partial$ is the boundary map of the localization sequence $\vee_{p\in P}K(\mathbb{F}_p)\rightarrow K(\mathbb{Z})\rightarrow K(\mathbb{Z}[1/P])$; thus the infinite product of the $J_{\mathbb{Q}_p}^{tame}$ in the statement of the theorem is canonically defined by $\partial$, and it suffices to produce a canonical homotopy between $TC$ and the product of $J_\mathbb{R}$ with the composition $K(\mathbb{Z}[1/P])\overset{\partial}{\longrightarrow}\vee_{p\in P} \Sigma K(\mathbb{F}_p)\overset{J_{\mathbb{F}_p}}{\longrightarrow} Pic(Sp[1/P])$.  Equivalently, we have to provide the following data:
\begin{enumerate}
\item After restriction to $K(\mathbb{Z})$, a canonical identification of $TC$ with $J_{\mathbb{R}}$;
\item After further restriction to $\vee_{p\in P}K(\mathbb{F}_p)$, a canonical identification of the resulting self-equivalence $0\simeq TC\simeq J_{\mathbb{R}}\simeq 0$ of the zero map $\vee_{p\in P}K(\mathbb{F}_p)\rightarrow Pic(Sp[1/P])$ with the map $\vee_{p\in P}K(\mathbb{F}_p)\overset{J_{\mathbb{F}_p}}{\longrightarrow} Aut(S[1/P])$.
\end{enumerate}
Now let's recall exactly where this localization sequence $\vee_{p\in P}K(\mathbb{F}_p)\rightarrow K(\mathbb{Z})\rightarrow K(\mathbb{Z}[1/P])$ comes from.  Let $Ab_P^f$ denote the exact category of finite $P$-primary abelian groups, $Ab_P$ the exact category of finitely generated abelian groups with $P$-primary torsion, and $Vect_{\mathbb{Z}[1/P]}$ the exact category of finitely generated free $\mathbb{Z}[1/P]$-modules.  Then there are exact functors
$$Ab^f_P\rightarrow Ab_P\rightarrow Vect_{\mathbb{Z}[1/P]}$$
with canonically trivial composition; but combining the equivalence (\cite{gray} pp.\ 8) between the group completion and Q-construction definitions of algebraic K-theory with the devissage theorem (\cite{Q} Thm.\ 3) we see that the natural maps $\vee_{p\in P}K(\mathbb{F}_p)\rightarrow K(Ab_P^f)$ and $K(\mathbb{Z})\rightarrow K(Ab_P)\rightarrow K(Mod_\mathbb{Z}^{fg})$ and $K(\mathbb{Z}[1/P])\rightarrow K(Vect_{\mathbb{Z}[1/P]})\rightarrow K(Mod_{\mathbb{Z}[1/P]}^{fg})$ are all equivalences; thus the localization theorem (\cite{Q} Thm.\ 5) implies that the induced sequence on Waldhausen K-theory (our chosen model -- see Appendix \ref{sdot})
$$K(Ab_P^f)\rightarrow K(Ab_P)\rightarrow K(Vect_{\mathbb{Z}[1/P]})$$
is a fiber sequence and indeed was the localization sequence we meant all along via the above equivalences.

Now, consider the functor $\mathbb{B}\colon Ab_P\rightarrow Orb_*$ from $Ab_P$ to pointed orbifolds which sends an $M\in Ab_P$ to the orbifold quotient of $M\otimes\mathbb{R}$ by its translation action of $M$, pointed by the image of the origin $0\in M\otimes\mathbb{R}$.  (Thus, if $M$ is a finitely generated free $\mathbb{Z}$-module then $\mathbb{B}M$ is a torus, whereas if $M$ is finite then $\mathbb{B}M=BM$ is just the usual classifying stack --- but in any case $\mathbb{B}M$ has the homotopy type of $BM$.)  Now, let us work with the Bousfield localization $L_{H\mathbb{Z}[1/P]}$ instead of $P$-inversion; this is permissible since they give the same $Pic$ (see \ref{bousfact} and \ref{pic}).  Then, referencing the terminology of Theorem \ref{unithm}, we claim that $\mathbb{B}$ actually gives an exact functor $Ab_P\rightarrow CUO_*$.  Indeed:
\begin{enumerate}
\item Each $\mathbb{B}M$ is $L_{H\mathbb{Z}[1/P]}$-compact, being the product of a torus (just plain compact) with $BM$ for a finite group $M$ whose order is invertible in $\mathbb{Z}[1/P]$ (see the discussion after Definition \ref{lecpct});
\item Each $\mathbb{B}M$ has $L_{H\mathbb{Z}[1/P]}$-unipotent duality, since tori are parallelizable (see the discussion after Definition \ref{unidual});
\item $\mathbb{B}$ preserves pullbacks by epimorphisms $M\twoheadrightarrow M'$ and sends $0$ to the point:  indeed, the first claim is straightforward to check after pulling back to $M'\otimes\mathbb{R}$, which covers $\mathbb{B}M'$; and the second claim is trivial.
\item For every epimorphism $M\twoheadrightarrow M'$ the induced map $\mathbb{B}M\rightarrow\mathbb{B}M'$ is an $L_{H\mathbb{Z}[1/P]}$-unipotent submersion:  indeed, ``submersion'' is clear by pulling back to $M'\otimes\mathbb{R}$, and for ``$L_{H\mathbb{Z}[1/P]}$-unipotent'' we can use the criterion of Proposition \ref{criterion}, which applies because, setting $M''=ker(M\twoheadrightarrow M')$, the monodromy action on  the fiber $\mathbb{B}M''$ is given by translation and hence is element-wise trivial up to homotopy, $\mathbb{B}M''$ being connected.
\end{enumerate}

Thus, by Theorem \ref{unithm}, the following two maps $K(Ab_P)\rightarrow Pic(L_{H\mathbb{Z}[1/P]}Sp)$ are canonically homotopic:
\begin{enumerate}
\item First, $M\mapsto Sph^{top}(\mathbb{B}M)$ (see Theorem \ref{stacksphere});
\item Second, $M\mapsto Sph^{alg}(C^*(\mathbb{B}M))$ (see Theorem \ref{algsphere}).
\end{enumerate}
However, the first map identifies with $M\mapsto J_{\mathbb{R}}(M\otimes\mathbb{R})$:  indeed, the additivity theorem (\ref{add}) applied first to the fiber sequence $M\rightarrow M\otimes\mathbb{R}\rightarrow \mathbb{B}M$ and then to the fiber sequence $*\rightarrow *\rightarrow M$ identifies $[\mathbb{B}M]$ with $[M\otimes\mathbb{R}]$ in $K(Orb_*)$; but $Sph^{top}(M\otimes\mathbb{R})$ identifies with $J_\mathbb{R}(M\otimes\mathbb{R})$ by construction.  And the second map identifies with $M\mapsto TC(M[1/P])$:  indeed, the map $BM\rightarrow B(M[1/P])$ is a $\mathbb{Z}[1/P]$-homology isomorphism, so $C^*(BM[1/P])\rightarrow C^*(BM)$ is an equivalence.

Thus Theorem \ref{unithm} has provided the desired homotopy $TC\simeq J_{\mathbb{R}}$ on restriction to $K(Ab_P)$.  Then all that remains is to identify, for each $p\in P$, the resulting self-equivalence
$$0\simeq TC\simeq J_{\mathbb{R}}\simeq 0$$
of the zero map $K(\mathbb{F}_p)\rightarrow Pic(L_{H\mathbb{Z}[1/P]}Sp)$ with $J_{\mathbb{F}_p}\colon K(\mathbb{F}_p)\rightarrow Aut(S[1/P])$.  However, Proposition \ref{bgshf} calculates the last equivalence, and Proposition \ref{algtriv} calculates the composition of the first two; putting it together, that self-equivalence of $0$ identifies with the group completion of the map $Vect_{\mathbb{F}_p}^\sim\rightarrow Aut(S[1/P])$ given by sending $M\in Vect_{\mathbb{F}_p}^\sim$ (viewed merely as a finite group) to the $P$-inversion of the composition
$$S\simeq (\prod_M S)^{M}\rightarrow \prod_M S\rightarrow (\prod_MS)_{M}\simeq (\vee_M S)_{M}\simeq S,$$
and this does indeed agree with the description of $J_{\mathbb{F}_p}$ (\ref{tame}).
\end{proof}

\section{Closing remarks and speculations}\label{remarks}

\subsection{Extracting tangible number-theoretic consequences from the product formula (other than quadratic reciprocity)}

I don't know how it can be done.  The natural map from Milnor K-theory to Quillen K-theory of local fields with finite coefficients is zero in degrees $\geq 3$ (e.g.\ by Bloch-Kato), and this rules out naive extraction of ``higher Hilbert symbols'' from the $\pi_n J_{\mathbb{Q}_p}$.

\subsection{The function field case}

Let us sketch the function field result --- due to Gillet (\cite{gillet} Section 2.3) --- that inspired Theorem \ref{product}.  Fix a field $k$.  Then for any complete discrete valuation field $L$ whose residue field $k_L$ is a finite extension of $k$ we can define a map $J_L\colon K(L)\rightarrow \Sigma K(k)$ as the composition
$$K(L)\rightarrow \Sigma K(k_L)\rightarrow \Sigma K(k),$$
where the first map is the boundary map in the localization sequence for the ring of integers of $L$, and the second map is the forgetful map.

The product formula in the function field case then says that if $X$ is any proper smooth curve over $k$, with fraction field say $K$, then the infinite sum over all closed points $x\in X$ of the compositions
$$K(K)\rightarrow K(K_x)\overset{J_{K_x}}{\longrightarrow} \Sigma K(k)$$
is canonically trivial.  (This follows from the formalism of proper pushforwards in algebraic K-theory, which implies that map $\vee_{x\in X}K(k_x)\rightarrow K(k)$ factors as
$$\vee_{x\in X}K(k_x)\rightarrow K(X)\rightarrow K(k),$$
where the first map fits into the localization sequence $\vee_{x\in X}K(\kappa_x)\rightarrow K(X)\rightarrow K(K)$.)

Thinking of the function field $K$ as like $\mathbb{Q}$ and the Laurent fields $K_x$ as like the $\mathbb{Q}_p$, the analogy with Theorem \ref{product} is clear; however, it is inexact, for two reasons:
\begin{enumerate}
\item The above maps $J_L$ are defined on the full K-theory spectra, whereas to define the $J_{\mathbb{Q}_p}$ we were obliged to pass to $1$-connected covers in concession to wild phenomena (\ref{wild});
\item The target spectra are of a different nature:  above we have $\Sigma K(k)$ whereas in Theorem \ref{product} we had $Pic(Sp)$.
\end{enumerate}
However, these differences are overcome by $K(1)$-localization (which, by Thomason's descent theorem \cite{thomason}, amounts to considering $\ell$-adic \'{e}tale K-theory instead of algebraic K-theory).  Indeed, $K(1)$-localization is invariant under passage to connective covers, obviating the first point; and as for the second point, the map $log$ (\ref{k1analysis}) furnishes an equivalence $L_{K(1)}Pic(Sp)\simeq L_{K(1)}\Sigma S$, and $L_{K(1)}\Sigma S$ is perfectly analogous to $L_{K(1)}\Sigma K(k)$ according to the well-touted tenet that ``the algebraic K-theory of the field with one element is the sphere spectrum''.

However, even better would be to have analogous proofs of these analogous results, and as far as I can tell the above application of $K(1)$-localization does nothing for this.

However, a different application of $K(1)$-localization --- on the inside of $Pic$ rather than the outside --- may do the trick.  Indeed, if we take our target to be $Pic(L_{K(1)}Sp)$ instead of $Pic(Sp)$, then we can circumvent passage to the $1$-connected covers in the definition of $J_{\mathbb{Q}_p}$ for a different reason:  the Ravenel-Wilson calculations (\cite{raw}) imply that the $E_\infty$-algebra $C^*(BM)$ in $L_{K(1)}Sp$ is a Poincar\'{e} duality algebra for $M$ any finitely generated $\mathbb{Z}_p$-module, not necessarily free; but for $M$ finite $C^*(BM)$ has canonically trivial $K(1)$-local top cell (because of the fiber sequence $M\rightarrow * \rightarrow BM$).  This means that the map $J_{\mathbb{Z}_p}\colon K(\mathbb{Z}_p)\rightarrow Pic(L_{K(1)}Sp)$ is canonically trivial on restriction to $K(\mathbb{F}_p)$, and hence, by the localization sequence, canonically extends to the desired map $J_{\mathbb{Q}_p}^{wild}\colon K(\mathbb{Q}_p)\rightarrow Pic(L_{K(1)}Sp)$ even without passage to $1$-connected covers.

A consequence is that after passing to $Pic(L_{K(1)}Sp)$ there's no tangible obstruction to realizing the following potential alternative proof of Theorem \ref{product}:  fix a prime $\ell$ to take $K(1)$-localization at, and somehow define a sort of universal J-homomorphism $J\colon K(LCA[1/\ell])\rightarrow Pic(L_{K(1)}Sp)$ (where $LCA[1/\ell]$ denotes the exact category of uniquely $\ell$-divisible locally compact abelian groups), having the property that for each $p\leq\infty$ the composition $K(\mathbb{Q}_p)\rightarrow K(LCA[1/\ell])\overset{J}{\longrightarrow} Pic(L_{K(1)}Sp)$ agrees with $J_{\mathbb{Q}_p}$.  Then the $K(1)$-local version of Theorem \ref{product} would follow by considering, for $V\in Vect_\mathbb{Q}$, the exact sequence
$$0\rightarrow V\rightarrow V\otimes\mathbb{A}\rightarrow (V\otimes\mathbb{A})/V\rightarrow 0$$
in $LCA[1/\ell]$, where $\mathbb{A}$ denotes the ring of ad\`{e}les.  In this exact sequence the first term $V$ is discrete and hence canonically trivial in K-theory by an Eilenberg swindle with infinite direct sums, and by the same token the last term $V\otimes\mathbb{A}/V$ is compact and hence canonically trivial in K-theory by an Eilenberg swindle with infinite direct products; thus the middle term is also canonically trivial by the additivity theorem, and hence the product formula holds even inside $K(LCA[1/\ell])$, i.e.\ before composing with $J$.

The above-proposed proof of $L_{K(1)}$ of Theorem \ref{product} is analogous to Tate's proof (\cite{tate}) of the product formula for valuations of $\mathbb{Q}$ using Haar measures; indeed, these proofs are formally the same once one views the theory of Haar measures for locally compact abelian groups as providing a map of spectra $K(LCA)\rightarrow \Sigma H\mathbb{R}_{>0}$.

The analogous set-up in the function field case does indeed exist, using a map $J\colon K(LCV_k)\rightarrow \Sigma K(k)$ (in all likelihood an equivalence) which can be described as follows:  $LCV_k$ denotes the exact category of locally linearly compact $k$-vector spaces, also known as Tate $k$-vector spaces (see \cite{dr}), and $J$ sends a $V\in LCV_k$ to the contractible space $Latt(V)$ of linearly compact subspaces of $V$ under inclusion together with the $\Omega^\infty K(k)$-bundle over $Latt(V)$ which associates to every inclusion $L_0\subseteq L_1$ of linearly compact subspaces of $V$ the class of $L_1/L_0$ in $\Omega^\infty K(k)$ (compare \cite{Q} Remark 5.17).

\subsection{The general number field case and roots of unity}

For any characteristic zero local field $L$ plus finite subgroup $\mu$ of roots of unity in $L$ it should be possible to define a J-homomorphism $K(L)_{\geq 2}\rightarrow Pic(Sp_\mu)$, where $Sp_\mu$ denotes the $\infty$-category of $\mu$-equivariant spectra; and for a number field $K$ together with $\mu\subseteq K^*$ the obvious product formula should hold and should recover Hilbert reciprocity for $K$ on $\pi_2$.

\subsection{$J_{\mathbb{Z}_p}$ and Lazard duality for $p$-adic Lie groups}

Recall that for $M$ a smooth compact manifold, Atiyah duality (\cite{atiyah}) promotes Poincar\'{e} duality to the level of local systems of spectra on $M$ by taking as dualizing object the fiberwise application of the real J-homomorphism to the tangent bundle of $M$ (c.f.\  \ref{cpct}).  In a similar way, for $G$ a torsion-free compact $p$-adic Lie group, Lazard's Poincar\'{e} duality for $BG$ should promote to local coefficient systems of $p$-complete spectra on the toposistic $BG$, with dualizing object given by fiberwise application of $J_{\mathbb{Z}_p}$ to the the $\mathbb{Z}_p$-Lie algebra of $G$ with its adjoint action.  And $K(n)$-locally, this should extend to an even larger class of $p$-adic Lie groups.

\subsection{Higher chromatic classes}

Homotopy theorists are interested in finding higher chromatic analogs of the J-homomorphism, which would hopefully lift certain classes in the homotopy groups of the $E(n)$-local sphere ($n\geq 2$) to the unlocalized sphere.  I have no promising ideas on this subject; let me only share the following naive observation:  after passing from curves to surfaces in the algebraic-geometric case, the natural target becomes $\Sigma^2 K(k)$ rather than $\Sigma K(k)$; thus, bearing in mind the Ausoni-Rognes idea (\cite{ar}) that the ``fraction field of complex K-theory'' is a sort of $2$-dimensional local field with algebraic K-theory of chromatic level 2, perhaps in the $E(2)$-local case one should change targets from $Pic(Sp)$ to the further delooping $Br(Sp)$ (c.f.\ forthcoming work of Gepner), or something similar.

\appendix

\section{Infinite sums of maps of spectra}\label{sum}
Here we discuss the notion of an infinite sum of maps between spectra.  (In the introduction we used the word ``product'' instead of ``sum'', but that's just because our target spectrum $Pic(Sp)$ had spectrum structure coming from the smash product (\ref{pic}).  For a general target spectrum people usually talk about maps being summed, not producted, just like for abelian groups.)

In fact, the definition we give works not just for maps of spectra, but for maps in any pointed $\infty$-category $\mathcal{C}$ with coproducts.  First let's establish notation in this context.  We will denote coproducts in $\mathcal{C}$ using $\vee$, as is conventional in our example of interest $\mathcal{C}=Sp$.  Further, if $Y$ is an object of $\mathcal{C}$ and $I$ a set, then we write $\varphi_i\colon \vee_{i\in I}Y\rightarrow Y$ for the map given by the identity on the $i^{th}$ summand and the point everywhere else, and we write $\sigma\colon \vee_{i\in I}Y\rightarrow Y$ for the map which is the identity on every summand.

\begin{definition}  Let $\{f_i\}_{i\in I}\colon X\rightarrow Y$ be a set of parallel maps in a pointed $\infty$-category $\mathcal{C}$ with coproducts.  We define the ``space of ways of summing the $\{f_i\}_{i\in I}$'' to be the space of maps $f\colon X\rightarrow\vee_{i\in I}Y$ together with homotopies $\varphi_i\circ f\simeq f_i$ for all $i\in I$.

The operation of composing $f$ with $\sigma$ then gives a natural ``sum'' map from the space of ways of summing the $\{f_i\}_{i\in I}$ to the space of maps $Map(X,Y)$.
\end{definition}

The terminology may seem awkward.  In the familiar case where $\mathcal{C}=Ab$ is the category of abelian groups, there's really no space in sight, nor even just a set: in abelian groups, the natural map $\vee_{i\in I}Y\rightarrow\prod_{i\in I}Y$ from the coproduct to the product is a monomorphism, so the space of ways of summing the $\{f_i\}_{i\in I}$ is always either empty or contractible.  Thus being able to be summed is just a condition:  the $\{f_i\}_{i\in I}$ can be summed if and only if for any $x\in X$ we have $f_i(x)=0$ for all but finitely many $i$, in which case their uniquely determined sum $g$ is given by $g(x)=\sum_{i\in I}f_i(x)$, where in reality this is only a finite sum over the nonzero $f_i(x)$'s.

In contrast, when $\mathcal{C}=Sp$ is the $\infty$-category of spectra, the map $\vee_{i\in I} Y\rightarrow\prod_{i\in I} Y$ is generally in no sense a monomorphism, so that ``being able to be summed'' is not actually a property, but structure.  To illustrate this, let's give an example of maps $\{f_i\}_{i\in I}$ that can be summed in two different ways, giving two inequivalent sums.  In fact, we will give an example where the zero map $X\rightarrow Y$ can be summed with itself infinitely often to give a nonzero map (it can, of course, also be summed with itself any number of times to give zero, by choosing the map $X\rightarrow\vee_{i\in I} Y$ to be zero).

For this, let $Y$ be any spectrum with $\pi_0(Y)\simeq\mathbb{Z}$, and let $X$ be the fiber of $\vee_{n\in\mathbb{N}}Y\rightarrow \prod_{n\in\mathbb{N}}Y$.  Then the natural map $X\rightarrow\vee_{n\in\mathbb{N}}Y$ has null composite with $\vee_{n\in\mathbb{N}}Y\rightarrow\prod_{n\in\mathbb{N}}Y$ by construction, and hence gives a way of summing an $\mathbb{N}$'s worth of zero maps from $X$ to $Y$; but on the other hand we claim that the corresponding sum $X\rightarrow\vee_{n\in\mathbb{N}}Y\overset{\sigma}{\longrightarrow} Y$ is not nullhomotopic.

Indeed, if it were, then since in spectra fiber sequences are cofiber sequences, the sum map $\vee_{n\in\mathbb{N}}Y\rightarrow Y$ would factor through $\prod_{n\in\mathbb{N}}Y$ up to homotopy.  But this is contradicted on $\pi_0$:  in abelian groups, there is no factoring of the sum map $\oplus_{n\in\mathbb{N}}\mathbb{Z}\rightarrow\mathbb{Z}$ through $\prod_{n\in\mathbb{N}}\mathbb{Z}$.  For instance, by $3$-adic considerations the element $(1,3,3^2,\ldots)$ would have to go to $-1/2$.

So infinite sums are sometimes non-unique.  It could even be that this phenomenon occurs in the case under consideration in our main theorem (Theorem \ref{product}), where we infinitely sum the different J-homomorphisms.  What our proof produces is a canonical way of summing the J-homomorphisms, together with a canonical nullhomotopy of the corresponding sum; but perhaps there's another way to sum them and get a nonzero sum.

However, this issue is a ``phantom'' one, in the technical sense that it does not arise on homotopy classes of maps from compact spectra:

\begin{proposition}\label{cpctsum} Let $\mathcal{C}$ be a stable $\infty$-category with coproducts and $\{f_i\}_{i\in I}\colon X\rightarrow Y$ a set of parallel maps in $\mathcal{C}$.  If the $\{f_i\}_{i\in I}$ can be summed to $g\colon X\rightarrow Y$, then for any compact object $K$ of $\mathcal{C}$, the maps $[K,X]\rightarrow [K,Y]$ induced by the $\{f_i\}_{i\in I}$ sum to the map $[K,X]\rightarrow [K,Y]$ induced by $g$ (this summing taking place in the category of abelian groups).
\end{proposition}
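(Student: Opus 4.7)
The plan is to reduce the statement to the standard fact that, for a compact object $K$ in a stable $\infty$-category, mapping out of $K$ turns coproducts into direct sums on $\pi_0$. The key structural input is that in a stable $\infty$-category, finite coproducts coincide with finite products, and an arbitrary coproduct $\vee_{i\in I} Y$ is the filtered colimit over finite subsets $J\subseteq I$ of the finite coproducts $\vee_{j\in J} Y$.

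First I would observe that, because $K$ is compact, $\mathrm{Map}(K,-)$ commutes with filtered colimits, so
$$\mathrm{Map}(K,\vee_{i\in I} Y)\simeq \underset{J\subseteq I\text{ finite}}{\mathrm{colim}}\mathrm{Map}(K,\vee_{j\in J}Y)\simeq \underset{J}{\mathrm{colim}}\prod_{j\in J}\mathrm{Map}(K,Y),$$
where the second equivalence uses the biproduct identification in the stable setting. Applying $\pi_0$ then yields $[K,\vee_{i\in I}Y]\simeq \bigoplus_{i\in I}[K,Y]$, and under this identification the map induced by $\varphi_i$ is projection onto the $i$-th summand, while the map induced by $\sigma$ is the usual summation homomorphism $\bigoplus_{i\in I}[K,Y]\to [K,Y]$ (since $\sigma$ is characterized by $\sigma\circ\iota_i\simeq\mathrm{id}_Y$ for each summand inclusion $\iota_i$, and summation is characterized by the analogous property on $\bigoplus$).

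Now given the data $f\colon X\to\vee_{i\in I}Y$ and homotopies $\varphi_i\circ f\simeq f_i$ witnessing that the $\{f_i\}$ sum to $g=\sigma\circ f$, take any $\alpha\in[K,X]$. The composite $f\circ\alpha\in[K,\vee_{i\in I}Y]$ corresponds under the displayed equivalence to some element $(a_i)_{i\in I}\in\bigoplus_{i\in I}[K,Y]$ with finite support, and the relations $\varphi_i\circ f\simeq f_i$ force $a_i=(f_i)_*(\alpha)$ for every $i$. In particular, $(f_i)_*(\alpha)$ vanishes for all but finitely many $i$, so the sum $\sum_{i\in I}(f_i)_*(\alpha)$ makes sense in $[K,Y]$. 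Applying $\sigma_*$ and using the identification of $\sigma_*$ with summation then gives
$$g_*(\alpha)=\sigma_*(f\circ\alpha)=\sum_{i\in I}a_i=\sum_{i\in I}(f_i)_*(\alpha),$$
which is exactly the claim.

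The main (and really only) obstacle is checking the identification of $\sigma_*$ with the summation map on $\bigoplus_{i\in I}[K,Y]$; this is a naturality/universal-property argument at the level of finite subcoproducts, after which compactness transports it across the filtered colimit. All other steps are essentially unwinding definitions.
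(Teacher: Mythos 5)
Your argument is correct and is essentially the paper's proof written out in full: the paper simply invokes the fact that for compact $K$ in a stable $\infty$-category the functor $[K,-]\colon\mathcal{C}\rightarrow Ab$ preserves coproducts, which is exactly what your filtered-colimit-of-finite-biproducts computation establishes, after which the identification of $(\varphi_i)_*$ with projections and $\sigma_*$ with summation gives the claim as you describe.
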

\begin{proof} For compact $K$ in $\mathcal{C}$ stable, the functor $[K,-]\colon \mathcal{C}\rightarrow Ab$ preserves coproducts.\end{proof}

\section{Waldhausen K-theory}\label{sdot}

We make the following definition, functionally equivalent to that of a category with cofibrations and weak equivalences from \cite{wald}.  We adopt the dual perspective of fibrations because of the nature of our applications (Appendix \ref{spheres}).

\begin{definition}\label{fib}
An $\infty$-category with fibrations $\mathcal{C}$ is a small pointed $\infty$-category $\mathcal{C}$ together with a collection of maps in $\mathcal{C}$ (called the fibrations), satisfying the following conditions:
\begin{enumerate}
\item A composition of two fibrations is a fibration, and every equivalence is a fibration;
\item The map $X\rightarrow *$ is a fibration for all $X\in\mathcal{C}$;
\item Pullbacks of fibrations by arbitrary maps exist in $\mathcal{C}$ and are fibrations.
\end{enumerate}
\end{definition}

Recall that $Ar[n]:=Fun(\Delta^1,[n])$ denotes the arrow category of the poset $[n]$, for $n\geq 0$; thus $Ar[n]$ identifies with the poset with objects the $(i,j)$ for $n\geq i\geq j\geq 0$ and where $(i,j)\geq (i',j')$ if and only if $i\geq i'$ and $j\geq j'$.  Now suppose that $\mathcal{C}$ is an $\infty$-category with fibrations.  Then we can make a simplicial $E_\infty$-space $S_\bullet\mathcal{C}$ as follows.  For $n\geq 0$, consider the full subcategory of the functor $\infty$-category
$$Fun(Ar[n],\mathcal{C})$$
consisting of those functors $X\colon Ar[n]\rightarrow\mathcal{C}$, denoted $(i\geq j)\mapsto X_{i,j}$, with the following properties:
\begin{enumerate}
\item For all $i$, the object $X_{i,i}$ is a point;
\item For all $i> j$ with $i\neq n$, the square $\xymatrix{ X_{i,j}\ar[r]\ar[d] & X_{i+1,j}\ar[d]\\
X_{i,j+1}\ar[r] & X_{i+1,j+1} }$ is a pullback with vertical maps fibrations.
\end{enumerate}
Cartesian product provides this full subcategory with a symmetric monoidal structure, and we let $S_n\mathcal{C}$ denote its $E_\infty$-space of objects (\ref{infcat}).

\begin{definition}\label{g}
If $\mathcal{C}$ is an $\infty$-category with fibrations, the Waldhausen K-theory of $\mathcal{C}$, denoted $K(\mathcal{C})$, is defined to be the connective spectrum corresponding (\ref{gp}) to the group-like $E_\infty$-space $\Omega|S_\bullet\mathcal{C}|$, loops on the geometric realization of $S_\bullet\mathcal{C}$.
\end{definition}

To see what this definition is doing, note first that $S_0\mathcal{C}$ is just a point, and that $S_1\mathcal{C}$ is the space of objects $\mathcal{C}^\sim$; thus to each $X\in\mathcal{C}^\sim$ is associated a loop $[X]\in \Omega|S_\bullet\mathcal{C}|\simeq\Omega^\infty K(\mathcal{C})$, the ``class of $X$ in K-theory''.  Moreover, $S_2\mathcal{C}$ serves to identify $[E]\simeq [B] + [F]$ whenever we have a fibration $E\rightarrow B$ with fiber $F$, and the higher $S_n\mathcal{C}$ encode coherences for these identifications.

By group completing $X\mapsto [X]$ with respect to cartesian product we get a canonical map of spectra
$$(\mathcal{C}^\sim)^{gp}\rightarrow K(\mathcal{C}).$$
It is generally not an equivalence:  $K(\mathcal{C})$ cares about all fiber sequences, not just those with an exhibited trivialization (though ``exhibited'' is sometimes not so important; see e.g.\ \cite{wald} Thm.\ 1.8.1 and \cite{gray} pp.\ 11).

\begin{definition}\label{exact}
A functor between two $\infty$-categories with fibrations is called exact if it preserves the point, the fibrations, and the pullbacks by fibrations.
\end{definition}

Thus an exact functor $F\colon \mathcal{C}\rightarrow\mathcal{D}$ induces a natural map $[F]\colon K(\mathcal{C})\rightarrow K(\mathcal{D})$.  The first fundamental result about Waldhausen K-theory is the following ``additivity theorem'' (\cite{wald} Prop.\ 1.3.2) concerning fiber sequences of exact functors. 

\begin{theorem}\label{add}
Let $\mathcal{C}$ and $\mathcal{D}$ be $\infty$-categories with fibrations, and let $F\rightarrow E\rightarrow B$ be an objectwise fiber sequence of exact functors from $\mathcal{C}$ to $\mathcal{D}$ with $E\rightarrow B$ an objectwise fibration.  Suppose that for all fibrations $X\rightarrow Y$ in $\mathcal{C}$, the map $E(X)\rightarrow B(X)\times_{B(Y)} E(Y)$ is a fibration in $\mathcal{D}$.

Then there is a canonical homotopy $[E]\simeq [F]+[B]$ of maps $K(\mathcal{C})\rightarrow K(\mathcal{D})$; and when restricted to $(\mathcal{C}^\sim)^{gp}$, this homotopy agrees with the one provided by the $E_\infty$-map $(F\rightarrow E\rightarrow B)\colon \mathcal{C}^\sim\rightarrow S_2\mathcal{D}$.
\end{theorem}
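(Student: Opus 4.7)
My plan is to follow the standard reduction-to-the-universal-case strategy underlying Waldhausen's original additivity theorem. The hypotheses on $F \to E \to B$ are precisely calibrated so that the assignment $X \mapsto (F(X) \to E(X) \to B(X))$ defines an exact functor $T \colon \mathcal{C} \to S_2\mathcal{D}$ of $\infty$-categories with fibrations, once $S_2\mathcal{D}$ is given its natural fibration structure (a map of fiber sequences counts as a fibration when each component is a fibration and the Reedy-style compatibility pullbacks are fibrations). The objectwise-fiber-sequence condition handles the object-level data, while the condition that $E(X) \to B(X) \times_{B(Y)} E(Y)$ be a fibration for any fibration $X \to Y$ in $\mathcal{C}$ is exactly what is needed so that $T$ preserves pullbacks along fibrations. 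Composing $T$ with the three exact projection functors $p_F, p_E, p_B \colon S_2\mathcal{D} \to \mathcal{D}$ recovers $F$, $E$, $B$ respectively, so the theorem reduces to the universal additivity statement $[p_E] \simeq [p_F] + [p_B]$ as maps $K(S_2\mathcal{D}) \to K(\mathcal{D})$.

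To establish the universal case, I would consider the canonical exact splitting functor $s \colon \mathcal{D} \times \mathcal{D} \to S_2\mathcal{D}$ sending $(X, Y)$ to the trivially-split fiber sequence $X \to X \vee Y \to Y$. On the image of $s$ the desired additivity is immediate, since $p_E \circ s$ is just the coproduct functor $\vee \colon \mathcal{D} \times \mathcal{D} \to \mathcal{D}$, and the $E_\infty$-structure on $K(\mathcal{D})$ is by construction the one induced from coproduct, so coherently $[X \vee Y] \simeq [X] + [Y]$. It therefore suffices to prove that the induced map $K(s) \colon K(\mathcal{D}) \times K(\mathcal{D}) \to K(S_2\mathcal{D})$ is an equivalence, with inverse $K(p_F, p_B)$. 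One direction, $K(p_F, p_B) \circ K(s) \simeq \mathrm{id}$, is immediate from $p_F \circ s = \mathrm{pr}_1$ and $p_B \circ s = \mathrm{pr}_2$.

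The main obstacle is the other direction, $K(s) \circ K(p_F, p_B) \simeq \mathrm{id}_{K(S_2\mathcal{D})}$; this is where the substance of Waldhausen's theorem really lives. I would attack it by constructing an explicit zigzag of exact natural transformations of endofunctors of $S_2\mathcal{D}$ connecting the identity to $s \circ (p_F, p_B)$, parametrized by an auxiliary finite poset whose nerve exhibits $S_2\mathcal{D}$ as sitting in a deformation retract situation, and then using the standard fact that natural transformations of exact functors yield homotopies on $|S_\bullet|$. The combinatorial bookkeeping here is subtle, and I would defer the $\infty$-categorical details to the forthcoming Barwick--Rognes treatment cited in Appendix \ref{sdot}. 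Finally, the compatibility of the resulting homotopy with the $E_\infty$-map $(F \to E \to B)\colon\mathcal{C}^\sim \to S_2\mathcal{D}$ upon restriction to $(\mathcal{C}^\sim)^{gp}$ is tautological from the construction, since every step of the reduction factored literally through this $E_\infty$-map and the universal homotopy on $S_2\mathcal{D}$ restricts to the claimed one by functoriality.
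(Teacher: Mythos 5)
The paper offers no proof of this theorem: it simply states that the argument of Waldhausen's Prop.\ 1.3.2 is ``essentially sufficient'' and that an $\infty$-categorical treatment is forthcoming in work of Barwick--Rognes. Your first two paragraphs reproduce the standard, correct skeleton of that argument: the hypotheses are exactly what is needed for $X\mapsto(F(X)\to E(X)\to B(X))$ to be an exact functor $\mathcal{C}\to S_2\mathcal{D}$, the statement reduces to the universal case $[p_E]\simeq[p_F]+[p_B]$ on $K(S_2\mathcal{D})$, and the split direction $K(p_F,p_B)\circ K(s)\simeq\mathrm{id}$ is formal. The compatibility clause on $(\mathcal{C}^\sim)^{gp}$ does come along for free once everything is routed through $S_2\mathcal{D}$, as you say.

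The gap is in your proposed attack on the remaining direction $K(s)\circ K(p_F,p_B)\simeq\mathrm{id}_{K(S_2\mathcal{D})}$. A zigzag of natural transformations of exact endofunctors of $S_2\mathcal{D}$ connecting $\mathrm{id}$ to $s\circ(p_F,p_B)$ does not exist: a generic fiber sequence $A\to C\to B$ admits no natural splitting, and no finite poset of natural transformations will deformation-retract $S_2\mathcal{D}$ onto its split objects. Moreover, the ``standard fact'' you invoke --- that natural transformations of exact functors yield homotopies on $|S_\bullet|$ --- is false for non-invertible transformations; only natural equivalences induce homotopies for free, and the assertion that a suitably exact cofibration of functors $F\hookrightarrow F'$ yields $[F']\simeq[F]+[F'/F]$ is precisely the additivity theorem, so using it here is circular. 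Waldhausen's actual proof works one simplicial level up, showing that $S_\bullet S_2\mathcal{D}\to S_\bullet\mathcal{D}\times S_\bullet\mathcal{D}$ is a weak equivalence by an explicit simplicial homotopy (or by the swallowing lemma and induction on simplicial degree); the homotopy lives on the bisimplicial object, not on the category $S_2\mathcal{D}$. Since you, like the paper, ultimately defer the coherence details to Barwick--Rognes, the deferral itself is acceptable, but the mechanism you sketch for the step you are deferring is not one that can be made to work.
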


A proof in this $\infty$-categorical context should be forthcoming in work of Barwick and Rognes.  However, the argument of \cite{wald} Prop.\ 1.3.2 is also essentially sufficient.

One source of $\infty$-categories with fibrations is the exact categories of \cite{Q}, the fibrations being the admissible epimorphisms.  In this case, the appendix to \cite{wald} gives the equivalence between the $S_\bullet$-construction and the Q-construction of \cite{Q}; thus we can cite theorems from \cite{Q} (and its follow-up article \cite{gray}) when dealing with the Waldhausen K-theory of an exact category.

\section{A mechanism for producing stable spheres}\label{spheres}
Actually, we give two such mechanisms.  But they are sometimes equivalent, and even when not, they are produced by the same general categorical formalism.

The first mechanism builds spheres from manifolds (or orbifolds, as it turns out we'll need).  The idea is simple:  if $(M,*)$ is a pointed manifold, then the (homotopy) quotient $|M|/|M-*|$ is a sphere.  (Here we use the notation $|X|$ to indicate that we are viewing the space $X$ just as a homotopy type.)  The difficultly, however, is that we also need to encode certain compatibilities this construction $M\mapsto |M|/|M-*|$ satisfies --- mainly its multiplicativity in fiber sequences of pointed manifolds $F\rightarrow E\rightarrow B$ when $E\rightarrow B$ is a submersion --- and this is finicky.

Here is the precise statement (modulo explication of our orbifold definitions, delegated to Section \ref{orbdef}):

\begin{theorem}\label{stacksphere}
Let $Orb_*$ denote the $\infty$-category (just a $2$-category, really) of pointed orbifolds.  Then $Orb_*$ becomes an $\infty$-category with fibrations (Appendix \ref{sdot}) by taking the fibrations to be the submersions, and there is a canonical map of spectra
$$Sph^{top}\colon K(Orb_*)\rightarrow Pic(Sp).$$
When restricted to the $E_\infty$-space $Man_*^\sim$ of pointed manifolds under cartesian product, this $Sph^{top}$ identifies with the $E_\infty$-map $Man_*^\sim\rightarrow Inv(Sp)$ given by
$$(M,*)\mapsto \Sigma^\infty |M|/|M-*|.$$
\end{theorem}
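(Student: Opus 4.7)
The plan is to extract $Sph^{top}$ from the sheaves-of-spectra formalism of this appendix, taking advantage of the fact that for a submersion $f$ the left adjoint $f_\natural$ of $f^*$ exists and enjoys good base-change behavior via its dualizing sheaf. The additivity theorem for Waldhausen K-theory (Theorem \ref{add}) then packages the sphere-valued assignment into a map of spectra.

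First I would verify that $Orb_*$ with submersions as fibrations satisfies Definition \ref{fib}: submersions compose, every equivalence is a submersion, the map to the terminal object is a submersion, and pullbacks of submersions along arbitrary maps of orbifolds exist and remain submersions. Preservation of basepoints under pullback is automatic since one can pull back the basepoint section of the base.

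Second, I would define $Sph^{top}$ on objects as an $E_\infty$-map $Orb_*^\sim \to Inv(Sp)$. Given a pointed orbifold $(M,*)$ with structure map $p\colon M \to \mathrm{pt}$ and basepoint $i\colon \mathrm{pt} \to M$, the stable top cell at $*$ is modelled sheaf-theoretically by the cofiber of $p_\natural j_\natural j^* S_M \to p_\natural S_M$, where $j\colon M - * \hookrightarrow M$; equivalently, by $p_\natural i_* S$. Multiplicativity under cartesian products, giving the $E_\infty$-structure, follows from the K\"unneth-type compatibility of $p_\natural$ and $i_*$ with external tensor products (Lemma \ref{shprod} and Proposition \ref{cohprop}). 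Invertibility is a local question: near $*$, an orbifold is modelled by the quotient of a representation of the isotropy group, and the local stable top cell is a (possibly equivariant) Thom spectrum, hence invertible.

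Third, I would promote this $E_\infty$-map on $Orb_*^\sim$ to a map of spectra $K(Orb_*) \to Pic(Sp)$. By the additivity theorem (Theorem \ref{add}), this reduces to supplying, for each objectwise fiber sequence $F \to E \overset{q}{\to} B$ in $Orb_*$ with $q$ a submersion, a canonical equivalence $Sph^{top}(E) \simeq Sph^{top}(B) \wedge Sph^{top}(F)$ compatible with the higher simplices of $S_\bullet Orb_*$. The key input is base change: the defining pullback square for $F$ involves a submersion, so the Beck--Chevalley transformation $i_B^* q_\natural \simeq (q|_F)_\natural i_F^*$ is an equivalence (this is exactly what dualizing-sheaf compatibility along submersions buys), and combining with the standard compatibilities of $i_*$ under pullback yields the desired splitting. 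Iteration of the same argument, done internally to the $\infty$-categorical sheaf formalism, handles the higher $S_\bullet$-simplices uniformly.

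Fourth, on restriction to smooth pointed manifolds, a tubular-neighborhood excision identifies $\Sigma^\infty |M|/|M-*|$ with $\Sigma^\infty(T_*M)^+$, matching the sheaf-theoretic construction $p_\natural i_* S$ by a direct local comparison. The main obstacle in the whole program is step three: squeezing all the required higher coherences --- everything the $S_\bullet$-construction demands --- out of the construction rather than merely getting it at the level of the underlying space of objects. This is exactly why the author delegates the work to the long Appendix \ref{spheres} and organises it around Lurie's tensor product of stable presentable $\infty$-categories, which lets one encode the sphere-valued assignment as a symmetric monoidal functor of $\infty$-categories before extracting the map on K-theory.
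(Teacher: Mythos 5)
Your object-level formula $Sph^{top}(M)=p_\natural e_*S$, the role of the base-change lemmas, and the use of Lurie's tensor product on $StPr$ all match the paper. But step three contains a genuine gap: the additivity theorem (Theorem \ref{add}) cannot play the role you assign it. That theorem compares two \emph{already-constructed} maps $K(\mathcal{C})\rightarrow K(\mathcal{D})$ induced by exact functors; it is not a device for promoting an $E_\infty$-map on $Orb_*^\sim$ together with fiber-sequence splittings to a map out of $K(Orb_*)$. To map out of $K(Orb_*)=(\Omega|S_\bullet Orb_*|)^{gp}$ you must produce compatible $E_\infty$-maps out of every $S_IOrb_*$ --- i.e.\ you must say what to do with an entire coherent flag of submersions, not just a single fiber sequence --- and ``iteration of the same argument handles the higher simplices uniformly'' is exactly the assertion that needs a mechanism. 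The paper's mechanism (Section \ref{stackpf}) is concrete: decompose $Ar(I)$ into rectangles, apply $Sh$ to get diagrams in $StPr$, use Proposition \ref{cohprop} to \emph{coherently} replace the submersion-direction pullbacks by their left adjoints (Lemma \ref{subadj}) and the section-direction pullbacks by their right adjoints (Lemma \ref{doubleadj}), restrict along the diagonal $I\subseteq Ar(I)$, and observe via Corollary \ref{invertible} that every arrow of the resulting $I$-diagram is an equivalence, so the map factors through $Fun(|I|,StPr)^\sim\simeq StPr^\sim$ because $|I|$ is contractible. Applying $\Omega$ at the basepoint $Sp$ then lands in $Inv(Sp)$. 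None of this passes through Theorem \ref{add}, which the paper only uses later, in the proof of Theorem \ref{tori}.

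Two smaller points. First, your justification of invertibility is off: the paper's orbifolds are topological, and if the basepoint has isotropy the relevant object is not an ``equivariant Thom spectrum'' (whose homotopy orbits need not be invertible in $Sp$ anyway); because the basepoint enters through the \emph{right} adjoint $e_*$, Corollary \ref{invertible} instead pulls back along an \'etale chart $\mathbb{R}^n\rightarrow M$, identifies the fiber with a finite set, and reduces via Lemma \ref{doubleadj} to the two manifold cases, where Lemma \ref{mansph} applies. Second, the Beck--Chevalley equivalence $i_B^*q_\natural\simeq (q|_F)_\natural i_F^*$ is Lemma \ref{subadj} and is proved by local reduction to projections off $\mathbb{R}^n$ and \'etale maps; the dualizing-sheaf formalism is not what ``buys'' it --- that machinery only enters later, for the comparison with $Sph^{alg}$ in Theorem \ref{unithm}.
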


This mechanism for extracting spheres from pointed manifolds is local near the point: if $U$ is an open subset of $M$ containing $*$, then there is a canonical equivalence
$$Sph^{top}(U)\simeq Sph^{top}(M).$$
(This is just excision, but it's also encoded by the above theorem, since $U\rightarrow M$ is a submersion with trivial fiber.)  In particular, a priori $Sph^{top}$ is sensitive to $M$'s topology, not just its homotopy type.

However, if we assume that $M$ is compact, then $Sph^{top}(M)$ has an alternate global description which depends only on the (pointed) homotopy type $|M|$ of $M$:  it identifies with the fiber of the inverse to the Spivak normal fibration of $|M|$ at $*$.  What's more, if $M$ is furthermore parallelizable\footnote{Even just orientable suffices, at least provided one works $H\mathbb{Z}$-locally (Proposition \ref{criterion}); and working $H\mathbb{Z}$-locally is fairly innocuous here, since spheres, suspension spectra, and spectra of maps to $S$ are all $H\mathbb{Z}$-local.}, then $Sph^{top}(M)$ can be extracted just from the (augmented) $E_\infty$-algebra in spectra
$$C^*(M):=\underline{Map}(\Sigma^\infty_+|M|,S),$$
the spectrum of maps from $|M|$ to the sphere spectrum.  Indeed, in this case, the spectrum $C_*(M):=\Sigma^\infty_+|M|$ is an invertible module over $C^*(M)$ (this is an expression of Atiyah duality), and a spectrum equivalent to $Sph^{top}(M)$ can be gotten by tensoring $C_*(M)$ up along the augmentation of $C^*(M)$ provided by the point of $M$.

This motivates our second mechanism, which will produce stable spheres from certain $E_\infty$-algebras, namely the following:

\begin{definition}\label{pdalg}
Let $\mathcal{C}$ be a presentable symmetric monoidal $\infty$-category whose tensor product preserves colimits in each variable separately.  An $E_\infty$-algebra $A$ in $\mathcal{C}$ is called a Poincar\'{e} duality algebra if the following conditions are satisfied:
\begin{enumerate}
\item The underlying object of $A$ is dualizable in $\mathcal{C}$;
\item The resulting dual $A^\vee$ is invertible as an $A$-module.
\end{enumerate}
Furthermore, a map $A\rightarrow B$ of $E_\infty$-algebras in $\mathcal{C}$ is called a Poincar\'{e} duality map if it makes $B$ into a Poincar\'{e} duality algebra in $Mod_A$.
\end{definition}

Here is the second mechanism, in the general Bousfield-local case (\ref{loc}):

\begin{theorem}\label{algsphere}
Let $L$ be a Bousfield localization of spectra, and let $PD_*$ denote the $\infty$-category of augmented Poincar\'{e} duality $E_\infty$-algebras in $LSp$.  Then $PD_*^{op}$ becomes an $\infty$-category with fibrations by taking the fibrations to be opposite to the Poincar\'{e} duality maps, and there is a canonical map of spectra
$$Sph^{alg}\colon K(PD_*^{op})\rightarrow Pic(LSp).$$
When restricted to the $E_\infty$-space $PD_*^\sim$ of augmented Poincar\'{e} duality $E_\infty$-algebras in $LSp$ under coproduct (given by tensor product), this $Sph^{alg}$ agrees with the $E_\infty$-map $PD_*^\sim\rightarrow Inv(LSp)$ given by
$$(A\rightarrow\mathbf{1})\mapsto A^\vee\otimes_A\mathbf{1}.$$
\end{theorem}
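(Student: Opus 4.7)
The plan is to construct $Sph^{alg}$ as the map of spectra induced by a symmetric monoidal $E_\infty$-functor on objects, whose coherent extension to Waldhausen's $S_\bullet$-construction is guaranteed by Lurie's tensor-product formalism for stable presentable $\infty$-categories (\cite{ha} Section 6.3).

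First, I verify that $PD_*^{op}$ satisfies Definition \ref{fib}. The terminal object of $PD_*^{op}$ is the unit $\mathbf{1}$ (initial in $PD_*$), so the axiom that $A \to *$ be a fibration becomes the condition that the unit map $\mathbf{1} \to A$ of each augmented algebra be Poincar\'e duality in $PD_*$: this is precisely Definition \ref{pdalg}. Compositions of PD maps are PD because dualizability and invertibility of the dual are transitive under base change, and pullbacks in $PD_*^{op}$ correspond to pushouts $C \otimes_A B$ in $PD_*$ along PD maps: these exist because $LSp$ is cocomplete, and base change along $A \to C$ preserves dualizability of the underlying object and invertibility of the relative dual. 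Having established the framework, the underlying $E_\infty$-functor $PD_*^\sim \to Inv(LSp)$ is given by $A \mapsto A^\vee \otimes_A \mathbf{1}$: this lands in invertibles by the PD hypothesis, and is symmetric monoidal because duality, base change, and augmentations all commute with tensor product in the obvious way.

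The main obstacle is promoting this $E_\infty$-functor on objects to a map of spectra out of $K(PD_*^{op})$. By the additivity theorem (Theorem \ref{add}), the essential content is a coherent family of multiplicative identities in cofiber sequences: for any PD map $B \to E$ with pushout $E' := \mathbf{1} \otimes_B E$ in $PD_*$ (the ``fiber'' of $E \to B$ in $PD_*^{op}$), a natural equivalence
$$Sph^{alg}(E) \simeq Sph^{alg}(B) \otimes Sph^{alg}(E').$$
At the level of underlying objects this is verifiable by direct computation: the PD hypothesis on $B$ lets one rewrite $E^\vee$ as the $B$-linear dual of $E$ tensored with $B^\vee$, after which tensoring with $\mathbf{1}$ over $E$ collapses through the factorization $B \to E \to \mathbf{1}$ to give the desired product, with $Sph^{alg}(E')$ appearing on the second factor by base change along $B \to \mathbf{1}$.

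To upgrade this object-level identification to the higher coherent data required by the desired map $S_\bullet(PD_*^{op}) \to S_\bullet(Inv(LSp))$ of simplicial $E_\infty$-spaces, I follow the paper's strategy in Section \ref{tame}: re-express the sphere construction in terms of basic adjunctions between the module $\infty$-categories $Mod_A$, organized as a symmetric monoidal functor $A \mapsto Mod_A$ from $PD_*^{op}$ into stable presentable $\infty$-categories. The essential content of the PD assumption is that the ``shriek'' left adjoint to augmentation pullback is identified with the ``star'' right adjoint up to a twist by the invertible object $Sph^{alg}(A)$ itself --- the algebraic analogue of the $\Delta_\natural \simeq \Delta_*$ mechanism used in the construction of $J_{\mathbb{F}_p}$. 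Since each adjoint functor involved preserves colimits, the required higher coherences are then generated automatically (compare Lemma \ref{shprod} and Proposition \ref{cohprop}), yielding the map $Sph^{alg}$ whose restriction to $PD_*^\sim$ matches the prescribed formula by construction.
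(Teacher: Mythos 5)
Your proposal is correct and follows essentially the same route as the paper: after checking the fibration axioms, the map is obtained from the symmetric monoidal functor $A\mapsto Mod_A$ into stable presentable $\infty$-categories, with the Poincar\'{e} duality hypothesis supplying the (colimit-preserving, invertible-twist) adjoints whose coherences are handled automatically by Proposition \ref{cohprop}; the paper makes this literal by declaring the proof identical to that of Theorem \ref{stacksphere} with $Sh$ replaced by $Mod$, after recording the lemmas that $Mod$ is symmetric monoidal, that squares of PD cobase-changes are suitably adjointable, and that $p_\natural e_*\mathbf{1}\simeq\mathbf{1}\otimes_AA^\vee$ is invertible. (One small imprecision: the additivity theorem is not the mechanism of the construction but a consequence of it, and the relevant ``shriek'' is the left adjoint of base change along the unit $\mathbf{1}\to A$, the augmentation contributing only its colimit-preserving right adjoint $e_*$; your final paragraph nonetheless lands on the correct formalism.)
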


For example, if $G$ is an $L$-locally stably dualizable group in the sense of Rognes (\cite{stdual}), then the spectrum $C^*(G)$ of maps from $G$ to $LS$ is an augmented $E_\infty$-algebra in $LSp$ (the augmentation given by the identity element of $G$), and \cite{stdual} Prop.\ 3.2.3 implies that $C^*(G)$ is in fact a Poincar\'{e} duality algebra, and that the resulting sphere $Sph^{alg}(C^*(G))$ identifies with the spectrum $S^{adG}$ of loc.\ cit.\ Def.\ 2.3.1.

Finally, here is the result about the relationship between the first formalism and the second formalism.  Again, the orbifold definitions will be given later (Section \ref{uniorb}).  The key claim is the third; the other two are just preliminary.

\begin{theorem}\label{unithm}
Let $L$ be a Bousfield localization of spectra, and let $CUO_*$ denote the full subcategory of $Orb_*$ consisting of the $L$-compact pointed orbifolds possessing $L$-unipotent duality.  Then:

\begin{enumerate}
\item $CUO_*$ becomes an $\infty$-category with fibrations by taking the fibrations to be the $L$-unipotent submersions.
\item The association $M\mapsto C^*(M)$ (here, meaning the spectrum of maps from $M$ to $LS$) gives an exact functor (Definition \ref{exact}) from $CUO_*$ to $PD_*^{op}$.
\item The composite
$$K(CUO_*)\overset{[C^*]}{\longrightarrow} K(PD_*^{op})\overset{Sph^{alg}}{\longrightarrow} Pic(LSp)$$
is canonically homotopic to the composite
$$K(CUO_*)\overset{[forget]}{\longrightarrow} K(Orb_*)\overset{Sph^{top}}{\longrightarrow} Pic(LSp).$$
\end{enumerate}
\end{theorem}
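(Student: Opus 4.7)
The plan is to handle the three parts in sequence, with the bulk of the work concentrated in part 3. Throughout, the key technical inputs are the monodromy criterion for unipotence (Proposition \ref{criterion}) and the sheaf-of-spectra formalism developed in Appendix \ref{spheres}.

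For part 1, I would verify the three axioms of Definition \ref{fib} directly. That $L$-unipotent submersions contain equivalences and compose follows from the monodromy criterion, since the relevant fiber-level conditions are clearly closed under these operations. That $M \to *$ is a fibration for every $M \in CUO_*$ is essentially built into the definition of $CUO_*$, given how $L$-compactness and $L$-unipotent duality are set up in Appendix \ref{spheres}. Pullback stability is geometric for submersions, and the monodromy criterion makes the pullback-stability of unipotence transparent, since fibers are preserved under base change.

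For part 2, the fact that $C^*(M)$ is a Poincar\'e duality $E_\infty$-algebra for $M \in CUO_*$ is essentially the content of the two conditions defining $CUO_*$: $L$-compactness of $M$ gives dualizability of $C_*(M) := (\Sigma^\infty_+ M)_L$ in $LSp$, hence of $C^*(M)$; and $L$-unipotent duality promotes the dual $C^*(M)^\vee \simeq C_*(M)$ to an invertible $C^*(M)$-module via trivialization of the dualizing sheaf. The augmentation comes from the basepoint. Exactness of $C^* \colon CUO_* \to PD_*^{op}$ then reduces to three checks: preservation of the terminal object (both sides go to the unit $\mathbf{1} = LS$); sending $L$-unipotent submersions to Poincar\'e duality maps of $E_\infty$-algebras (the relative version of the absolute statement); and sending pullback squares along $L$-unipotent submersions to pushout squares of $E_\infty$-algebras. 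This last point — base change — is precisely what fails for general submersions and is the structural reason for restricting to $L$-unipotent ones.

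Part 3 is where the real work lies, and constitutes the main obstacle. My approach would be to factor both compositions through a common description in the six-functor formalism for sheaves of $L$-local spectra of Appendix \ref{spheres}. For $M \in CUO_*$ with basepoint inclusion $i$ and projection $f\colon M \to *$, the $L$-unipotent duality hypothesis provides a canonical trivialization of the dualizing sheaf of $M$, equivalently a canonical equivalence $f_\natural \simeq f_*$ (compare the analogous formal computation in the definition of the tame $J$ in Section \ref{tame}). Combining this with the sheaf-theoretic descriptions of Theorems \ref{stacksphere} and \ref{algsphere}, one obtains a canonical equivalence
\[
Sph^{top}(M) \simeq C_*(M) \otimes_{C^*(M)} \mathbf{1} \simeq Sph^{alg}(C^*(M))
\]
natural in $M$, which handles the identification at the level of $E_\infty$-spaces of objects. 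To upgrade this to a homotopy of $K$-theoretic spectrum maps, I would use that both $Sph^{top}$ and $Sph^{alg}$ are constructed via symmetric monoidal functors out of enriched source $\infty$-categories that simultaneously encode the $E_\infty$ and fibration-sequence (base-change) data; the equivalence above is manifestly compatible with base change along $L$-unipotent submersions, so it upgrades to an equivalence of such symmetric monoidal functors, and thus to the required homotopy of $K$-theory maps. The technical heart — and the main obstacle — is arranging all of these higher coherences simultaneously, which is precisely why Appendix \ref{spheres} is organized around the tensor product of stable presentable $\infty$-categories (\cite{ha} Section 6.3).
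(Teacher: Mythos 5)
There is a genuine gap, concentrated in your part 3. You assert that ``the $L$-unipotent duality hypothesis provides a canonical trivialization of the dualizing sheaf of $M$, equivalently a canonical equivalence $f_\natural\simeq f_*$,'' by analogy with the computation for finite sets in Section \ref{tame}. This is false for the objects the theorem is actually about: $L$-unipotent duality only requires $\mathbb{D}_M$ and its smash-inverse to be \emph{unipotent}, not trivial. For a torus $T^n$, the dualizing sheaf is the constant sheaf at $\Sigma^n LS$, so $f_\natural(-)\simeq f_*(-\wedge\mathbb{D}_{T^n})\simeq \Sigma^n f_*(-)$, not $f_*$. If your claimed equivalence $f_\natural\simeq f_*$ held, then $Sph^{top}(M)=p_\natural e_*\mathbf{1}\simeq p_*e_*\mathbf{1}\simeq\mathbf{1}$ would be the unit for every $M\in CUO_*$, which already fails for $S^1$. (The analogy with Section \ref{tame} works only because finite sets are $0$-dimensional.) The same confusion appears in your part 2, where you say invertibility of $C_*(M)$ over $C^*(M)$ comes ``via trivialization of the dualizing sheaf''; the correct mechanism is that $\mathbb{D}_M$ is an invertible object of $Sh(M)$ lying, together with its inverse, in the subcategory $Sh^{uni}(M)\simeq Mod_{C^*(M)}$, and it corresponds there to $p_*\mathbb{D}_M\simeq p_\natural\mathbf{1}=C_*(M)$.

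Beyond this error, your part 3 misses the structural maneuver that makes the comparison work. The embedding $Mod_{C^*(M)}\simeq Sh^{uni}(M)\hookrightarrow Sh(M)$ is \emph{not} compatible with the functors $p_\natural$ and $e_*$ used to build $Sph^{top}$: the skyscraper $e_*S$ at the basepoint is not a unipotent (or even locally constant) sheaf, so the composite $p_\natural e_*$ simply cannot be computed inside $Sh^{uni}(M)$. The paper's proof therefore first replaces $LSph^{top}$ by its smash-inverse, realized by the \emph{left} adjoints $e^*$ and $p^\natural$ (the double left adjoint of $p^*$, which exists by $L$-compactness via $p_\natural(-)\simeq p_*(-\wedge\mathbb{D}_M)$); these do preserve unipotent sheaves (Lemma \ref{unidouble}), so the whole coherent construction restricts to $Sh^{uni}$ and can then be matched term-by-term with the module-theoretic construction of $(Sph^{alg})^{-1}$ via Lemma \ref{translation}. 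Without passing to inverses you have no way to confine the construction to $Mod_{C^*(M)}$, and your proposed ``upgrade to an equivalence of symmetric monoidal functors'' has nothing to get started on. A minor further point: your appeal to Proposition \ref{criterion} in part 1 is out of place, since that criterion applies only to $L=L_{HR}$, whereas the theorem concerns arbitrary Bousfield localizations; the closure properties you need follow instead from Lemma \ref{subadj} and the characterization of $L$-unipotent submersions as those $f$ for which $f_\natural$ preserves unipotent sheaves.
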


The second claim is an Eilenberg-Moore-type result; with Proposition \ref{criterion}, c.f.\ \cite{shipley} Thm.\ 3.1.

Now, before embarking on the proofs, we say a word about the methods that will be employed.  Both mechanisms for producing spheres will factor through a theory of $\infty$-categories of ``sheaves'' or ``representations'' associated to the objects under consideration:  in the first case, to each orbifold $M$ is associated its $\infty$-category $Sh(M)$ of sheaves of spectra, with contravariant ``pullback'' functoriality for maps of orbifolds; in the second case, to each Poincar\'{e} duality algebra $A$ is associated its $\infty$-category $Mod_A$ of modules, with covariant ``base-change'' functoriality for maps of algebras.

The first mechanism will be described in terms of these $\infty$-categories of sheaves as follows.  The the pullback $p^*\colon Sh(M)\rightarrow Sh(*)$ along the projection $p\colon M\rightarrow *$ has a left adjoint $p_\natural$, and the pullback $e^*\colon Sh(M)\rightarrow Sh(*)$ along the inclusion $e\colon *\rightarrow M$ of the point has a right adjoint $e_*$ which preserves all colimits.  The composition
$$p_\natural e_*\colon Sh(*)\rightarrow Sh(*)$$
is therefore a colimit-preserving functor from $Sp$ to itself.  Every such functor is equivalent to smashing with some fixed and uniquely specified spectrum, and here this spectrum is the desired $Sph^{top}(M)$.  (This method of identifying $Sph^{top}(M)$, fundamental for our formalism here, was based on a suggestion of Nick Rozenblyum in a related context.)

Thus the association $M\mapsto Sph^{top}(M)$ is reinterpreted as $M\mapsto p_\natural e_*$, the advantage being that now the desired multiplicative compatibility of $Sph^{top}$ in fiber sequences $M'\overset{i}{\longrightarrow} M\overset{f}{\longrightarrow} M''$ with $f$ a submersion can be seen as having its origin in a natural equivalence $f_\natural i_*\simeq e''_*p'_\natural$ of functors associated to the pullback square
$$\xymatrix{M'\ar[r]^{i}\ar[d]^{p'} & M\ar[d]^{f} \\
\ast\ar[r]^{e''} & M'' }$$
of orbifolds.  Indeed, such an equivalence gives rise to an identification of the functor $p_\natural e_*$ with the composition $(p''_\natural e''_*)\circ (p'_\natural e'_*)$, and composition and smash product agree for colimit-preserving functors $Sp\rightarrow Sp$.

An important point is that this equivalence $f_\natural i_*\simeq e''_*p'_\natural$ does not need to be explicitly produced.  In fact, the equivalence $i^*f^*\simeq p'^*e''^*$ coming just from the commutativity of the above square formally determines a map $p'_\natural i^*\rightarrow e''^*f_\natural$; this map turns out to be an equivalence, and its inverse again formally determines a map $f_\natural i_*\rightarrow e''_*p'_\natural$, and it is this precise map that is claimed to be an equivalence.  By the same token, these equivalences also automatically satisfy a number of compatibilities, for instance bearing on what happens when one composes similar such squares horizontally or vertically.

Thus it turns out that the whole structure of the first mechanism for producing spheres already exists in the (symmetric monoidal) functoriality of the association $M\mapsto Sh(M)$; to obtain Theorem \ref{stacksphere} one needs only verify that certain maps are invertible (namely, we need invertibility of the functor $p_\natural e_*$ as well as of the two natural transformations described in the above paragraph).

The second mechanism (Theorem \ref{algsphere}) is produced in exactly the same way starting from the theory of $\infty$-categories of modules over Poincar\'{e} duality algebras.  Furthermore, this categorical perspective also makes transparent the connection between the two mechanisms (Theorem \ref{unithm}).  Indeed, there is a natural fully faithful embedding $Mod_{C^*(M)}\rightarrow Sh(M)$, coming from the fact that the spectrum of endomorphisms of the unit object of $Sh(M)$ identifies with $C^*(M)$; and while this embedding is not a priori compatible with the functors $p_\natural$ and $e_*$, in the necessary cases it is compatible with their left adjoints, and these determine $Sph^{top}(M)$ just as well.

Now we begin with our orbifold definitions.

\subsection{Orbifold definitions}\label{orbdef}

We will take a toposistic approach, so our orbifolds will be certain topoi.  Even though classical topoi suffice for this purpose, we will instead use $\infty$-topoi, because these more immediately connect to their $\infty$-categories of sheaves of spectra.

\begin{definition} Let $f\colon X\rightarrow Y$ be a map of $\infty$-topoi.  We say that $f$ is finite if, locally on $Y$, it factors as a closed immersion (\cite{htt} Section 7.3.2) followed by projection off a finite set.
\end{definition}

The class of finite maps is closed under composition and base change, and a map between finite maps is also finite.

\begin{definition} We say that an $\infty$-topos $X$ is an orbifold if the following conditions are satisfied:
\begin{enumerate}
\item $X$ locally equivalent to some $\mathbb{R}^n$ (more properly, to the $\infty$-category of sheaves of spaces on some $\mathbb{R}^n$);
\item The diagonal $X\rightarrow X\times X$ is finite.
\end{enumerate}
\end{definition}

Here is the basic example of an orbifold:

\begin{example}
If $M$ is a topological manifold and $G$ is a (discrete) group acting properly discontinuously on $M$, then the quotient $M/G$ (i.e.\ $G$-equivariant sheaves of spaces on $M$) is an orbifold.
\end{example}

(We adopt the convention that topological manifolds are assumed Hausdorff.)

\begin{definition}
We say that a map of orbifolds $X\rightarrow Y$ is a submersion if, locally on $X$, as projection off some $\mathbb{R}^n$ followed by an \'{e}tale map (\cite{htt} Section 6.3.5) to $Y$.
\end{definition}

For example, if $M$ is any orbifold, then the map $p\colon M\rightarrow *$ is a submersion; also, the class of submersions is closed under composition and pullback.

Now we get to work on Theorem \ref{stacksphere}.  The fact that the $\infty$-category $Orb_*$ of pointed orbifolds becomes an $\infty$-category with fibrations where the fibrations are the submsersions is clear from the definitions, so it's all about producing the map of spectra
$$Sph^{top}\colon K(Orb_*)\rightarrow Pic(Sp)$$
and checking what it does on pointed manifolds.  For this, we will pass from orbifolds to their $\infty$-categories of sheaves of spectra.  In the $\infty$-topos language, this is accomplished by means of the functor
$$Sh\colon Orb\rightarrow StPr^{op}$$
which assigns to each orbifold the stabilization (c.f.\ \cite{ha} Ex.\ 6.3.1.22) of its underlying presentable $\infty$-category, with the contravariant pullback functoriality.

Here $StPr$ denotes the $\infty$-category of stable presentable $\infty$-categories, with morphisms the colimit preserving functors.  We'll have to get pretty warm and fuzzy with this $\infty$-category of $\infty$-categories, so here are a few definitions and results.

\subsection{Adjointability in $StPr$}\label{adjstpr}

\begin{definition}\label{adjoint}
Let $f\colon \mathcal{M}\rightarrow\mathcal{N}$ be a map in $StPr$.  We say that $f$ is left adjointable if it has a left adjoint, and we say that $f$ is right adjointable if its right adjoint preserves colimits.
\end{definition}

The point is that in both cases the resulting adjoint can also be viewed as a map in $StPr$.

For the next lemma, recall (\cite{ha} Section 6.3, esp.\ Prop.\ 6.3.2.15) that $StPr$ carries a canonical symmetric monoidal structure $\otimes$ having the property that, for $\mathcal{M},\mathcal{N}$ and $\mathcal{P}$ in $StPr$, the $\infty$-category of colimit preserving functors $\mathcal{M}\otimes \mathcal{N}\rightarrow \mathcal{P}$ is equivalent to the $\infty$-category of functors $\mathcal{M}\times\mathcal{N}\rightarrow \mathcal{P}$ which preserve colimits in each variable separately.  This tensor structure $\otimes$ on $StPr$, used in \cite{ha} to produce and develop the smash product of spectra, will be our main tool for defining the $E_\infty$-structures underlying Theorems \ref{stacksphere} and \ref{algsphere}.

\begin{lemma}\label{adjtens}
The class of left adjointable morphisms in $StPr$ is closed under $\otimes$.  Same with ``right'' instead of ``left''.
\end{lemma}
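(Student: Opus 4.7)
The plan is to directly construct the adjoint of $f_1\otimes f_2$ by tensoring the individual adjoints. In the left adjointable case, write $g_i\colon \mathcal{N}_i\to\mathcal{M}_i$ for the left adjoint of $f_i$; since $g_i$ is a left adjoint it preserves all colimits, hence is itself a morphism in $StPr$, so $g_1\otimes g_2$ makes sense in $StPr$. I claim it is left adjoint to $f_1\otimes f_2$. The right adjointable case is parallel: the hypothesis is precisely that the right adjoint $h_i$ of $f_i$ lies in $StPr$, so $h_1\otimes h_2$ is defined in $StPr$; existence of a right adjoint to $f_1\otimes f_2$ in the ambient $\infty$-category of presentable $\infty$-categories is automatic, so the content is that it preserves colimits, which drops out of its identification with $h_1\otimes h_2$.

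The construction of the adjunctions proceeds by tensoring units and counits. The unit $\eta_i\colon\mathrm{id}_{\mathcal{N}_i}\to f_ig_i$ is a morphism in the $\infty$-category $\mathrm{Fun}^L(\mathcal{N}_i,\mathcal{N}_i)$ of colimit-preserving endofunctors. Since $StPr$ is closed symmetric monoidal (\cite{ha} 6.3) with internal Hom precisely $\mathrm{Fun}^L$, the monoidal structure refines to a pairing
$$\mathrm{Fun}^L(\mathcal{N}_1,\mathcal{N}_1)\times\mathrm{Fun}^L(\mathcal{N}_2,\mathcal{N}_2)\longrightarrow\mathrm{Fun}^L(\mathcal{N}_1\otimes\mathcal{N}_2,\mathcal{N}_1\otimes\mathcal{N}_2)$$
sending $(\mathrm{id},\mathrm{id})\mapsto\mathrm{id}$ and $(f_1g_1,f_2g_2)\mapsto (f_1\otimes f_2)(g_1\otimes g_2)$, using the bifunctoriality of $\otimes$ on composition of $1$-morphisms in $StPr$. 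Applying this pairing to $(\eta_1,\eta_2)$ yields the desired unit, and tensoring the counits the same way yields the counit. The triangle identities follow by applying $\otimes$ to those of the $(g_i\dashv f_i)$.

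The main obstacle is the rigorous $\infty$-categorical treatment of ``tensoring natural transformations''. In ordinary $2$-category theory it would be automatic that a symmetric monoidal bifunctor preserves adjunctions, but in an $(\infty,1)$-category the units, counits, and triangle identities are specified only as points and higher cells in mapping spaces, and one must extract from the monoidal structure on $StPr$ a coherent action of $\otimes$ on these mapping spaces and on the cells inhabiting them. The cleanest route is through the closed structure of $StPr$: composition and evaluation maps for the internal Hom $\mathrm{Fun}^L$ supply the required higher input, and once this compatibility is in place, both halves of the lemma reduce to the single statement that the symmetric monoidal bifunctor $\otimes$ on $StPr$ preserves adjunctions.
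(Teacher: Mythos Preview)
Your proposal is correct and follows essentially the same approach as the paper: tensor the adjoints, units, counits, and triangle identities, noting that all functors involved preserve colimits so that the tensor makes sense in $StPr$. The paper phrases the justification as ``the universal property of the tensor structure on $StPr$'' while you invoke the closed structure via $\mathrm{Fun}^L$, but these are equivalent formulations of the same mechanism.
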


\begin{proof}
We only discuss ``left'', since ``right'' is just the same.  Suppose that $f$ is left adjointable with left adjoint $g$, and ditto $f'$ with left adjoint $g'$.  Then there are natural transformations $id\rightarrow fg$ and $gf\rightarrow id$ such that the compositions $f\rightarrow fgf\rightarrow f$ and $g\rightarrow gfg\rightarrow g$ are homotopic to the identity, and ditto for $f'$ and $g'$.  But all these functors preserve colimits, so the universal property of the tensor structure on $StPr$ implies that we can tensor these relations together, showing that $g\otimes g'$ is left adjoint to $f\otimes f'$.  Thus a fortiori $f\otimes f'$ is left adjointable, as desired.
\end{proof}

In just a second we will prove a strengthened replacement of this lemma (Proposition \ref{cohprop}); but first another definition, concerning how adjointable maps interact with commutative squares (c.f.\ \cite{ha} Def.\ 6.2.3.13):

\begin{definition}\label{adjsq} Let $\sigma\colon \Delta^1\times\Delta^1\rightarrow StPr$ be a map, parametrizing a diagram
$$\xymatrix{
\mathcal{M'}\ar[r]^-{g'}\ar[d]^-{f'} & \mathcal{M}\ar[d]^-{f}\\
\mathcal{N'}\ar[r]^-{g} & \mathcal{N}
}$$
in $StPr$ which commutes up to a specified homotopy $\alpha\colon gf'\simeq fg'$.

We say that $\sigma$ is vertically left adjointable if $f$ and $f'$ are left adjointable, say with left adjoints $F$ and $F'$ respectively, and the canonical map $Fg\rightarrow g'F'$ given by
$$Fg\rightarrow Fgf'F'\overset{\alpha}{\simeq} Ffg'F'\rightarrow g'F'$$
is an equivalence.

Similarly we can define what it means for $\sigma$ to be vertically right adjointable, or horizontally right or left adjointable.
\end{definition}

Thus if $\sigma$ is vertically left adjointable, we get a new commutative square in $StPr$ where the vertical maps are replaced by their left adjoints, and similarly for the three other variations.

We caution that, for convenience, the modifiers ``vertically'' and ``horizontally'' will always refer to the manner in which the square is drawn on the page, and don't necessarily correlate with the first and second coordinates of $\Delta^1\times\Delta^1$.

Now we turn to a refined version of Lemma \ref{adjtens}, asserting not only that passing to left adjoints of left adjointable maps is compatible with $\otimes$, but also that this passage can be made coherent with respect to both $\otimes$ and composition in $StPr$ (simultaneously, even).  Furthermore, this coherence needs to be encoded not just for morphisms in $StPr$, but also for squares in $StPr$ as in Definition \ref{adjsq}.  This may seem like an overwhelming amount of coherence data to specify, but the following definition (an immediate analog of \cite{ha} Def.\ 6.2.3.16) permits an efficient encoding.

\begin{definition}\label{adjcoh}
Let $K$ be a simplicial set.  We denote by $Fun^{LAd}(K,StPr)$ the $\infty$-subcategory of $Fun(K,StPr)$ whose objects are those $F\colon K\rightarrow StPr$ such that $F(i)\rightarrow F(j)$ is left adjointable for all maps $i\rightarrow j$ in $K$, and whose morphisms are those natural transformations $F\rightarrow G$ for which the square
$$\xymatrix{
F(i)\ar[r]\ar[d] & F(j)\ar[d]\\
G(i)\ar[r] & G(j)
}$$
is horizontally left adjointable for all $i\rightarrow j$ in $K$.

We can similarly define the $\infty$-subcategory $Fun^{RAd}(K,StPr)$ of $Fun(K,StPr)$.
\end{definition}

Note that, by virtue of Lemma \ref{adjtens} (and its analog for squares, which can be proved in the same way), these $\infty$-categories $Fun^{LAd}(K,StPr)$ and $Fun^{RAd}(K,StPr)$ inherit the tensor product from $StPr$.  Furthermore, they are contravariantly functorial in $K$ via pullback.

Now here is the relevant coherence statement.

\begin{proposition}\label{cohprop}
Let $K$ be a simplicial set.  Then there is a canonical symmetric monoidal equivalence
$$Fun^{LAd}(K,StPr)\simeq Fun^{RAd}(K^{op},StPr),$$
functorial in $K$ for pullbacks.
\end{proposition}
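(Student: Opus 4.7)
The plan is to unstraighten both sides of the desired equivalence and identify them with a common intermediate $\infty$-category of bicartesian fibrations. Let $\mathcal{BC}(K)$ denote the $\infty$-category whose objects are functors $p\colon\mathcal{E}\to K$ that are simultaneously cocartesian and cartesian fibrations, with each fiber lying in $StPr$ and with both the covariant and contravariant transition functors preserving colimits. Morphisms are functors over $K$ that preserve both cocartesian and cartesian edges. The strategy is to produce two straightening equivalences, $\mathcal{BC}(K)\simeq Fun^{LAd}(K,StPr)$ and $\mathcal{BC}(K)\simeq Fun^{RAd}(K^{op},StPr)$, and then compose them.

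For the first equivalence, I would invoke cocartesian straightening (\cite{htt} Theorem 3.2.0.1). On objects, the content is the standard observation that a functor $F\colon K\to StPr$ has all structure maps left adjointable if and only if its unstraightening, in addition to its cocartesian fibration structure, admits cartesian lifts along every edge of $K$; this uses the $\infty$-categorical adjoint functor theorem of \cite{htt} fiberwise on edges. On morphisms, the key point is the mate correspondence: given a morphism of bicartesian fibrations that already preserves cocartesian edges (corresponding to a natural transformation $F\to G$), preservation of cartesian edges is equivalent to horizontal left adjointability of each resulting square in the sense of Definition \ref{adjsq}. Applying exactly the same argument to $K^{op}$, using cartesian rather than cocartesian straightening, yields the second equivalence $\mathcal{BC}(K)\simeq Fun^{RAd}(K^{op},StPr)$. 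Functoriality in $K$ for pullbacks is automatic because straightening is itself natural in the base.

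To upgrade the composite to a symmetric monoidal equivalence, I would use the universal property of the tensor product on $StPr$ (\cite{ha} Section 6.3), namely that a colimit-preserving functor out of $\mathcal{M}\otimes\mathcal{N}$ is the same as a functor from $\mathcal{M}\times\mathcal{N}$ preserving colimits separately in each variable. The coherent version of Lemma \ref{adjtens}, which asserts that passage to (left or right) adjoints commutes with $\otimes$ both for morphisms and for squares, then shows that the pointwise tensor product on $Fun^{LAd}(K,StPr)$ transports across both straightening equivalences to a canonical tensor structure on $\mathcal{BC}(K)$ and agrees with the pointwise tensor product on $Fun^{RAd}(K^{op},StPr)$. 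Thus the composite equivalence is automatically symmetric monoidal, in a way that is natural in $K$.

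The main obstacle is the mate correspondence at the level of morphisms: one must verify carefully that, for a square as in Definition \ref{adjsq} whose vertical maps are left adjointable, the horizontal left adjointability condition is genuinely equivalent to the corresponding map of unstraightened bicartesian fibrations preserving cartesian edges. This is where the $\infty$-categorical coherences differ most from their $1$-categorical shadows; I would handle it by appealing to Lurie's treatment of adjunctions in families in \cite{ha} Section 6.2.3, which provides precisely the mate machinery needed. Once the morphism-level matching is pinned down, the symmetric monoidal upgrade and functoriality in $K$ are essentially formal consequences of the universal property of $\otimes$ and the naturality of straightening.
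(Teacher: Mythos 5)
Your proposal is correct and follows essentially the same route as the paper: the paper's proof also passes through the intermediate category of bicartesian fibrations over $K$ (packaged there as a fibrant simplicial colored operad $\mathbf{O}_K$ whose multi-maps encode the tensor structure via the universal property of $\otimes$) and identifies it with the two sides by Cartesian and coCartesian straightening, citing the same adjointability machinery from \cite{ha} Sections 6.2.3 and 6.3. The only cosmetic difference is that you transport the pointwise symmetric monoidal structure across the straightening equivalences, whereas the paper builds it directly into the colored operad; these amount to the same thing.
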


The proof will show that on the level of objects the asserted equivalence is given by passing to left adjoints of all the maps in the corresponding $K$-diagram in $StPr$, and that on maps it does similarly for squares; thus this statement does encode the required coherences.

\begin{proof}
This is almost contained in \cite{ha}; it suffices to combine the material of \cite{ha} Section 6.2.3 with that of \cite{ha} Section 6.3.  More precisely, let $\mathbf{O}_K$ denote the fibrant simplicial colored operad defined as follows:
\begin{enumerate}
\item An object of $\mathbf{O}_K$ is a map of simplicial sets $p\colon X\rightarrow K$ with the following properties:
\begin{enumerate}
\item $p$ is both a Cartesian fibration and a coCartesian fibration;
\item For each $k$ in $K_0$, the fiber $\infty$-category $X_k$ is stable and presentable;
\item For each $k\rightarrow k'$ in $K_1$, the functor $X_{k'}\rightarrow X_{k}$ induced by the Cartesian condition and the functor $X_k\rightarrow X_{k'}$ induced by the coCartesian condition both preserve all colimits.
\end{enumerate}
\item For a finite collection $\{p_i\colon X_i\rightarrow K\}_{i\in I}$ of objects of $\mathbf{O}_K$ with fiber product $p\colon X\rightarrow K$ and an object $q\colon Y\rightarrow K$ of $\mathbf{O}_K$, the simplicial set of multi-maps $\{p_i\}_{i\in I}\rightarrow q$ in $\mathbf{O}_K$ is the sub-simplicial set of $Map_K(X,Y)$ spanned by those $f\colon X\rightarrow Y$ over $K$ with the following properties:
\begin{enumerate}
\item $f$ sends $p$-Cartesian edges to $q$-Cartesian edges and $p$-coCartesian edges to $q$-coCartesian edges;
\item For each $k$ in $K_0$, the induced functor on fibers $f_k\colon \prod_{i\in I}(X_i)_k\rightarrow Y_k$ preserves all colimits in each variable separately.
\end{enumerate}
\end{enumerate}
Then via Cartesian straightening, the operadic nerve (\cite{ha} Prop.\ 2.1.1.27) of $\mathbf{O}_K$ identifies with $Fun^{RAd}(K^{op},StPr)$; and via coCartesian straightening, it identifies with $Fun^{LAd}(K,StPr)$ (c.f.\ \cite{ha} Cor.\ 6.3.1.4).  This gives the required equivalence of symmetric monoidal $\infty$-categories, and the functoriality in $K$ is evident from the construction.\end{proof}

\subsection{Orbifold-to-sheaf lemmas}\label{orb2shf}

With this $StPr$ background at hand, we now prove the lemmas that glean what we need from the functor $Sh\colon Orb\rightarrow StPr^{op}$ of sheaves of spectra on orbifolds.

\begin{lemma}\label{shprod}
The functor $Sh\colon Orb\rightarrow StPr^{op}$ is symmetric monoidal for cartesian product on the source and $\otimes$ on the target.
\end{lemma}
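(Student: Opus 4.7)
My plan is to factor $Sh$ as a composition $Orb \hookrightarrow \mathcal{T}opoi \to \mathcal{P} \to StPr^{op}$, where $\mathcal{T}opoi$ denotes $\infty$-topoi with geometric morphisms, $\mathcal{P}$ denotes presentable $\infty$-categories with colimit-preserving functors (appearing here contravariantly via the opposite of the forgetful functor), and the last leg is stabilization $\mathcal{C} \mapsto Sp \otimes \mathcal{C}$. The first leg is symmetric monoidal for cartesian structures because cartesian products of orbifolds coincide with products of topoi (checked locally in the $\mathbb{R}^n$-model). The last leg is symmetric monoidal from the presentable tensor product on $\mathcal{P}$ to $\otimes$ on $StPr$ by \cite{ha} Section 4.8.2, since $Sp$ is an idempotent $E_\infty$-algebra in $\mathcal{P}$. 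So the weight of the proof rests on the middle leg.

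For the middle leg, one has a canonical comparison map $\mu_{X,Y}\colon Sh_{\mathcal{S}}(X) \otimes Sh_{\mathcal{S}}(Y) \to Sh_{\mathcal{S}}(X \times Y)$ produced by the universal property of the presentable tensor product applied to the pullbacks $\pi_X^*$, $\pi_Y^*$ along the two projections. I would show $\mu_{X,Y}$ is an equivalence by local-to-global reduction: orbifolds are locally equivalent to $\mathbb{R}^n$ or to quotients of such by finite group actions, and both sides of the proposed equivalence behave functorially under pullback to an atlas. In the Euclidean local model, the statement $Sh_{\mathcal{S}}(\mathbb{R}^n) \otimes Sh_{\mathcal{S}}(\mathbb{R}^m) \simeq Sh_{\mathcal{S}}(\mathbb{R}^{n+m})$ reduces via the Yoneda embedding to the standard fact that the product of an open-set basis for $\mathbb{R}^n$ with one for $\mathbb{R}^m$ is a basis for $\mathbb{R}^{n+m}$, with matching coverings.

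The main obstacle is twofold. First, one needs to coherently promote these pointwise equivalences into a symmetric monoidal structure on $Sh$, rather than settling for an equivalence $\mu_{X,Y}$ for each pair. For this I would use the operadic formalism illustrated in Proposition \ref{cohprop} and \cite{ha} Section 6.3: encode the whole situation via a fibration over $N(\mathrm{Fin}_*)$ parameterizing candidate tensor products, and invoke the universal property of $\otimes$ in $StPr$ to manufacture the coherences automatically from the local data together with the Beck-Chevalley compatibility of pullbacks. Second, the presentable tensor product does not commute with general limits, so the naive descent argument collapses; this is sidestepped by passing to $StPr^{op}$, where descent for sheaves becomes a colimit of pullback functors with which $\otimes$ on $StPr^{op}$ is compatible through its description in terms of functors preserving colimits in each variable separately.
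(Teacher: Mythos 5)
Your proposal is correct and follows the same decomposition as the paper: an unstable statement that product of ($\infty$-)topoi corresponds to tensor product of their presentable sheaf categories, followed by the fact that stabilization is symmetric monoidal. The paper simply cites \cite{ha} Remark 6.3.1.18 for the middle leg you prove by hand via descent, and \cite{ha} Prop.\ 6.3.2.15 for the last leg, so your argument is a (reasonable) expansion of the paper's two citations rather than a different route.
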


\begin{proof}
This is a general fact about $\infty$-topoi:  combine \cite{ha} Remark 6.3.1.18 (the analogous statement for sheaves of spaces) with \cite{ha} Prop.\ 6.3.2.15 (implying that stabilization is symmetric monoidal).
\end{proof}

Tracing things through, we see that the claimed equivalence $Sh(X)\otimes Sh(Y)\simeq Sh(X\times Y)$ corresponds to the external smash product of sheaves of spectra.

The next lemma says what's important about submersions for our present purposes.

\begin{lemma}\label{subadj} Suppose that $f\colon M\rightarrow N$ is a submersion of orbifolds, and that $f'\colon M'\rightarrow N'$ is a pullback $f$ via some map $g\colon N'\rightarrow N$.  Then the square
$$\xymatrix{ Sh(M') & Sh(M)\ar[l]_{g'^*}\\
Sh(N')\ar[u]^-{f'^*} & Sh(N)\ar[l]_{g^*}\ar[u]^-{f^*}}$$
is vertically left adjointable.
\end{lemma}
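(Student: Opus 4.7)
The plan is to verify both the existence of the left adjoints $f_\natural, f'_\natural$ and the Beck--Chevalley equivalence locally, ultimately reducing to two elementary types of submersion: étale inclusions and projections off $\mathbb{R}^n$. Since sheaves of spectra satisfy descent (a consequence of the defining property of $\infty$-topoi together with the fact that stabilization preserves limits), the entire claim can be checked after replacing $N$ (with the corresponding pullback $N'$) by the pieces of an open cover, and after further replacing $M$ (with the corresponding $M'$) by the pieces of an open cover. Using the local structure of submersions, I may thus arrange that $f$ factors as $p \circ j$, with $j \colon M \to \mathbb{R}^n \times N$ étale and $p \colon \mathbb{R}^n \times N \to N$ the projection; pulling back along $g \colon N' \to N$ yields a factorization of the same kind for $f'$. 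Vertical left adjointability of base-change squares is stable under horizontal pasting (the Beck--Chevalley natural transformation for the composite decomposes formally as a pasting of the individual ones), so it suffices to handle the two cases separately.

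For $f=j$ étale, $j$ exhibits $Sh(M)$ as a slice of $Sh(N)$ in the sense of \cite{htt} Section 6.3.5, so $j^*$ has a left adjoint $j_\natural$ given by extension along the open inclusion, and Beck--Chevalley against arbitrary base change is immediate from this pointwise description together with stability of the class of étale maps under pullback. For $f=p$ a projection, Lemma \ref{shprod} provides a symmetric monoidal identification $Sh(\mathbb{R}^n \times Y) \simeq Sh(\mathbb{R}^n) \otimes Sh(Y)$ natural in $Y$, under which the base-change square for $p$ becomes the $\otimes$-product, in $StPr$, of the morphism $g^*\colon Sh(N) \to Sh(N')$ with the square associated to $\mathbb{R}^n \to \ast$. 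Once the constant-sheaf functor $c \colon Sp \to Sh(\mathbb{R}^n;Sp)$ is shown to admit a left adjoint, the desired vertical left adjointability of the tensored square follows automatically from the compatibility of left adjoints with $\otimes$ supplied by Lemma \ref{adjtens} and Proposition \ref{cohprop}.

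The main obstacle is therefore producing a left adjoint to $c$. I would establish this by exploiting that $\mathbb{R}^n$ admits a cofinal directed system of contractible open cubes; in the stabilized setting this compactly generates $Sh(\mathbb{R}^n;Sp)$ by constant sheaves on such cubes, and the formula $F \mapsto \mathrm{colim}_{U} F(U)$ indexed over this directed system defines a left adjoint to $c$ by direct verification on the generators. After this single substantive step, all remaining claims follow formally from the already-invoked $StPr$-functoriality of $Sh$ and the coherence supplied by Proposition \ref{cohprop}.
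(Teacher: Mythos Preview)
Your overall strategy coincides with the paper's: localize to reduce to the \'etale case and the projection-off-$\mathbb{R}^n$ case, dispatch the \'etale case directly, and for the projection case use Lemma~\ref{shprod} to factor the base-change square as a tensor product, so that Proposition~\ref{cohprop} reduces everything to the single assertion that $p^*\colon Sp \to Sh(\mathbb{R}^n)$ is left adjointable.

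The gap is in your proposed argument for that last step. The functor $F \mapsto \operatorname{colim}_U F(U)$ over an increasing exhaustion by contractible cubes cannot be a left adjoint: each $F \mapsto F(U)=\Gamma(U,F)$ is a \emph{right} adjoint and fails to preserve colimits, and a filtered colimit of such does not repair this. Concretely, for $V\subset\mathbb{R}^n$ a small ball and $F=(j_V)_!\mathbf{1}$, one has $p_\natural F \simeq \Sigma^\infty_+|V|\simeq S$, whereas your formula yields the eventual value $\Gamma(U_\alpha,(j_V)_!\mathbf{1})\simeq \operatorname{fib}\bigl(S \to S^{|U_\alpha\setminus V|}\bigr)\simeq \Sigma^{-n}S$, off by a shift. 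Relatedly, the compact-generation claim is not right as stated: a single increasing chain of cubes does not give enough objects $(j_U)_!\mathbf{1}$ to generate $Sh(\mathbb{R}^n;Sp)$; one needs a basis of opens, and then the indexing category is no longer directed in the way your formula uses.

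The paper handles this step by citing shape theory: $\mathbb{R}^n$ is locally of constant shape (\cite{ha} Remark~A.1.4), and \cite{ha} Proposition~A.1.8 then supplies the left adjoint $p_\natural$ directly. That black box is the substantive input here, and your sketch does not provide an adequate substitute.
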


\begin{proof}
The claim is local on $M$, so we can reduce to two separate cases: one where $f$ is projection off some $\mathbb{R}^n$, and the other where $f$ is \'{e}tale.  The second case is trivial.  In the first case, by Lemma \ref{shprod} the above square identifies with the tensor product of the following two squares:
$$\xymatrix{Sh(N') & Sh(N)\ar[l]_{g^*} & & & Sh(\mathbb{R}^n) & Sh(\mathbb{R}^n)\ar@{=}[l] \\
Sh(N')\ar@{=}[u] & Sh(N)\ar[l]_{g^*}\ar@{=}[u] & & & Sh(*)\ar[u]^-{p^*} & Sh(*)\ar@{=}[l]\ar[u]^-{p^*}}$$ 
Thus by Proposition \ref{cohprop} we reduce to showing that $p^*$ is left adjointable.  But $\mathbb{R}^n$ is locally of constant shape (\cite{ha} Remark A.1.4), so this follows from \cite{ha} Proposition A.1.8.
\end{proof}

When $f$ is a submersion, we will denote the above-guaranteed left adjoint to $f^*$ by $f_\natural$.  Note (\cite{ha} Rmk.\ A.1.10) that if $M$ is an orbifold and $p\colon M\rightarrow\ast$ is the projection to the point, then we can identify
$$p_\natural \mathbf{1}\simeq \Sigma^\infty_+|M|,$$
where $\mathbf{1}=p^*(S)$ denotes the constant sheaf with values in the sphere spectrum and on the right we mean the suspension spectrum of the homotopy type (shape) of $M$.  This is indeed more ``natural'' than $p_*\mathbf{1}\simeq\underline{Map}(\Sigma^\infty_+|M|,S)$, justifying the choice of subscript in the notation $f_\natural$.

As their proofs show, the above two lemmas hold even on the level of sheaves of spaces.  In contrast, the following result requires stabilization.

\begin{lemma}\label{doubleadj} Let $i\colon N'\rightarrow N$ be a finite map of orbifolds, and let $i'\colon M'\rightarrow M$ be a pullback of $i$ by a submersion $f\colon M\rightarrow N$.  Then the square
$$\xymatrix{
Sh(M')\ar[d]^-{f'_\natural} & Sh(M)\ar[l]_-{i'^*}\ar[d]^-{f_\natural} \\
Sh(N') & Sh(N)\ar[l]_-{i^*}
}$$
(produced by the vertical left adjointability claim in  Lemma \ref{subadj}) is horizontally right adjointable.\end{lemma}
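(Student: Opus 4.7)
Horizontal right adjointability amounts to two conditions: (i) the horizontal maps $i^*$ and $i'^*$ admit right adjoints $i_*, i'_*$ in $StPr$ (that is, colimit-preserving right adjoints), and (ii) the canonical Beck--Chevalley map arising from the specified square is an equivalence.

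For (i), since finiteness is local on the target, we may assume locally that $i$ factors as a projection off a finite set followed by a closed immersion. For the finite-projection factor, $i^*$ is restriction to a finite product of summands; its right adjoint is the corresponding finite product, which in the stable setting coincides with the finite coproduct and hence preserves all colimits. For a closed-immersion factor, the required input is that pushforward along a closed immersion of orbifolds preserves all colimits of sheaves of spectra. This uses stability crucially, the analog for sheaves of spaces being false. It can be deduced from the recollement associated with the closed/open pair: in the stable setting the pushforward $i_*$ agrees with the proper-support pushforward $i_!$, which admits a further right adjoint $i^!$ (six-functor formalism), so $i_*$ is itself a left adjoint and hence preserves all colimits.

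For (ii), I would argue via the mate correspondence. The underlying diagram of orbifolds is a pullback, so the induced square of pullback functors $i^*, i'^*, f^*, f'^*$ on $Sh$ commutes up to canonical equivalence, and the vertical Beck--Chevalley equivalence supplied by Lemma \ref{subadj} is by construction the mate of this commutativity under the adjunctions $f_\natural \dashv f^*$ and $f'_\natural \dashv f'^*$. Once the right adjoints $i_*, i'_*$ exist in $StPr$, the horizontal Beck--Chevalley map is in turn the mate of the same commutativity under the adjunctions $i^* \dashv i_*$ and $i'^* \dashv i'_*$; the general fact that the mate of an equivalence is an equivalence then forces it to be an equivalence as well. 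Equivalently, one may trace through the construction directly: the horizontal Beck--Chevalley corresponds, under all four adjunctions, to the tautological commutativity $f \circ i' \simeq i \circ f'$, both sides being pullback along the composite $M' \to N$.

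The principal obstacle is the closed-immersion case of (i). This is the one step where passing from sheaves of spaces to sheaves of spectra is genuinely required, and it calls on a proper base-change or recollement input in the stable $\infty$-topos framework. Granted this input, the rest of the argument is formal manipulation using mates and the coherence machinery of Section \ref{adjstpr} (in particular Proposition \ref{cohprop}).
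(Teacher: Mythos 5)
Your part (i) --- producing colimit-preserving right adjoints $i_*,i'_*$ by localizing on $N$ and splitting $i$ into a closed immersion and a projection off a finite set --- is essentially the paper's reduction, and both of your justifications (finite products agree with finite coproducts in the stable setting; $i_*\simeq i_!$ admits a further right adjoint in the stable recollement) are fine. The gap is in part (ii). The assertion that ``the mate of an equivalence is an equivalence'' is not a general fact: the mate construction composes the given $2$-cell with units and counits of the relevant adjunctions, and this does \emph{not} preserve invertibility. This is exactly why Beck--Chevalley/base-change statements are genuine conditions rather than formalities --- e.g.\ for an open immersion $j$ in place of $i$, the analogous map $f_\natural j'_*\rightarrow j_* f'_\natural$ can fail even when all four adjoints exist. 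Your alternative phrasing (``the horizontal Beck--Chevalley corresponds, under all four adjunctions, to the tautological commutativity'') has the same problem: transposing a $2$-cell across adjunctions does not transfer invertibility. So as written, the base-change equivalence $f_\natural i'_*\rightarrow i_* f'_\natural$, which is the actual content of the lemma, is not proved.

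The paper closes this by a case-by-case check after the same local reduction. For $i$ a closed immersion with open complement $j$, the functors $i^*$ and $j^*$ are jointly conservative and colimit-preserving, so one verifies the Beck--Chevalley map is an equivalence after applying them (and their base changes along $f$), using $i^*i_*\simeq id$ and $j^*i_*\simeq 0$ --- the latter being where stability enters. For $i$ a projection off a finite set $S$, the square decomposes as a tensor product (via Lemma \ref{shprod}) of the constant-in-one-direction square on $Sh(S)\leftarrow Sh(*)$ with the constant-in-the-other-direction square on $f_\natural$, and Proposition \ref{cohprop} reduces the adjointability of the tensor product to that of the factors, i.e.\ to $p_*$ preserving colimits. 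You should replace your mate argument with one of these explicit verifications (or an equivalent one); the rest of your write-up then goes through.
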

\begin{proof}
The claim is local on $N$ (using the previous lemma), so we can reduce to two separate cases:  one where $i$ is a closed immersion, and the other where $i$ is projection off a finite set $S$.  For the first case, if $j$ denotes the inclusion of the open complement, then $i^*$ and $j^*$ together detect equivalences, so we can check the desired conclusions after applying them; but then the equations $j^*i_*=0$ and $i^*i_*=id$, as well as the same ones for the base-changes of $i$ and $j$ by $f$, make the claims trivial.  In the second case, our square is the tensor product of the following two squares:
$$\xymatrix{ Sh(S) & Sh(*)\ar[l]_-{p^*}\ar@{=}[d]  & & & Sh(M)\ar[d]^-{f_\natural}\ar@{=}[r] & Sh(M)\ar[d]^-{f_\natural} \\ 
Sh(S)\ar@{=}[u] & Sh(*)\ar[l]_-{p^*} & & & Sh(N)\ar@{=}[r] & Sh(N) }$$
Thus Proposition \ref{cohprop} reduces us to showing that $p^*$ is right adjointable.  But its right adjoint $p_*$ is just given by taking finite products, and these agree with finite coproducts in the stable setting, so $p_*$ does preserve colimits, as desired.\end{proof}

Recall that we are particularly interested in the functor $p_\natural e_*\colon Sh(*)\rightarrow Sh(*)$ associated to a pointed orbifold $*\overset{e}{\longrightarrow} M\overset{p}{\longrightarrow} *$.  Since this is a colimit-preserving functor from spectra to itself, it is uniquely determined by its value on the sphere spectrum $S$:  we naturally have $p_\natural e_*X\simeq X\wedge p_\natural e_*S$ for all $X\in Sp$.  Thus the next lemma calculates $p_\natural e_*$ when $M$ is a pointed manifold:

\begin{lemma}\label{mansph}  Let $e\colon *\rightarrow M$ be a pointed topological manifold, and let $p\colon M\rightarrow *$.  Then there is a natural equivalence of spectra
$$p_\natural e_*S\simeq \Sigma^\infty |M|/|M-\ast|.$$
\end{lemma}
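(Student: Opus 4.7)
The plan is to exploit the closed--open recollement on $M$. Let $j\colon U := M - *\hookrightarrow M$ denote the open immersion complementary to $e$; this is an \'etale submersion, so by the theory of \'etale morphisms of $\infty$-topoi the left adjoint $j_\natural$ to $j^*$ is fully faithful and serves as the ``extension by zero'' functor $j_!$ (in particular, $e^* j_! = 0$ since the images of $e$ and $j$ are disjoint). Applying the recollement to the constant sheaf $\mathbf{1}_M := p^* S$ should produce a cofiber sequence in $Sh(M)$
$$j_!\mathbf{1}_U \longrightarrow \mathbf{1}_M \longrightarrow e_* S,$$
using that $e^*\mathbf{1}_M \simeq S$ and $j^*\mathbf{1}_M \simeq \mathbf{1}_U$.

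To verify this is a cofiber sequence, I would check after applying the jointly conservative pair $(j^*, e^*)$. Applying $j^*$ gives $\mathbf{1}_U \to \mathbf{1}_U \to 0$, using $j^*j_!\simeq \mathrm{id}$ (fully faithfulness) and $j^*e_* = 0$ (disjoint images). Applying $e^*$ gives $0 \to S \to S$, using the dual facts $e^*j_! = 0$ and $e^*e_* \simeq \mathrm{id}$. In the stable setting this suffices.

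Now I would apply the colimit-preserving functor $p_\natural$ (Lemma \ref{subadj}) to obtain a cofiber sequence of spectra
$$p_\natural j_!\mathbf{1}_U \longrightarrow p_\natural \mathbf{1}_M \longrightarrow p_\natural e_*S.$$
Using $j_! = j_\natural$ and functoriality of $(-)_\natural$ under composition we have $p_\natural j_! \simeq (p \circ j)_\natural$. The identification $q_\natural \mathbf{1}_N \simeq \Sigma^\infty_+|N|$ recorded after Lemma \ref{subadj}, applied to both $p\colon M\to *$ and $p\circ j\colon M-*\to *$, rewrites the left two terms as $\Sigma^\infty_+ |M-*| \to \Sigma^\infty_+ |M|$. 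The cofiber of this map is manifestly $\Sigma^\infty(|M|/|M-*|)$, yielding the claimed equivalence, and naturality in $(M,*)$ is evident from the construction.

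The main obstacle is the first step: establishing the recollement cofiber sequence in $Sh(M)$ within the $\infty$-topos framework of the paper. The computation reduces, via the conservativity of $(j^*,e^*)$ on stable sheaves and the base-change identities used above, to the single fact that $e^* j_! = 0$; this in turn follows from $j_! = j_\natural$ (\'etale) and the disjointness of the images of $e$ and $j$. The remaining identifications are formal consequences of functoriality and of facts already in hand from Section \ref{adjstpr} and Lemma \ref{subadj}.
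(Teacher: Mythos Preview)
Your proposal is correct and follows essentially the same argument as the paper: both use the recollement cofiber sequence $j_\natural j^*\to \mathrm{id}\to e_*e^*$ evaluated on $p^*S$, then apply $p_\natural$ and identify the first two terms via $p_\natural\mathbf{1}\simeq \Sigma^\infty_+|{-}|$. The paper simply asserts the cofiber sequence as a standard recollement fact, whereas you spell out its verification via the jointly conservative pair $(j^*,e^*)$; otherwise the proofs are identical.
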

\begin{proof}
Let $j$ denote the inclusion of the open complement $M-\ast$ in $M$.  Evaluating the cofiber sequence
$$j_\natural j^*\rightarrow id\rightarrow e_*e^*$$
on $p^*S$ and then applying $p_\natural$ gives the cofiber sequence
$$\Sigma^\infty_+|M-\ast|\rightarrow \Sigma^\infty_+ |M|\rightarrow p_\natural e_* S,$$
yielding the desired.
\end{proof}

In particular, since the space $|M|/|M-\ast|$ is equivalent to a sphere, we see that the functor $p_\natural e_*$ is an equivalence when $M$ is a pointed manifold.  But in fact this holds for an arbitrary pointed orbifold:

\begin{corollary}\label{invertible}
Let $e\colon *\rightarrow M$ be a pointed orbifold, and let $p\colon M\rightarrow *$ denote the projection.  Then the functor $p_\natural e_*\colon Sh(*)\rightarrow Sh(*)$ is an equivalence.
\end{corollary}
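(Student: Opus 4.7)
The plan is to reduce the claim to a local model using an excision-type argument, and then compute directly in that model.

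First, for any open sub-orbifold $j: U \hookrightarrow M$ containing the image of $e$, we can factor $e = j \circ e'$ for a unique $e': * \to U$, and Lemma \ref{subadj} gives a left adjoint $j_\natural$ to $j^*$ which is fully faithful (as is standard for open immersions). A short diagram chase then identifies $e_* \simeq j_\natural e'_*$, and composing with $p_\natural$ and using $p_\natural j_\natural = (pj)_\natural = p'_\natural$ yields $p_\natural e_* \simeq p'_\natural e'_*$. By the local structure of orbifolds, we may therefore assume $M = V/G$, where $V$ is a finite-dimensional real vector space, $G$ is a finite subgroup of $GL(V)$ fixing $0 \in V$, and $e$ picks out the image of $0$ (with full isotropy $G$).

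For $M = V/G$, factor $p = \bar p \circ q$ and $e = i \circ \bar e$, where $q: V/G \to BG$ is induced by $V \to *$, $i: BG \hookrightarrow V/G$ is the inclusion of the stacky origin, $\bar p: BG \to *$, and $\bar e: * \to BG$ is the universal cover. Then $p_\natural e_* \simeq \bar p_\natural \circ (q_\natural i_*) \circ \bar e_*$. The key intermediate claim is that $q_\natural i_*: Sh(BG) \to Sh(BG)$ naturally identifies with $S^V \otimes (-)$, where $S^V \in Sp^{BG}$ is the one-point compactification of the $G$-representation $V$. This follows by applying the recollement fiber sequence $j_\natural j^* \to id \to i_* i^*$ in $Sh(V/G)$ (with $j: (V-0)/G \hookrightarrow V/G$ the open complement) to the unit $\mathbf{1}_{V/G}$ and then to $q_\natural$, yielding the cofiber sequence
$$\Sigma^\infty_+ |V-0| \to \Sigma^\infty_+ |V| \to q_\natural i_* \mathbf{1}_{BG}$$
in $Sp^{BG}$, whose cofiber is $S^V$ since $V$ is contractible. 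Projection formulae for $q$ and $i$ then upgrade this identification at $\mathbf{1}_{BG}$ to one of functors.

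Finally, $\bar e_* S$ is the cofree $G$-spectrum $Map(G_+, S)$, which is equivalent to the free $G$-spectrum $G_+ \wedge S$ because finite products and coproducts agree in spectra, and so
$$p_\natural e_* S \simeq \bar p_\natural(S^V \wedge G_+ \wedge S) \simeq (S^V \wedge G_+)_{hG} \wedge S \simeq S^V \simeq \Sigma^{\dim V} S,$$
where the penultimate equivalence uses that $G_+$ is a free $G$-spectrum. Since $\Sigma^{\dim V} S$ is invertible, $p_\natural e_*$ is an equivalence on $S$, hence (being a colimit-preserving functor $Sp \to Sp$) an equivalence altogether. The main technical obstacle is the setup and verification of the orbifold recollement and projection formulae for the stacky closed immersion $i: BG \hookrightarrow V/G$; these should follow from general $\infty$-topos principles applied to the quotient-stack description $Sh(V/G) \simeq Sh(V)^G$.
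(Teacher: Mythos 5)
Most of your individual steps are sound and correspond to tools the paper actually provides: your excision step is precisely Lemma \ref{doubleadj} applied to the square with $i=e$ and $f=j$ the open immersion (the pullback of $e$ along $j$ being $\ast$ again); the recollement cofiber sequence you invoke is the one already used in the proof of Lemma \ref{mansph}, and the two projection formulae (for the submersion $q$ and the finite map $i$) are Corollary \ref{extraadj}; the identification of the cofree $G$-spectrum $\bar e_*S$ with the free one $\bar e_\natural S=G_+\wedge S$ is the base-change equivalence underlying Proposition \ref{bgshf} (note it needs Lemma \ref{doubleadj}, not just the bare fact that finite products and coproducts of spectra agree, since the identification has to be $G$-equivariant); and $(S^V\wedge G_+)_{hG}\simeq S^{\dim V}$ is correct and invertible.

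The genuine gap is the very first reduction, to the local model $M=V/G$ with $G$ a finite subgroup of $GL(V)$ and $e$ the stacky origin. In this paper an orbifold is just an $\infty$-topos that is locally equivalent to some $\mathbb{R}^n$ and has finite diagonal; nothing in that definition, and nothing proved in the paper, supplies a neighborhood of the point of the form $W/G$ for $G$ finite, let alone a linearized one $V/G$. That local quotient presentation is a nontrivial structure theorem even for classical proper \'etale groupoids, and the linearization step is genuinely problematic in the topological category, where a finite group action on $\mathbb{R}^n$ fixing a point need not be conjugate to a linear one. (Your argument would survive with $W/G$ for $W$ a pointed manifold, since all you really use about $q_\natural i_*\mathbf{1}$ is that it is the cofiber of $\Sigma^\infty_+|W-w_0|\rightarrow\Sigma^\infty_+|W|$; but the existence of the presentation $W/G$ would still have to be proved.) The paper's own proof is arranged specifically to avoid any such structure theorem: it pulls the \'etale chart $\mathbb{R}^n\rightarrow M$ back along $e$ rather than the other way around, observes that the resulting fiber $F$ is a nonempty finite discrete set (this is where finiteness of the diagonal enters), and then Lemma \ref{doubleadj} applied to that pullback square exhibits $p_\natural e_*$ precomposed with the analogous functor for the pointed finite set $F$ as the analogous functor for pointed $\mathbb{R}^n$; both of the latter are equivalences by Lemma \ref{mansph}, since $F$ and $\mathbb{R}^n$ are honest manifolds. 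If you want to keep your computational route through $BG$, you must first either establish the local quotient structure in the paper's topos-theoretic setting or find a way around it.
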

\begin{proof}
Choose an etale map $\mathbb{R}^n\rightarrow M$ whose pullback $F$ by $e$ is nonempty.  Then $F$ is both $0$-localic (being finite over the $0$-localic $\mathbb{R}^n$) and a slice topos of the $\infty$-category of spaces $\mathcal{S}$ (being etale over $*$); thus $F$ is equivalent to a (nonempty) discrete set.

Now, choose a point of $F$.  Then Lemma \ref{doubleadj} applied to the pullback square
$$\xymatrix{ F\ar[r]\ar[d] & \mathbb{R}^n\ar[d] \\
\ast\ar[r] & M }$$
shows that $p_\natural e_*$, when precomposed with the analogous composition for $*\rightarrow F$, identifies with the analogous composition for $*\rightarrow \mathbb{R}^n$; thus we are reduced to $M=F$ and $M=\mathbb{R}^n$.  But in both these cases $M$ is a manifold, so the previous lemma lets us conclude.
\end{proof}

The above argument is ad hoc; the later Lemma \ref{duallemma} gives an alternate, more systematic approach.

\subsection{Proof of Theorem \ref{stacksphere}}\label{stackpf}

Now we have enough lemmas to prove Theorem \ref{stacksphere}.  Thus let us produce a map of spectra
$$Sph^{top}\colon K(Orb_*)\rightarrow Pic(Sp).$$
By group completion, it is enough to give an $E_\infty$-map $\Omega |S_\bullet Orb_*|\rightarrow Inv(Sp)$.  In turn, by applying $\Omega$ it is enough to give an $E_\infty$-map $|S_\bullet Orb_*|\rightarrow StPr^\sim$ (where the latter gets its $E_\infty$-structure from the tensor product).  Finally, giving such a map $|S_\bullet Orb_*|\rightarrow StPr^\sim$ is equivalent to giving a system of $E_\infty$-maps
$$S_I Orb_*\rightarrow StPr^\sim,$$
functorial in $I\in\Delta^{op}$; so this is what we will do.

Recall (Section \ref{sdot}) that $S_IOrb_*$ is the $E_\infty$-space of objects in a certain full subcategory of $Fun(Ar(I),Orb_*)$.  We start with the observation that $Ar(I)$ is canonically built out of rectangles.  This is ``obvious'' from how it is drawn in the plane, but formally we can say that there is a category $AR(I)$, a functor $AR(I)\rightarrow \Delta\times \Delta$, and an isomorphism of simplicial sets
$$\operatorname{colim}_{AR(I)}\Delta^n\times\Delta^m\simeq Ar(I),$$
all of this functorial in $I$.  Indeed, we can let $AR(I)$ be the Grothendieck construction on the bisimplicial set $\Delta^{op}\times\Delta^{op}\rightarrow Sets$ given by
$$([n],[m])\mapsto \{f\colon [n]\rightarrow I,g\colon [m]\rightarrow I\text{ }\mid\text{ }f(x)\geq g(y) \text{ for all }x\in [n],y\in [m]\}.$$

Thus we can canonically write $Fun(Ar(I),Orb_*)$ as a limit of the $\infty$-categories $Fun(\Delta^n\times\Delta^m,Orb_*)$, and similarly $S_IOrb_*$ is canonically a limit (over $a\in AR(I)$, say) of the space of objects $S_a$ in certain full subcategory of $Fun(\Delta^n\times\Delta^m,Orb_*)$.  Then Lemma \ref{shprod} shows that applying $Sh$ gives an $E_\infty$-map from $S_a$ to the space of objects $S'_a$ in a certain full subcategory of $Fun((\Delta^n)^{op}\times(\Delta^m)^{op},StPr)$.  But by Lemma \ref{subadj}, everything in $S'_a$ has the property that all of the little squares in the corresponding $StPr$-diagram are left adjointable in the $m$-direction; thus we can use Proposition \ref{cohprop} to coherently pass to left adjoints in the $m$-direction in these diagrams, thereby producing a map from $S'_a$ to the space of  objects $S''_a$ in a certain full subcategory of $Fun((\Delta^n)^{op}\times\Delta^m,StPr)$, this being functorial in $\Delta^n$ and $\Delta^m$ and hence in $a$.  And by the same token, because of Lemma \ref{doubleadj} we can further map to the space of objects $S'''_a$ in a certain full subcategory of $Fun(\Delta^n\times \Delta^m,StPr)$.

Thus, taking the limit of these maps $S_a\rightarrow S'''_a$ over $a\in AR(I)$, we have produced an $E_\infty$-map
$$S_IOrb_*\rightarrow Fun(Ar(I),StPr)^\sim$$
functorially in $I\in\Delta^{op}$.   Just to summarize, this map encodes the procedure of passing to sheaves of spectra in an $Ar(I)$-diagram in $Orb_*$, then replacing the vertical maps by their left adjoints and the horizontal maps by their right adjoints.  Now we further map to $Fun(I,StPr)^\sim$ by restricting along the inclusion $I\rightarrow Ar(I)$ of the identity morphisms.  Then by Lemma \ref{invertible}, every arrow in $I$ goes to an equivalence in $StPr$ under each of the diagrams $I\rightarrow StPr$ in the image of this map $S_IOrb_*\rightarrow Fun(I,StPr)^\sim$; thus this latter canonically factors through an $E_\infty$-map
$$S_IOrb_*\rightarrow Fun(|I|,StPr)^\sim$$
where $|\cdot |$ denotes geometric realization, and evidently this is still functorial in $I\in\Delta^{op}$.  However, the geometric realization of any simplex $I$ is contractible, so this last map gives exactly what we wanted.

Thus we have produced the desired map of spectra $Sph^{top}\colon K(Orb_*)\rightarrow Pic(Sp)$.  Then all that rests of Theorem \ref{stacksphere} is the ancillary claim concerning the value of $Sph^{top}$ on pointed manifolds;  but this follows directly from Lemma \ref{mansph}, bearing in mind that the $E_\infty$-ness of the resulting identification is a consequence of Proposition \ref{cohprop} and the fact that all the functors used in the proof of Lemma \ref{mansph} preserve colimits.

This finishes the proof; but let us also record the following consequence, which makes explicit reference to our chosen formalism for encoding $M\mapsto Sph^{top}(M)$:

\begin{proposition}\label{bgshf}
Let $Gp^\sim$ denote the $E_\infty$-space of finite groups under cartesian product.  Then the following two trivializations of the map $G\mapsto Sph^{top}(BG)$ from $Gp^\sim$ to $Inv(Sp)$ coincide:
\begin{enumerate}
\item First, the one induced by using the fiber sequences $G\rightarrow *\rightarrow BG$ and $*\rightarrow *\rightarrow G$ to trivialize $[BG]$ in $\Omega^\infty K(Orb_*)$ and hence $Sph^{top}(BG)$ in $Inv(Sp)$;
\item Second, letting $*\overset{e}{\longrightarrow} BG\overset{p}{\longrightarrow} *$ denote the important maps, the one given by the equivalence of spectra
$$Sph^{top}(BG)=p_\natural e_*(S)=\left(\prod_GS\right)_G\simeq \left(\vee_G S\right)_G\simeq S,$$
where $G$ in a subscript stands for (homotopy) $G$-orbits, and the middle equivalence comes from the identification of finite coproducts and finite products in the stable setting (manifested here in an equivlanece between $e_\natural$ and $e_*$).
\end{enumerate}
\end{proposition}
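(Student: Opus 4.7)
The plan is to exhibit both trivializations of $Sph^{top}(BG) \simeq S$ as different manifestations of the same underlying norm equivalence $e_\natural \simeq e_*$ for the finite map $e \colon * \to BG$, translated through the sheaf-theoretic formalism of this appendix, where $Sph^{top}(M)$ is realized via the functor $p_\natural e_* \colon Sh(*) \to Sh(*)$ applied to $S$.

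First I would reformulate trivialization 2 in functorial terms. By construction, it is the composite
\[
p_\natural e_* S \xrightarrow{\sim} p_\natural e_\natural S = (pe)_\natural S = S,
\]
where the first equivalence is induced by the canonical norm equivalence $e_\natural \simeq e_*$, which holds because the finite map $e \colon * \to BG$ has trivial dualizing sheaf (its diagonal $* \to * \times_{BG} * = G$ being a finite monomorphism, whence $\Delta_\natural \simeq \Delta_*$ as in the discussion preceding Lemma \ref{shprod}).

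Next I would reformulate trivialization 1 in the same language. The fiber sequence $G \to * \to BG$ assembles into an element of $S_2 Orb_*$ with $X_{10} = G$, $X_{20} = *$, $X_{21} = BG$. Under the construction in the proof of Theorem \ref{stacksphere}, this produces a $[2]$-diagram in $StPr^\sim$ whose arrows (after restricting to the diagonal of $Ar([2])$) are $Sph^{top}(G) = p'_\natural e'_*$, $Sph^{top}(BG) = p_\natural e_*$, and the identity $Sph^{top}(*) = \mathrm{id}$, together with a coherence identifying the composition of the first two with the third. That coherence is produced by Lemma \ref{doubleadj} applied to the pullback square $G = * \times_{BG} *$. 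Similarly, the fiber sequence $* \to * \to G$ gives a (tautological) identification $p'_\natural e'_* \simeq \mathrm{id}$, since everything there is already discrete. Combining yields a second equivalence $p_\natural e_* S \simeq S$.

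The main step is to check these two equivalences agree. The key observation is that both are built from the same norm equivalence $e_\natural \simeq e_*$: trivialization 2 invokes it directly, while trivialization 1 invokes Lemma \ref{doubleadj}, which in turn is proved using the same norm equivalence via the universal property of adjoints and the triviality of the relevant dualizing sheaves. Verifying the agreement thus reduces to unpacking the construction of Lemma \ref{doubleadj} and identifying the specific natural transformations at play. The $E_\infty$-coherence over $Gp^\sim$ is automatic from Proposition \ref{cohprop}, which packages the adjoint manipulations symmetric monoidally. The hardest part is explicitly tracking the coherence data through the $S_\bullet$-construction at the level of the $Ar([2])$-diagrams; this is conceptually transparent but involves some careful bookkeeping of horizontal versus vertical adjoints.
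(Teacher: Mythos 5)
Your proposal is correct and follows essentially the same route as the paper: both trivializations are traced back to the single norm equivalence $e_\natural\simeq e_*$, with the first trivialization's incarnation of it obtained by applying Lemma \ref{doubleadj} to the self-pullbacks of $*\rightarrow G$ and $*\rightarrow BG$ and the verification being an unwinding of those adjointability squares. The paper's proof is exactly this "simple unwinding," stated with the same level of detail you give.
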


\begin{proof} This is a simple unwinding: the point is that the equivalence $e_*\simeq e_\natural$ used in the second trivialization is equivalent to the one gotten by the following procedure:  first apply Lemma \ref{doubleadj} to the pullback of $i\colon *\rightarrow G$ by itself to obtain a canonical equivalence $i_\natural\simeq i_*$, then apply Lemma \ref{doubleadj} to the pullback of $e\colon *\rightarrow BG$ by itself and evaluate the resulting right adjointed square on $i_*\simeq i_\natural$ to deduce the claimed equivalence $e_*\simeq e_\natural$.\end{proof}

\subsection{Spheres from Poincar\'{e} duality algebras}\label{sphalg}

Now we mimic the above to prove Theorem \ref{algsphere}.  A difference is that we fix a Bousfield localization $L$ of spectra, and, instead of working in the $\infty$-category $StPr$ of stable presentable $\infty$-categories (\ref{adjstpr}), we work in the $\infty$-category $LPr$ of $L$-local stable presentable $\infty$-categories, meaning the module objects over $LSp$ in $StPr$.  Since $LSp$ is idempotent (\ref{loc}), this $\infty$-category $LPr$ is a full subcategory of $StPr$ and inherits the symmetric monoidal tensor product as such; one then sees that all the definitions and results of Section \ref{adjstpr} (on adjointability of morphisms and squares) carry over to this setting.

Those preliminaries spoken, we start with the lemmas.  The $\infty$-category of Poincar\'{e} duality algebras in $LSp$ (Def.\ \ref{pdalg}) will be denoted by $PD$; thus the Bousfield localization $L$ is implicit.

\begin{lemma}\label{modprod}
The functor $Mod\colon PD\rightarrow LPr$ which sends a PD-algebra $A$ in $LSp$ to its $\infty$-category $Mod_A$ of modules is symmetric monoidal for coproduct on the source and tensor product on the target.
\end{lemma}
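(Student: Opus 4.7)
\textbf{Proof plan for Lemma \ref{modprod}.} The plan is to realize the asserted symmetric monoidal structure as the restriction to $PD$ of a known symmetric monoidal structure on the full modules functor $\operatorname{Mod}\colon CAlg(LSp) \to LPr$. The only content specific to Poincar\'{e} duality algebras will be the verification that $PD \subset CAlg(LSp)$ is closed under the relevant symmetric monoidal operation.

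First I would recall that the coproduct of $E_\infty$-algebras in any symmetric monoidal $\infty$-category coincides with the tensor product of the underlying objects; so coproduct in $CAlg(LSp)$, and therefore in the full subcategory $PD$, is computed by the tensor product in $LSp$. I then have to check that if $A, B \in PD$, their tensor product $A \otimes B$ is again in $PD$. The underlying object $A \otimes B$ is dualizable in $LSp$ because dualizability is preserved under tensor products, with dual canonically identified with $A^\vee \otimes B^\vee$. To see that $A^\vee \otimes B^\vee$ is invertible as a module over $A \otimes B$, I would observe that it is the external tensor product over $LS$ of the invertible $A$-module $A^\vee$ and the invertible $B$-module $B^\vee$, and that tensor products of invertibles are invertible (with inverse the corresponding tensor product of inverses). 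Hence $PD$ is stable under the symmetric monoidal operation on $CAlg(LSp)$, and any symmetric monoidal functor out of $CAlg(LSp)$ restricts along the inclusion $PD \hookrightarrow CAlg(LSp)$ to a symmetric monoidal functor out of $PD$.

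Second, I would invoke the general fact, established in \cite{ha} Section 4.8.5 (in particular Theorem 4.8.5.16) and the surrounding material on the tensor product of module categories, that the formation of modules does promote to a symmetric monoidal functor from $CAlg(LSp)$, with its coCartesian (=~coproduct) symmetric monoidal structure, to $LPr$ with its tensor product structure. Concretely, this packages the coherent equivalences $\operatorname{Mod}_{A \otimes B} \simeq \operatorname{Mod}_A \otimes_{LSp} \operatorname{Mod}_B$, natural in $A$ and $B$. Restricting along $PD \hookrightarrow CAlg(LSp)$ yields the claim.

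No step of this is hard: the closure of $PD$ under $\otimes$ is immediate from the formal properties of dualizability and invertibility, while the symmetric monoidality of $\operatorname{Mod}$ itself is a citation. If one insisted on proving the latter from scratch, the real obstacle would be producing the coherent equivalence $\operatorname{Mod}_{A \otimes B} \simeq \operatorname{Mod}_A \otimes_{LSp} \operatorname{Mod}_B$ — which rests on the fact that $\operatorname{Mod}_A$ is $LSp$-dualizable with dual $\operatorname{Mod}_A$ itself — but this is precisely what is packaged by Lurie's theorem, and we do not redo it here.
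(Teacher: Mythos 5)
Your proposal is correct and matches the paper's proof, which is simply the observation that the Poincar\'{e} duality hypothesis plays no role together with a citation to Lurie's result that $A\mapsto Mod_A$ is symmetric monoidal (\cite{ha} Remark 6.3.5.15 in the edition cited by the paper; your Section 4.8.5 reference is the same material in a later numbering). Your additional check that $PD$ is closed under the coproduct (tensor product) of $E_\infty$-algebras is a worthwhile point that the paper leaves implicit, and your argument for it is fine.
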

In fact, the Poincar\'{e} duality hypothesis plays no role here, and the same result holds with $LSp$ replaced by a fairly general symmteric monoidal $\infty$-category:
\begin{proof}
See \cite{ha} Remark 6.3.5.15.
\end{proof}

For a map $f\colon A\rightarrow B$ of $E_\infty$-agebras in $LSp$, we let $f^*\colon Mod_A\rightarrow Mod_B$ denote the functor of co-base change, i.e.\ $f^*(M)=M\otimes_AB$.

\begin{lemma} Let $f\colon A\rightarrow B$ be a Poincar\'{e} duality map of $E_\infty$-algebras in $LSp$, and let $f'\colon A'\rightarrow B'$ be a co-base change of $f$ via some map $g\colon A\rightarrow A'$.  Then the square
$$\xymatrix{Mod_{B'} & Mod_{B}\ar[l]_-{g'^*} \\
Mod_{A'}\ar[u]^-{f'^*} & Mod_A\ar[l]_-{g^*}\ar[u]^-{f^*} }$$
is vertically left adjointable, and the resulting square
$$\xymatrix{Mod_{B'}\ar[d]^-{f'_\natural} & Mod_B\ar[l]_-{g'^*}\ar[d]^-{f_\natural} \\
Mod_{A'} & Mod_A\ar[l]_-{g^*} }$$
is horizontally right adjointable.
\end{lemma}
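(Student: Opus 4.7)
The plan is to exploit the Poincar\'e duality hypothesis to write down explicit formulas for all the left adjoints in sight, and then verify the two required Beck--Chevalley conditions by direct computation.

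First I would observe that since $B$ is dualizable over $A$, the co-base-change functor $f^*(M)=M\otimes_A B$ admits a left adjoint. Indeed, dualizability yields a natural $B$-linear equivalence $M\otimes_A B\simeq \underline{Map}_A(B^\vee,M)$, where $B^\vee := \underline{Map}_A(B,A)$ carries its canonical $B$-module structure. From this the tensor-hom adjunction gives $f_\natural(N) := N\otimes_B B^\vee$ (viewed as an $A$-module) as the desired left adjoint. I would then check that the PD property is stable under co-base change along $g\colon A\to A'$: dualizability transfers via the symmetric monoidal functor $-\otimes_A A'$, the canonical identification $(B')^\vee\simeq B^\vee\otimes_A A'$ holds in $Mod_{B'}$, and invertibility over $B'$ is preserved. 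In particular $f'^*$ also has a left adjoint of the same form.

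To verify vertical left adjointability, I would check that the canonical map $f'_\natural g'^*\to g^* f_\natural$ is an equivalence by evaluating both sides on $N\in Mod_B$: the first gives $(N\otimes_B B')\otimes_{B'}(B')^\vee$ and the second gives $(N\otimes_B B^\vee)\otimes_A A'$; both reduce to $N\otimes_B B^\vee\otimes_A A'$ via the base-change identification of duals. For horizontal right adjointability of the resulting square, the right adjoints to $g^*$ and $g'^*$ are the restriction-of-scalars functors $g_*$ and $g'_*$, which preserve all colimits in the stable setting, and the Beck--Chevalley equivalence $f_\natural g'_*\simeq g_* f'_\natural$ again reduces to the identity $N'\otimes_{B'}(B^\vee\otimes_A A')\simeq N'\otimes_B B^\vee$ coming from $B'=B\otimes_A A'$.

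The main obstacle is not the computation but the coherence bookkeeping: one must check that the abstractly-constructed Beck--Chevalley maps agree with the explicit equivalences read off from the above formulas. Formally, this amounts to the assertion that the dualizability data for $B$ over $A$ pulls back canonically to dualizability data for $B'$ over $A'$, which is in turn a consequence of co-base change being a symmetric monoidal left adjoint and so intertwining the coevaluation and evaluation maps encoding dualizability. With the $LPr$-variant of Proposition \ref{cohprop} in hand, these coherences come essentially for free.
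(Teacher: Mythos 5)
Your proposal is correct and follows essentially the same route as the paper: write down the left adjoint explicitly as $f_\natural(N)=N\otimes_B B^\vee$ using dualizability of $B$ over $A$, verify the vertical Beck--Chevalley map using that the symmetric monoidal functor $g^*$ preserves duals (the paper streamlines the coherence point you flag by checking the map only on the generator $B$, which suffices since all functors involved preserve colimits), and handle horizontal right adjointability via the colimit-preserving restriction-of-scalars functors and the same duality identification on $B'$.
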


In fact, we will only use the first condition of the definition (\ref{pdalg}) of a Poincar\'{e} duality map, namely that $B$ is dualizable as an $A$-module.

\begin{proof}
Indeed, the left adjoint of $f^*$ can be explicitly written down as $f_\natural(N)=B^\vee\otimes_B N$, where $B^\vee$ denotes the $A$-dual of $B$, and similarly for $f'_\natural$.  Then to check that the natural map $f'_\natural g'^*\rightarrow g^*f_\natural$ is an equivalence, it suffices to verify this on $B\in Mod_B$ (because all these functors preserve colimits, and $B$ generates $Mod_B$), and there it follows from the fact that $g^*$, being symmetric monoidal, preserves duals.  This verifies the vertical left adjointability claim.  For the further horizontal right adjointability claim, we note first that the right adjoint $g_*$ of $g^*$ is given by forgetting the $A'$-module structure and certainly preserves colimits, and similarly for $g'^*$.  Then again to verify that the map $f_\natural g'_*\rightarrow g'_*f'_\natural$ is an equivalence it suffices to check on $B'$, where it is an equivalence for essentially the same reason as above.
\end{proof}

A more satisfying proof could be given by extending the adjointability formalism (\ref{adjstpr}) to the setting of module categories over $Mod_A$, then arguing as in Lemmas \ref{subadj} and \ref{doubleadj}, decomposing the above square of pullbacks into a tensor product (over $Mod_A$) of squares which exist in only one direction.

\begin{lemma} Let $e\colon A\rightarrow \mathbf{1}$ be an augmented Poincar\'{e} duality algebra in $LSp$, and let $p\colon \mathbf{1}\rightarrow A$ be the unit map.  Then we have
$$p_\natural e_*\mathbf{1}\simeq \mathbf{1}\otimes_AA^\vee,$$
and (hence) $p_\natural e_*\colon LSp\rightarrow LSp$ is an equivalence.\end{lemma}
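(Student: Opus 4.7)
The plan is to compute $p_\natural e_*$ directly using the explicit formula for $p_\natural$ provided by the previous lemma, and then deduce invertibility from the second clause of the Poincaré duality hypothesis.

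First I would unwind the adjoints. The functor $e^*\colon Mod_A\to LSp$ is cobase change along the augmentation $e\colon A\to\mathbf{1}$, so its right adjoint $e_*$ is restriction of scalars: it regards an object $M\in LSp$ as an $A$-module via $e$. In particular $e_*\mathbf{1}$ is the $A$-module $\mathbf{1}$. Next, the unit map $p\colon\mathbf{1}\to A$ qualifies as a Poincaré duality map in the weak sense used in the proof of the previous lemma, since $A$ is dualizable over $\mathbf{1}$ by hypothesis; that lemma thus supplies the explicit formula $p_\natural(N)\simeq A^\vee\otimes_A N$, where $A^\vee$ denotes the $\mathbf{1}$-linear dual of $A$ with its canonical $A$-module structure. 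Combining, $p_\natural e_*\mathbf{1}\simeq A^\vee\otimes_A\mathbf{1}\simeq \mathbf{1}\otimes_A A^\vee$, giving the first assertion.

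For the second assertion I would observe that the same computation applies to an arbitrary $M$: we have $p_\natural e_*M\simeq A^\vee\otimes_A M$ with $M$ an $A$-module via $e$. Because $A$ acts on $e_*M$ only through $\mathbf{1}$, this $A$-linear tensor factors canonically as $(A^\vee\otimes_A\mathbf{1})\otimes M$. Hence the endofunctor $p_\natural e_*\colon LSp\to LSp$ is literally the functor of smashing with $\mathbf{1}\otimes_A A^\vee$, and it is an equivalence iff this object is invertible in $LSp$.

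Finally, to see that $\mathbf{1}\otimes_A A^\vee$ is invertible, I invoke the second clause of Definition \ref{pdalg}: $A^\vee$ is invertible in $Mod_A$ by assumption, and the base change $e^*\colon Mod_A\to LSp$ is symmetric monoidal as for any map of commutative algebras, so it preserves invertibility; since $e^*(A^\vee)=\mathbf{1}\otimes_A A^\vee$, the conclusion follows. There is no serious obstacle here; the lemma is merely packaging the two defining properties of a Poincaré duality algebra -- dualizability giving the closed formula for $p_\natural$, and invertibility of the dual giving the equivalence -- into a single clean statement about the composite $p_\natural e_*$.
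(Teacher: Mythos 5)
Your proposal is correct and follows the same route as the paper: the paper's proof likewise reads off $p_\natural(N)\simeq A^\vee\otimes_A N$ and $e_*=$ restriction of scalars from the preceding lemma, and then deduces invertibility of $\mathbf{1}\otimes_A A^\vee$ from the second clause of Definition \ref{pdalg}. Your extra remarks (that only dualizability of $A$ is needed for the formula for $p_\natural$, and that $p_\natural e_*$ is smashing with its value on $\mathbf{1}$) merely make explicit what the paper leaves implicit.
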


Here $\mathbf{1}\in LSp$ denotes the $L$-local sphere $LS$, which is the unit object of $LSp$.

\begin{proof}
The first claim is immediate from the identification of the functors $p_\natural$ and $e_*$ given in the above proof, and then the second claim follows from the second condition in the definition of a Poincar\'{e} duality algebra (\ref{pdalg}), which implies that $\mathbf{1}\otimes_AA^\vee\in LSp$ is invertible.
\end{proof}

With these lemmas at hand, the proof of Theorem \ref{algsphere} is exactly the same as that of Theorem \ref{stacksphere} (given in Section \ref{stackpf}), except one replaces $Orb_*$ by $PD_*^{op}$, $Sh$ by $Mod$, $Sp$ by $LSp$, $StPr$ by $LPr$, and the references to the orbifold lemmas by references to the corresponding above PD-algebra lemmas.  We say no more about it.
 
\subsection{Comparing the orbifoldic spheres with the algebraic ones}

Finally, we turn to the proof of Theorem \ref{unithm}.  Again $L$ will be a fixed Bousfield localization of spectra, and from now on we will replace $Sh$ with $Sh\otimes LSp$, so that $Sh$ is now a functor from $Orb$ to $LPr^{op}$.  Thus we are considering sheaves of $L$-local spectra on orbifolds, instead of sheaves of plain spectra.  Note that, because of Proposition \ref{cohprop} and the idempotency of $LSp$, everything in Sections \ref{orb2shf} and \ref{stackpf} still goes through with this replacement, except that the map of spectra $Sph^{top}\colon K(Orb_*)\rightarrow Pic(Sp)$ produced in Section \ref{stackpf} gets replaced by its composite with $L$, i.e.\
$$LSph^{top}\colon K(Orb_*)\rightarrow Pic(Sp)\rightarrow Pic(LSp).$$

In fact, since we will be in this Bousfield-local setting for the rest of the appendix, let us remark once and for all that, although we do fix a localization $L$ for the purposes of discussion, every notion we consider commutes with further localization (\ref{bousfact}).  For example, if an orbifold $M$ is $L$-compact (Definition \ref{lecpct}) and $L'$ is a further localization of $L$, then $M$ is also $L'$-compact; or again the $L'$-dualizing sheaf of $M$ (Definition \ref{dualdef}) identifies with the $L'$-localization of the $L$-dualizing sheaf of $M$, etc.  This will always follow immediately from Proposition \ref{cohprop}, since passage to a further localization $L'$ is effectuated by tensoring with $L'Sp$, and we will only ever use functors that preserve colimits.  This same remark also applies to the previous section on Poincar\'{e} duality algebras.

\subsubsection{Compactness and duality for orbifolds}\label{cpct}

We start with some definitions and results related to compactness and duality.  The eventual point will be that, in the compact case, we have a different access to the crucial functor $p_\natural e_*$:  it is left adjointable, its left adjoint being gotten by composing the double left-adjoint of $p^*$ (which exists in the compact case, as we'll see) with $e^*$.  Thus, under the extra hypothesis of compactness, we can let $e$ rest and make $p$ bear twice the burden.

\begin{definition}\label{lecpct}
An orbifold $M$ is called $L$-compact if $p^*$ is right adjointable, where $p\colon M\rightarrow *$.
\end{definition}

Recall that the right adjoint to $p^*$ is denoted $p_*$; thus $M$ being $L$-compact means that $p_*$ also preserves colimits.

For instance, every compact manifold is an $L$-compact orbifold for all $L$ (this follows from \cite{htt} Cor.\ 7.3.4.12, which says that in the compact Hausdorff case taking global sections preserves filtered colimits).  Another example is that, for a finite group $G$, the orbifold $BG:=*/G$ is $L$-compact whenever $L$ is a further localization of $\#G$-inversion.  Indeed, when $M=BG$ there is a ``norm'' map $p_\natural\rightarrow p_*$ which is an equivalence after $\#G$-inversion.  Further examples can be generated from these by the fact that the product of two $L$-compact orbifolds is also $L$-compact, as follows from Proposition \ref{cohprop} and Lemma \ref{shprod}.

We also introduce the following relative variant of $L$-compactness. We warn that the definition is ad hoc, and the resulting notion may not be closed under composition.

\begin{definition}
A map $M\rightarrow N$ of orbifolds is called $L$-proper if, locally on $N$, it factors as a finite map followed by projection off an $L$-compact orbifold.
\end{definition}

For instance, when $M$ is $L$-compact any map $M\rightarrow N$ of orbifolds is $L$-proper, as can be seen by factoring $f$ via its graph.

The following result is proved in the same way as Lemma \ref{doubleadj}, which it strengthens; we omit the argument.

\begin{lemma}\label{adjproper}
Let $f\colon M\rightarrow N$ be an $L$-proper map of orbifolds, and $f'\colon M'\rightarrow N'$ a pullback of $f$ by some map $g\colon N'\rightarrow N$.  Then the squares
$$\xymatrix{ Sh(M') & Sh(N')\ar[l]_-{f'^*} & & Sh(M')\ar[d]^-{g'_\natural} & Sh(N')\ar[l]_-{f'^*}\ar[d]^-{g_\natural} \\
Sh(M)\ar[u]^-{g'^*} & Sh(N)\ar[l]_-{f^*}\ar[u]^-{g^*} & & Sh(M) & Sh(N)\ar[l]_-{f^*} }$$
in $LPr$ are horizontally right adjointable, where for the second square we assume that $g$ is a submersion and the square is gotten from Lemma \ref{subadj}.
\end{lemma}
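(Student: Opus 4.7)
My plan is to mimic the proof of Lemma \ref{doubleadj}, which this lemma strengthens. The first step is to localize on $N$: the horizontal right adjointability claim for either square concerns equivalences between colimit-preserving functors on sheaf categories, and using Lemma \ref{shprod} together with Proposition \ref{cohprop}, both claims may be verified after restriction to an open cover of $N$ (pulled back to $M$, $N'$, and $M'$). By the definition of $L$-proper, I may then assume that $f$ is of one of two simple shapes: either $f$ is finite, or $f$ is projection off an $L$-compact orbifold $F$.

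In the projection case, write $f = p \times \mathrm{id}_N$ where $p\colon F \to \ast$. Then the square in $LPr$ decomposes as a tensor product over $LSp$ of two simpler squares: one involving only the $F$-direction,
$$\xymatrix{ Sh(F) & Sh(\ast)\ar[l]_-{p^*} \\ Sh(F)\ar@{=}[u] & Sh(\ast)\ar[l]_-{p^*}\ar@{=}[u] }$$
and one being the $g$-direction square with identity horizontals and vertical maps $g^*$ (or $g_\natural$ in the second square, using Lemma \ref{subadj} and the hypothesis that $g$ is a submersion). By Proposition \ref{cohprop}, horizontal right adjointability of a tensor of squares is implied by the same property of each tensor factor; the second factor is trivial, and the first reduces to $p^*$ being right adjointable, which is exactly $L$-compactness of $F$.

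In the finite case, I would further localize on $N$ to split $f$ into two sub-cases: a closed immersion or a projection off a finite set. The latter is already handled by the projection argument, since a finite discrete set is $L$-compact for any $L$ (finite products and coproducts agree in the stable setting). For closed immersions, I argue as in Lemma \ref{doubleadj}: closed immersions are stable under arbitrary base change, so $f'$ is again a closed immersion with complementary open inclusion $j'$; the pairs $\{f^*,j^*\}$ and $\{f'^*,j'^*\}$ each detect equivalences, so the base change equivalences can be checked after pulling back by these four functors, where the identities $f^*f_* = \mathrm{id}$, $j^*f_* = 0$ and their base-changed versions make the verification immediate.

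The main obstacle is purely organizational: the tensor-product decomposition and local-to-global arguments must be made coherent at the level of homotopy-commutative squares in $LPr$, for which Proposition \ref{cohprop} is the essential tool. Conceptually nothing new is needed beyond Lemma \ref{doubleadj}, but one must keep careful track of which adjointabilities are being invoked at each step—in particular, the vertical $g_\natural$ in the second square requires the submersion hypothesis on $g$ to exist in the first place, via Lemma \ref{subadj}.
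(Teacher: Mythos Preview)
Your proposal is correct and is exactly the approach the paper intends: the paper states that the lemma ``is proved in the same way as Lemma \ref{doubleadj}, which it strengthens'' and omits the argument entirely. Your write-up carries out precisely that indicated proof, with the one minor imprecision that after localizing on $N$ the map $f$ is a \emph{composition} of a finite map with a projection rather than literally one or the other---but since horizontal right adjointability of squares is closed under horizontal composition, handling the two factors separately (as you do) suffices, and this is the same shortcut the paper itself takes in the proof of Lemma \ref{doubleadj}.
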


We note the following corollary.

\begin{corollary}\label{extraadj}
Let $X\overset{f}{\longrightarrow} Y\rightarrow M$ be maps of orbifolds, and let $\mathcal{F}\in Sh(M)$ and $\mathcal{G}\in Sh(X)$.  Then:
\begin{enumerate}
\item If $f$ is a submersion, the natural map $f_\natural(\mathcal{F}\wedge\mathcal{G})\rightarrow\mathcal{F}\wedge f_\natural\mathcal{G}$ is an equivalence;
\item If $f$ is $L$-proper then the natural map $\mathcal{F}\wedge f_*\mathcal{G}\rightarrow f_*(\mathcal{F}\wedge\mathcal{G})$ is an equivalence.
\end{enumerate}
Here for clarity we are systematically suppressing all pullbacks from $M$.
\end{corollary}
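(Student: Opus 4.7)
The plan is to deduce both parts from the base-change results already established (Lemmas \ref{subadj} and \ref{adjproper}) via an external-smash/diagonal trick. Write $\tau\colon Y\to M$ and $\sigma=\tau\circ f\colon X\to M$, so the implicit pullbacks in the statement are $\sigma^*\mathcal{F}$ on $X$ and $\tau^*\mathcal{F}$ on $Y$. Let $\delta_X\colon X\to M\times X$ denote the graph map $x\mapsto(\sigma(x),x)$, and similarly $\delta_Y$. By Lemma \ref{shprod}, $Sh(M\times X)\simeq Sh(M)\otimes Sh(X)$, which gives an external smash $\boxtimes\colon Sh(M)\times Sh(X)\to Sh(M\times X)$; together with symmetric monoidality of pullbacks this yields natural equivalences $\sigma^*\mathcal{F}\wedge\mathcal{G}\simeq\delta_X^*(\mathcal{F}\boxtimes\mathcal{G})$ and $\tau^*\mathcal{F}\wedge f_\natural\mathcal{G}\simeq\delta_Y^*(\mathcal{F}\boxtimes f_\natural\mathcal{G})$, and analogously with $f_*$ in place of $f_\natural$.

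The key diagram is the Cartesian square
\[\xymatrix{X\ar[r]^-{\delta_X}\ar[d]_f & M\times X\ar[d]^{\mathrm{id}_M\times f}\\ Y\ar[r]_-{\delta_Y} & M\times Y.}\]
I first observe that if $f$ is a submersion (respectively $L$-proper), then so is $\mathrm{id}_M\times f$; both facts are immediate from the local definitions. For part (1), Lemma \ref{subadj} applied to this square gives the vertical left-adjointability $f_\natural\delta_X^*\simeq\delta_Y^*(\mathrm{id}_M\times f)_\natural$. Under the tensor decomposition of Lemma \ref{shprod}, the map $\mathrm{id}_M\times f$ corresponds to $\mathrm{id}_{Sh(M)}\otimes f$, and Proposition \ref{cohprop} identifies its left adjoint with $\mathrm{id}\otimes f_\natural$; hence $(\mathrm{id}_M\times f)_\natural(\mathcal{F}\boxtimes\mathcal{G})\simeq\mathcal{F}\boxtimes f_\natural\mathcal{G}$. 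Stringing the identifications together yields the desired equivalence $f_\natural(\sigma^*\mathcal{F}\wedge\mathcal{G})\simeq\tau^*\mathcal{F}\wedge f_\natural\mathcal{G}$.

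Part (2) is formally identical, substituting Lemma \ref{adjproper} (horizontal right-adjointability for $f_*$ across base change by $L$-proper maps) for Lemma \ref{subadj}, and applying Proposition \ref{cohprop} on the right-adjoint side to obtain $(\mathrm{id}_M\times f)_*(\mathcal{F}\boxtimes\mathcal{G})\simeq\mathcal{F}\boxtimes f_*\mathcal{G}$.

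The one subtlety—which I expect to be the main obstacle—is verifying that the equivalences produced by these formal manipulations really coincide with the natural comparison maps referenced in the statement, rather than merely agreeing on objects. The natural maps are built from the unit and counit of the $(f^*,f_\natural)$ and $(f^*,f_*)$ adjunctions together with the symmetric monoidality of $f^*$, while the base-change maps of Lemmas \ref{subadj} and \ref{adjproper} are constructed by exactly the same unit–counit recipe (as in Definition \ref{adjsq}). The identification is therefore formal via the coherence encoded in Proposition \ref{cohprop}, but takes a modest unwinding to spell out in full.
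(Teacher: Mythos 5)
Your proposal is correct and is essentially the paper's own argument: the paper's (one-line) proof likewise applies Lemma \ref{subadj} (for part 1) and Lemma \ref{adjproper} (for part 2) to the Cartesian square exhibiting $f$ as the pullback of $f\times\mathrm{id}_M$ along the graph of $Y\to M$, with the tensor decomposition of Lemma \ref{shprod} identifying $(f\times\mathrm{id}_M)_\natural$ and $(f\times\mathrm{id}_M)_*$ with $f_\natural\otimes\mathrm{id}$ and $f_*\otimes\mathrm{id}$. Your closing remark about matching the formal base-change equivalence with the stated natural map is a reasonable point of care, and it is handled exactly as you suggest, by the unit--counit construction of Definition \ref{adjsq} together with Proposition \ref{cohprop}.
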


\begin{proof}
The first claim follows from Lemma \ref{subadj} applied to the pullback of $f\times M$ to $f$, and the second claim follows from Lemma \ref{adjproper} applied to the same.
\end{proof}

Now we move on to duality.

\begin{definition}\label{dualdef}
Let $f\colon M\rightarrow N$ be a submersion of orbifolds.  We define the $L$-dualizing sheaf of $f$ to be
$$\mathbb{D}_{M/N}:=(p_1)_\natural\Delta_*\mathbf{1}\in Sh(M),$$
with $\mathbf{1}\in Sh(M)$ the unit, $\Delta\colon M\rightarrow M\times_N M$ the diagonal, and $p_1\colon M\times_NM\rightarrow M$ the first projection.
\end{definition}

When $N=*$, we will write $\mathbb{D}_M$ for $\mathbb{D}_{M/*}$.  The terminology of ``dualizing sheaf'' will be justified after the following preliminary lemma.
 
\begin{lemma}\label{duallemma} Let $f\colon M\rightarrow N$ be a submersion.  Then:
\begin{enumerate}
\item There is a canonical equivalence $(p_1)_\natural\Delta_*\simeq (-)\wedge\mathbb{D}_{M/N}$ of functors $Sh(M)\rightarrow Sh(M)$;
\item  The sheaf $\mathbb{D}_{M/N}$ is invertible (under smash product).
\end{enumerate}
\end{lemma}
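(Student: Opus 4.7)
I would prove (1) by a three-step projection-formula calculation. Because $p_1 \circ \Delta = \mathrm{id}_M$, we have $\Delta^* p_1^* \simeq \mathrm{id}$ and hence $\Delta_*\mathcal{F} \simeq \Delta_*\Delta^* p_1^*\mathcal{F}$ for any $\mathcal{F} \in Sh(M)$. Next, since the diagonal of an orbifold is finite (hence $L$-proper), part 2 of Corollary \ref{extraadj} applied to $\Delta$ rewrites this as $p_1^*\mathcal{F} \wedge \Delta_*\mathbf{1}$. Finally, applying $(p_1)_\natural$ and invoking part 1 of the same corollary for the submersion $p_1$ gives
$$(p_1)_\natural\Delta_*\mathcal{F} \simeq (p_1)_\natural(p_1^*\mathcal{F}\wedge\Delta_*\mathbf{1}) \simeq \mathcal{F}\wedge(p_1)_\natural\Delta_*\mathbf{1} = \mathcal{F}\wedge\mathbb{D}_{M/N},$$
naturally in $\mathcal{F}$.

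For part (2) the plan is to verify invertibility locally on $M$, after first establishing a base change statement for $\mathbb{D}$. Given any pullback square
$$\xymatrix{ M' \ar[r]^g \ar[d]^{f'} & M \ar[d]^f \\ N' \ar[r] & N }$$
with $f$ a submersion, I would show $g^* \mathbb{D}_{M/N} \simeq \mathbb{D}_{M'/N'}$. The diagonals and first projections stack into pullback squares
$$\xymatrix{ M' \ar[r]^g \ar[d]^{\Delta'} & M \ar[d]^\Delta \\ M'\times_{N'}M' \ar[r]^{\tilde g} \ar[d]^{p'_1} & M\times_N M \ar[d]^{p_1} \\ M' \ar[r]^g & M, }$$
and Lemma \ref{adjproper} applied to the $L$-proper $\Delta$ in the top gives $\tilde g^* \Delta_* \simeq \Delta'_* g^*$, while Lemma \ref{subadj} applied to the submersion $p_1$ in the bottom gives $g^* (p_1)_\natural \simeq (p'_1)_\natural \tilde g^*$; splicing and evaluating at $\mathbf{1}$ yields the desired identification.

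Specializing base change to open inclusions gives $\mathbb{D}_{U/N} \simeq \mathbb{D}_{M/N}|_U$, so invertibility can be checked locally on $M$. By the definition of a submersion I may then assume $f$ factors as $M = \mathbb{R}^n \times V \xrightarrow{\pi} V \xrightarrow{e} N$ with $e$ étale. Because the diagonal of an étale map is an open-and-closed immersion, $V\times_N V$ decomposes locally as $\Delta_V(V) \sqcup R$ and $\Delta_M$ lands in the component $\mathbb{R}^n \times \mathbb{R}^n \times V \subset M\times_N M$; using the monoidality of $Sh$ (Lemma \ref{shprod}) and base change once more, this reduces invertibility of $\mathbb{D}_{M/N}$ to invertibility of $\mathbb{D}_{\mathbb{R}^n}$ in $Sh(\mathbb{R}^n)$. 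For the latter, the open-closed cofiber sequence $j_\natural\mathbf{1} \to \mathbf{1} \to \Delta_*\mathbf{1}$ on $\mathbb{R}^n\times\mathbb{R}^n$ (with $j$ the inclusion of the complement of the diagonal), pushed forward by $(p_1)_\natural$ and using that the fibers of $p_1\circ j$ are $\mathbb{R}^n\setminus\{\ast\} \simeq S^{n-1}$, identifies $\mathbb{D}_{\mathbb{R}^n}$ with the cofiber of $\Sigma^\infty_+ S^{n-1}\to\mathbf{1}$, i.e.\ the constant sheaf with value $S^n$, manifestly invertible. I expect the main technical hurdle to be the base-change step: organizing the two stacked Beck-Chevalley equivalences coherently and verifying the relevant squares are indeed pullbacks, since the reduction to $\mathbb{R}^n$ and the final computation are then routine.
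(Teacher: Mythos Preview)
Your proof is correct and follows essentially the same route as the paper. For part~(1) both arguments apply Corollary~\ref{extraadj} twice (projection formula for the finite map $\Delta$ and for the submersion $p_1$); you simply spell out the three steps that the paper summarizes in one sentence. For part~(2) both arguments localize on $M$, separate the \'etale and $\mathbb{R}^n$ contributions, and compute $\mathbb{D}_{\mathbb{R}^n}$ via the open--closed cofiber sequence for the diagonal.

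Two minor differences are worth noting. First, you take the trouble to prove the base-change formula $g^*\mathbb{D}_{M/N}\simeq\mathbb{D}_{M'/N'}$ before localizing, whereas the paper simply asserts ``the claim is local on $M$''; your version is more honest and the stacked Beck--Chevalley argument you sketch is exactly right (the top square is a pullback because $M'\times_{N'}M' \simeq (M\times_N M)\times_N N'$). Second, in the $\mathbb{R}^n$ step you justify the identification of $(p_1)_\natural j_\natural\mathbf{1}$ with the constant sheaf on $\Sigma^\infty_+(\mathbb{R}^n\smallsetminus 0)$ by saying ``the fibers of $p_1\circ j$ are $\mathbb{R}^n\smallsetminus\{\ast\}$''; strictly speaking, knowing the fibers is not enough --- one needs the bundle to be trivial. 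The paper handles this by applying the self-homeomorphism $(x,y)\mapsto(x,x-y)$ of $\mathbb{R}^n\times\mathbb{R}^n$, which straightens $\Delta$ to $\mathbb{R}^n\times\{0\}$ and makes the triviality manifest, after which Lemma~\ref{mansph} applies directly. This is a one-line fix to your argument, not a gap.
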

\begin{proof}
Since $p_1$ is a submersion and $\Delta$ is finite, the first claim follows from Corollary \ref{extraadj}.

The second claim is local on $M$, so we can reduce to two cases: one where $f$ is projection off some $\mathbb{R}^n$, and the other where $f$ is \'{e}tale.  In the second case $\Delta$  is, locally on $M$, equivalent to projection off a finite set; thus $\mathbb{D}_{M/N}= \Delta_*(p_1)_\natural\mathbf{1}\simeq\Delta_\natural (p_1)_\natural\mathbf{1}\simeq \mathbf{1}$, certainly invertible.  In the first case, by Lemma \ref{shprod} we can assume that $N=*$.  But then the self-homeomorphism $(x,y)\mapsto (x,x-y)$ of $\mathbb{R}^n\times\mathbb{R}^n$ fixes $p_1$ but transforms $\Delta$ into $\mathbb{R}^n\times(0\rightarrow\mathbb{R}^n)$, and so the argument of Lemma \ref{mansph} identifies $\mathbb{D}_{\mathbb{R}^n}$ with the constant sheaf on $cofib(L\Sigma^\infty_+(\mathbb{R}^n-0)\rightarrow L\Sigma^\infty_+\mathbb{R}^n)\simeq L\Sigma^\infty S^n$, which is also certainly invertible.\end{proof}

An elaboration of the reasoning at the end of the above proof would show that, when $M$ is a topological manifold, $\mathbb{D}_M$ identifies with the $L$-localization of the fiberwise ``one-point compactification'' of the tangent microbundle of $M$.
 
Now we justify the name ``dualizing sheaf''.

\begin{proposition}\label{dualprop}
Let $f\colon M\rightarrow N$ an $L$-proper submersion.  Then there is a canonical equivalence
$$f_\natural(-)\simeq f_*(-\wedge\mathbb{D}_{M/N}).$$
\end{proposition}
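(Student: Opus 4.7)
The plan is to combine the identification $\mathcal{F}\wedge\mathbb{D}_{M/N}\simeq (p_1)_\natural\Delta_*\mathcal{F}$ of Lemma \ref{duallemma}(1) with a base-change equivalence coming from Lemma \ref{adjproper}, applied to the self-pullback
$$\xymatrix{M\times_N M \ar[r]^-{p_1} \ar[d]_-{p_2} & M\ar[d]^-f\\ M\ar[r]^-f & N.}$$
The key step is producing a natural equivalence $f_*(p_1)_\natural\simeq f_\natural(p_2)_*$ of functors $Sh(M\times_N M)\to Sh(N)$; once in hand, the conclusion follows by precomposing with $\Delta_*$ and exploiting $p_2\circ\Delta=\mathrm{id}_M$.

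To produce the base-change equivalence, I would apply Lemma \ref{adjproper} (the horizontal right adjointability of its second square) to the pullback above, with the right vertical $f$ playing the role of the $L$-proper map and the bottom horizontal $f$ playing the role of the submersion $g$. This puts $p_2$ in the role of $f'$ and $p_1$ in the role of $g'$; both the $L$-properness and submersion hypotheses hold by assumption on $f$. Passing the horizontal functors in the resulting $LPr$-square to their right adjoints converts it to a commutative square whose content is precisely $f_*(p_1)_\natural\simeq f_\natural(p_2)_*$.

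Given this, the proof concludes with one chain of natural equivalences: for $\mathcal{F}\in Sh(M)$,
$$f_*(\mathcal{F}\wedge\mathbb{D}_{M/N})\;\simeq\;f_*(p_1)_\natural\Delta_*\mathcal{F}\;\simeq\;f_\natural(p_2)_*\Delta_*\mathcal{F}\;\simeq\;f_\natural\mathcal{F},$$
where the first step is Lemma \ref{duallemma}(1), the second is the base-change identity just produced, and the third uses $p_2\circ\Delta=\mathrm{id}_M$ to collapse $(p_2)_*\Delta_*$ to the identity functor. Naturality in $\mathcal{F}$ is automatic since every intermediate arrow is already a natural transformation of colimit-preserving functors.

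The principal subtlety is tracking the asymmetry between the two projections: the definition of $\mathbb{D}_{M/N}$ involves $(p_1)_\natural\Delta_*$, whereas it is the opposite combination $(p_2)_*\Delta_*$ that collapses against the diagonal. The base-change identity from Lemma \ref{adjproper} is precisely what effects this crossing between a $(-)_\natural$ on one side and a $(-)_*$ on the other, and this is where the $L$-properness hypothesis on $f$ is indispensable --- it is what produces the right adjoint $f_*$ in the first place, without which the statement would not even make sense.
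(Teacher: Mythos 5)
Your proposal is correct and is essentially identical to the paper's own proof: both apply the second (horizontally right adjointable) square of Lemma \ref{adjproper} to the self-pullback of $f$, evaluate the resulting identity $f_*(p_1)_\natural\simeq f_\natural(p_2)_*$ on $\Delta_*\mathcal{F}$, and conclude via Lemma \ref{duallemma}(1) together with $p_2\circ\Delta=\mathrm{id}$.
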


Thus in this case $\mathbb{D}_{M/N}$ expresses ``homology'' ($f_\natural$) in terms of ``cohomology'' ($f_*$), and $\mathbb{D}_{M/N}^{-1}$ vice-versa.

\begin{proof}
Applying the second claim of Lemma \ref{adjproper} to the pullback of $M\rightarrow N$ by itself and evaluating the resulting right adjointed square on $\Delta_*\mathcal{F}$ gives
$$f_\natural(\mathcal{F})\simeq f_*(p_1)_\natural\Delta_*(\mathcal{F}),$$
so we conclude by the first claim of the previous lemma.\end{proof}

\subsubsection{First step of the proof of Theorem \ref{unithm}}\label{firststep}

We don't yet have enough terminology to properly interpret the statement of Theorem \ref{unithm} (we're missing the notions of an orbifold ``possessing $L$-unipotent duality'' and of a submersion of orbifolds ``being $L$-unipotent''), but nevertheless we can still explain the first step of its proof using the above material.  In fact, only the following corollary is necessary:

\begin{corollary}\label{doubleleft}
Let $f\colon M\rightarrow N$ be a submersion between $L$-compact orbifolds, and let $f'\colon M'\rightarrow N'$ be a pullback of $f$ by a finite map $i\colon N'\rightarrow N$.  Then the square
$$\xymatrix{
Sh(M')\ar[d]^-{f'_\natural} & Sh(M)\ar[l]_-{i'^*}\ar[d]^-{f_\natural} \\
Sh(N') & Sh(N)\ar[l]_-{i^*}
}$$
(produced by the vertical left adjointability claim in  Lemma \ref{subadj}) is vertically left adjointable.
\end{corollary}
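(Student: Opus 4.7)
The plan is to use the duality formula of Proposition \ref{dualprop} to produce explicit left adjoints for $f_\natural$ and $f'_\natural$, and then to reduce the Beck--Chevalley condition to an assertion about how the dualizing sheaf behaves under pullback along $i'$.

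First I would check that the hypotheses of Proposition \ref{dualprop} apply, i.e., that $f$ and $f'$ are $L$-proper. For $f$ this is immediate from the $L$-compactness of $M$: any map out of an $L$-compact orbifold is $L$-proper, by the remark following the definition of $L$-proper. For $f'$, the point is to show that $M'$ is $L$-compact. The map $i'\colon M'\to M$ is a pullback of the finite map $i$, hence itself finite and so $L$-proper, so $i'_*$ preserves colimits by Lemma \ref{adjproper}; composing with the pushforward $(M\to \ast)_*$, which preserves colimits by $L$-compactness of $M$, shows that $(M'\to\ast)_*$ preserves colimits as well, verifying $L$-compactness of $M'$ and hence $L$-properness of $f'$.

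Next, Proposition \ref{dualprop} yields $f_\natural\simeq f_*(-\wedge\mathbb{D}_{M/N})$ and the analogous formula for $f'$, with both dualizing sheaves invertible by Lemma \ref{duallemma}. Since $f_*$ has left adjoint $f^*$ and smashing with an invertible sheaf is an auto-equivalence, we obtain explicit left adjoints
$$(f_\natural)^L(Y)\simeq f^*Y\wedge\mathbb{D}_{M/N}^{-1},\qquad (f'_\natural)^L(Y)\simeq f'^*Y\wedge\mathbb{D}_{M'/N'}^{-1},$$
so both vertical maps in the square are left adjointable. Using the base-change equivalence $f'^*i^*\simeq i'^*f^*$ and the symmetric monoidality of $i'^*$, one finds that both $(f'_\natural)^L i^* Y$ and $i'^*(f_\natural)^L Y$ identify with $i'^*f^*Y$ smashed with $\mathbb{D}_{M'/N'}^{-1}$ and $i'^*\mathbb{D}_{M/N}^{-1}$, respectively; so the canonical Beck--Chevalley map reduces to a comparison of these sheaves, and is an equivalence iff $i'^*\mathbb{D}_{M/N}\simeq \mathbb{D}_{M'/N'}$.

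To produce this last equivalence I would use the defining formula $\mathbb{D}_{M/N}=(p_1)_\natural\Delta_*\mathbf{1}$ together with the pullback squares
$$\xymatrix{ M'\times_{N'}M'\ar[r]^-{j}\ar[d]^-{p'_1} & M\times_NM\ar[d]^-{p_1}\\ M'\ar[r]^-{i'} & M}\qquad \xymatrix{ M'\ar[r]^-{i'}\ar[d]^-{\Delta'} & M\ar[d]^-{\Delta}\\ M'\times_{N'}M'\ar[r]^-{j} & M\times_NM}$$
Lemma \ref{subadj} applied to the left square gives $i'^*(p_1)_\natural\simeq (p'_1)_\natural j^*$, and Lemma \ref{adjproper} applied to the right square --- using that $\Delta$ is finite (orbifolds having finite diagonal) and hence $L$-proper --- gives $j^*\Delta_*\simeq \Delta'_*i'^*$. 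Chaining these and using $i'^*\mathbf{1}\simeq\mathbf{1}$ produces $i'^*\mathbb{D}_{M/N}\simeq (p'_1)_\natural\Delta'_*\mathbf{1}=\mathbb{D}_{M'/N'}$. The main bookkeeping obstacle will be ensuring that the canonical Beck--Chevalley map produced by the adjointability formalism genuinely agrees with the concrete equivalence built up from these natural isomorphisms, but the coherence machinery of Proposition \ref{cohprop} (every functor in sight preserves colimits) should handle this automatically.
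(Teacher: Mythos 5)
Your proposal is correct, and its first half coincides with the paper's: both use $L$-compactness to get $L$-properness of $f$ and $f'$ and then read off explicit left adjoints $\mathcal{G}\mapsto f^*\mathcal{G}\wedge\mathbb{D}_{M/N}^{-1}$ from Proposition \ref{dualprop} (your extra check that $M'$ is $L$-compact, via $(M'\to\ast)_*=(M\to\ast)_*\circ i'_*$, is a detail the paper elides with ``similarly''). Where you diverge is the key step. The paper, having established that the vertical maps are left adjointable, simply observes that vertical left adjointability of this square is \emph{formally equivalent} to its horizontal right adjointability --- the two Beck--Chevalley maps are mates of one another --- and horizontal right adjointability is exactly Lemma \ref{doubleadj}, already proved. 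You instead compute both composites of left adjoints explicitly and reduce to the base-change equivalence $i'^*\mathbb{D}_{M/N}\simeq\mathbb{D}_{M'/N'}$, which you then establish correctly from the defining formula for the dualizing sheaf using Lemmas \ref{subadj} and \ref{adjproper} (both base-change squares you invoke are genuine pullbacks, and the directions of your formulas check out). The trade-off is exactly the one you flag at the end: your route still owes a verification that the canonical mate agrees, under your identifications, with the composite of these explicit equivalences --- a map between invertible objects need not be an equivalence just because its source and target are abstractly equivalent --- and Proposition \ref{cohprop} does not dispatch this by itself. The paper's mate argument sidesteps that bookkeeping entirely; your argument costs a little more care but yields as a byproduct the compatibility of dualizing sheaves with finite base change, a useful statement in its own right.
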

\begin{proof}
Since $M$ is $L$-compact, $f$ is $L$-proper in addition to being a submersion.  Then Proposition \ref{dualprop} lets us directly read off the left adjoint of $f_\natural$ as $\mathcal{G}\mapsto p^*(\mathcal{G})\wedge\mathbb{D}_{M/N}^{-1}$.  Similarly, $f'_\natural$ is also left adjointable.

But then the claim is formally equivalent to Lemma \ref{doubleadj}, by passing to adjoints.
\end{proof}
Now we give the first step of the proof of Theorem \ref{unithm}.  It is a reinterpretation of the restriction of $LSph^{top}\colon K(Orb_*)\rightarrow Pic(LSp)$ to the full $\infty$-subcategory with fibrations $CO_*\subseteq Orb_*$ consisting of the $L$-compact pointed orbifolds.  More properly, we give a reinterpretation of the smash-inverse $(LSph^{top})^{-1}$ restricted to $CO_*$, as follows.  We start by exactly following the construction of $LSph^{top}$ (given in Section \ref{stackpf}), replacing $Orb_*$ with $CO_*$ and $StPr$ with $LPr$, until the point where Lemma \ref{doubleadj} was used to pass to right adjoints in the $n$-direction of certain diagrams $(\Delta^n)^{op}\times\Delta^m\rightarrow LPr$, thereby mapping $S_a''\rightarrow S_a'''$.

At that point we instead use Corollary \ref{doubleleft} to pass again to left adjoints in the $m$-direction, thereby mapping $S_a''$ to the $E_\infty$-space of objects in some full subcategory of $Fun((\Delta^n)^{op}\times(\Delta^m)^{op},LPr)$.  This makes the rest of the story left adjoint to what it was before; and since for invertible maps left adjoints are the same as inverses, it has the effect of replacing $LSph^{top}$ by $(LSph^{top})^{-1}$, as claimed.

We can also make the same modification to the construction of $Sph^{alg}\colon K(PD_*)\rightarrow Pic(LSp)$, this time without restricting to any subcategory of $PD$.  Indeed, we need only check that for a PD-map $f\colon A\rightarrow B$ of $E_\infty$-algebras in $LSp$, the functor $f^*\colon Mod_A\rightarrow Mod_B$ has a double left adjoint.  But its left adjoint $N\mapsto B^\vee\otimes_B N$ (recall that $B^\vee$ denotes the $A$-dual of $B$) differs from its right adjoint $N\mapsto N$ by tensoring with the invertible $B$-module $B^\vee$, so the double left adjoint is given by $M\mapsto (B^\vee)^{-1}\otimes_B N$ and does indeed exist, as claimed.

Thus for purposes of the proof of Theorem \ref{unithm} we can replace $LSph^{top}$ with $(LSph^{top})^{-1}$ and $Sph^{alg}$ with $(Sph^{alg})^{-1}$, the constructions of these being changed as indicated above (use double left adjoints on the fibrations instead of using one left adjoint on the fibrations and one right adjoint on the sections).

\subsubsection{Unipotent sheaves on orbifolds}\label{uniorb}

The next step in the proof of Theorem \ref{unithm} will be to trim the fat off from the above construction of $(LSph^{top})^{-1}\colon K(CO_*)\rightarrow Pic(LSp)$.  The point is this:  the $\infty$-categories of sheaves $Sh(M)$ contain a lot of objects which are now irrelevant, in that they never arise if  you start on the point and apply only those functors (namely, pullbacks for arbitrary maps and the left and double left adjoints of pullbacks for $L$-proper submersions) used in the above construction of $(LSph^{top})^{-1}$.  For instance, only locally constant sheaves now arise, so we could replace the $\infty$-categories $Sh(M)$ by their full subcategories of locally constant sheaves and still obtain the same $(LSph^{top})^{-1}$.  But we can get by with an even smaller class of sheaves than this, the class of ``unipotent'' sheaves, provided we restrict to a further full subcategory of $CO$, namely that of $L$-compact orbifolds possessing $L$-unipotent duality; and this is what we will do, in fact skipping right over the class of locally constant sheaves.

Maybe the following table will help conceptualize the different categories of sheaves in play here.

\begin{center}
\begin{tabular} { | l | l | }\hline
This class of sheaves on $M$: & Captures exactly: \\ \hline\hline
All sheaves & The topology of $M$\\ \hline
Locally constant sheaves & The homotopy type of $M$ \\ \hline
Unipotent sheaves (of spectra) & The $E_\infty$-algebra $C^*(M)$ \\ \hline
\end{tabular}
\end{center}

Though again, we've decided to skip over the second row and go straight to the third.  Here is the basic definition:

\begin{definition}\label{unidef}
Let $M$ be an orbifold.  We say that an $\mathcal{F}\in Sh(M)$ is unipotent if it lies in the smallest full subcategory of $Sh(M)$ which is stable, closed under colimits, and contains the unit $\mathbf{1}$.

We denote by $u^*\colon Sh^{uni}(M)\subseteq Sh(M)$ the inclusion of the full subcategory of unipotent sheaves.
\end{definition}

Evidently, the class of unipotent sheaves is closed under pullbacks and smash product, and on the point every sheaf is unipotent.

A warning: it is not clear whether in general this definition commutes with further Bousfield localization, i.e.\ whether when $L'$ is a further localization of $L$, the functor $u^*\otimes L'Sp$ is fully faithful with essential image the unipotent subcategory of $Sh(M)\otimes L'Sp$.  However, this does hold when $M$ is $L$-compact, because then (as we see in the subsequent lemma) the right adjoint $u_*$ of $u^*$ preserves colimits, so the property of the unit map $id\rightarrow u_*u^*$ being an equivalence, and hence the property of full fidelity of $u^*$, is preserved under tensor product (by Proposition \ref{cohprop}).  For this reason, we will only ever use Definition \ref{unidef} in the $L$-compact case, though probably a weaker hypothesis suffices.

Here then is the key technical lemma:

\begin{lemma}\label{cpctuni}
Let $M$ be an orbifold, and set $p\colon M\rightarrow *$.  Then:
\begin{enumerate}
\item The functor $p_*$ detects equivalences between unipotent sheaves on $M$;
\item If $M$ is $L$-compact, then the right adjoint $u_*$ of $u^*\colon Sh^{uni}(M)\rightarrow Sh(M)$ preserves colimits.
\end{enumerate}
\end{lemma}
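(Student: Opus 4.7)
My plan is to prove part (1) first and then deduce part (2) from it. For (1), the key reduction is to the claim: if $\mathcal{H} \in Sh^{uni}(M)$ satisfies $p_*\mathcal{H} = 0$, then $\mathcal{H} = 0$. This reduction uses exactness of $p_*$ together with the closure of $Sh^{uni}(M)$ under cofibers: the cofiber of a map $f$ of unipotents is unipotent, and is $p_*$-acyclic whenever $p_*f$ is an equivalence. To prove the reduced claim, I would consider the full subcategory $\mathcal{D} \subseteq Sh^{uni}(M)$ of those $\mathcal{G}$ with $Map_{Sh(M)}(\mathcal{G}, \mathcal{H}) \simeq 0$; since $Map(-,\mathcal{H})$ sends colimits to limits, $\mathcal{D}$ is stable and closed under colimits in $Sh^{uni}(M)$. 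It contains $\mathbf{1}$ because $Map(\mathbf{1},\mathcal{H}) \simeq p_*\mathcal{H} = 0$, so the minimality built into Definition \ref{unidef} forces $\mathcal{D} = Sh^{uni}(M)$; taking $\mathcal{G} = \mathcal{H}$ then yields $\mathrm{id}_\mathcal{H} \simeq 0$, i.e.\ $\mathcal{H} = 0$.

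For (2), my plan rests on the factorization $p_* \simeq (p_* u^*) \circ u_*$ of functors $Sh(M) \to LSp$, which follows from the chain of adjunctions
$$Map_{LSp}(X, p_*\mathcal{F}) \simeq Map_{Sh(M)}(p^*X, \mathcal{F}) \simeq Map_{Sh^{uni}(M)}(p^*X, u_*\mathcal{F}),$$
valid because $p^*X \in Sh^{uni}(M)$ for every $X \in LSp$. When $M$ is $L$-compact, $p_*$ preserves colimits, and hence so does $p_* u^*$ (since $u^*$ is colimit-preserving as a left adjoint). Then for any diagram $\{\mathcal{F}_i\}$ in $Sh(M)$, applying $p_* u^*$ to the canonical comparison map $\mathrm{colim}\, u_*\mathcal{F}_i \to u_*\,\mathrm{colim}\,\mathcal{F}_i$ in $Sh^{uni}(M)$ produces $\mathrm{colim}\, p_*\mathcal{F}_i$ on both sides (by the displayed factorization), so the map is an equivalence after $p_* u^*$. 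Part (1) then upgrades it to a genuine equivalence in $Sh^{uni}(M)$, showing that $u_*$ preserves colimits.

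The main thing I anticipate having to handle separately is the existence of $u_*$ in the first place, i.e.\ the presentability of $Sh^{uni}(M)$. This should follow from the general fact that the smallest colimit-closed stable full subcategory of a presentable stable $\infty$-category containing a given object is itself presentable (c.f.\ \cite{htt} Prop.\ 5.5.3.12); in the $L$-compact setting relevant for (2), it would also drop out of Barr--Beck--Lurie applied to $p_* u^*$, which by part (1) is conservative and (as just noted) colimit-preserving, exhibiting $Sh^{uni}(M)$ as the category of $p_*\mathbf{1}$-modules in $LSp$.
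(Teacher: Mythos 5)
Your proposal is correct and follows essentially the same route as the paper: for (1) the same reduction to $p_*\mathcal{H}=0\Rightarrow\mathcal{H}=0$ followed by the "smallest stable colimit-closed subcategory containing $\mathbf{1}$" argument ending with $\mathrm{id}_{\mathcal{H}}\simeq 0$, and for (2) the same factorization $p_*\simeq(p_*u^*)\circ u_*$ combined with $L$-compactness and the conservativity from (1). Your closing remark on the presentability of $Sh^{uni}(M)$ (hence the existence of $u_*$) addresses a point the paper leaves implicit, and your proposed justification for it is sound.
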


The argument below for the first claim is pulled from a MathOverflow answer of Sam Gunningham.  (It replaces a more opaque proof which only worked when $M$ is $L$-compact.)

\begin{proof}
For the first claim, by passing to cofibers it suffices to show that  $p_*(\mathcal{F})=0$ implies $\mathcal{F}=0$ for $\mathcal{F}$ unipotent.  So let $\mathcal{F}\in Sh^{uni}(M)$ with $p_*(\mathcal{F})=0$.  Then the $LSp$-enriched mapping object $Map(\mathcal{G},\mathcal{F})$ is zero for any $\mathcal{G}\in Sh^{uni}(M)$, since it is zero for $\mathcal{G}=\mathbf{1}$ and the collection of $\mathcal{G}$ for which it is zero is stable and closed under colimits.  Taking $\mathcal{G}=\mathcal{F}$ we conclude that $id_\mathcal{F}=0$ and so $\mathcal{F}=0$, as desired.

For the second claim, we note that $p_*u_*\simeq p_*$, because $u^*p^*\simeq p^*$.  Thus $p_*u_*$ preserves colimits by $L$-compactness of $M$.  But $p_*$ detects equivalences on the image of $u_*$ by the above, so we deduce that $u_*$ also preserves colimits, as required.
\end{proof}

\begin{corollary}\label{uniprod}
Let $M$ and $N$ be $L$-compact orbifolds.  Then the functor
$$Sh^{uni}(M)\otimes Sh^{uni}(N)\overset{u^*_M\otimes u^*_N}{\longrightarrow} Sh(M)\otimes Sh(N)\simeq Sh(M\times N)$$
is fully faithful with essential image $Sh^{uni}(M\times N)$.
\end{corollary}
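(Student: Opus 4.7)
My plan is to establish the full fidelity by a tensor-product argument in $LPr$, and then to identify the essential image via two containments.

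For the full fidelity, the key input is that $M$ and $N$ are $L$-compact, so by Lemma~\ref{cpctuni}(2) the right adjoints $u_{M,*}$ and $u_{N,*}$ preserve colimits. Hence the fully faithful inclusions $u^*_M$ and $u^*_N$---which are automatically colimit-preserving, since by definition $Sh^{uni}(-)$ is closed under colimits in $Sh(-)$---together with their right adjoints, define adjunctions entirely inside $LPr$, with units $\mathrm{id}\to u_{M,*}u^*_M$ and $\mathrm{id}\to u_{N,*}u^*_N$ that are equivalences (this being the content of full fidelity). I would then apply Proposition~\ref{cohprop} to tensor these adjunctions coherently: the resulting morphism $u^*_M\otimes u^*_N$ in $LPr$ has right adjoint $u_{M,*}\otimes u_{N,*}$, and its unit is the tensor of the two individual units, hence again an equivalence. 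This shows $u^*_M\otimes u^*_N$ is fully faithful, and composing with the symmetric monoidal equivalence $Sh(M)\otimes Sh(N)\simeq Sh(M\times N)$ of Lemma~\ref{shprod} gives the functor of the statement.

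For the essential image, I argue by two containments. On the one hand, the essential image of the fully faithful colimit-preserving functor $u^*_M\otimes u^*_N$, whose right adjoint also preserves colimits, is exactly the smallest full subcategory of $Sh(M\times N)$ closed under colimits and containing the external tensor products $u^*_M\mathcal{F}\boxtimes u^*_N\mathcal{G}$ for $\mathcal{F}\in Sh^{uni}(M)$ and $\mathcal{G}\in Sh^{uni}(N)$. Since $\boxtimes$ preserves colimits in each variable and takes $(\mathbf{1}_M,\mathbf{1}_N)$ to $\mathbf{1}_{M\times N}\in Sh^{uni}(M\times N)$, every such external tensor product is unipotent on $M\times N$, and hence the essential image sits inside $Sh^{uni}(M\times N)$. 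On the other hand, this essential image is itself stable, closed under colimits, and contains $\mathbf{1}_{M\times N}$, so by the minimality in the definition of unipotent sheaf, it contains all of $Sh^{uni}(M\times N)$.

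The main subtlety is the first step: the tensoring of adjunctions via Proposition~\ref{cohprop} requires that the adjunctions live in $LPr$, and this is precisely what the $L$-compactness hypothesis buys us through Lemma~\ref{cpctuni}(2). Without this hypothesis, $u_{M,*}$ might fail to preserve colimits, the tensored right adjoint would not fit inside $LPr$, and the coherent tensoring argument would break down.
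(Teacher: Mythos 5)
Your proposal is correct and follows essentially the same route as the paper: Lemma~\ref{cpctuni}(2) places the adjunctions $u^*\dashv u_*$ inside $LPr$ so that Proposition~\ref{cohprop} lets one tensor them, the unit/counit of the tensored adjunction is the tensor of the individual ones, and full fidelity follows; the essential image is then pinned down by the same two containments (generation of the tensor product by pure tensors of unipotents on one side, minimality of $Sh^{uni}(M\times N)$ on the other). No gaps.
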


Thus $M\mapsto Sh^{uni}(M)$ promotes to a symmetric monoidal functor $CO\rightarrow LPr^{op}$, and the inclusion $u^*\colon Sh^{uni}\subseteq Sh$ promotes to a natural transformation of such.

\begin{proof}
The second item of Lemma \ref{cpctuni} lets us identify the counit of $u^*_M\otimes u^*_N$ with the tensor product of those of $u^*_M$ and $u^*_N$, and since a left adjoint is fully faithful if and only if its counit is an equivalence, this verifies the full faithfulness claim.

Then for the essential image claim, since $Sh^{uni}(M)$ and $Sh^{uni}(N)$ are both generated by their units, so is $Sh^{uni}(M)\otimes Sh^{uni}(N)$; thus the essential image is necessarily contained in $Sh^{uni}(M\times N)$.  But on the other hand the essential image is indeed stable, has all colimits, and contains $\mathbf{1}$; so the two must agree, as desired.
\end{proof}

Now we finish defining the terms in the statement of Theorem \ref{unithm}.

\begin{definition}
Let $f\colon M\rightarrow N$ be a submersion between orbifolds.  We say that $f$ is $L$-unipotent if $f_\natural(\mathbf{1})\in Sh^{uni}(N).$
\end{definition}

Equivalently, $f$ is $L$-unipotent when $f_\natural$ sends unipotent sheaves to unipotent sheaves.

The class of $L$-unipotent submersions is closed under composition and arbitary base-change (the latter by Lemma \ref{subadj}).

\begin{definition}\label{unidual}
Let $M$ be an orbifold.  We say that $M$ possesses $L$-unipotent duality if the dualizing sheaf $\mathbb{D}_M\in Sh(M)$ and its smash-inverse (c.f. Lemma \ref{duallemma}) are both unipotent.
\end{definition}

For example, if $M$ is a topological manifold which is parallelizable (in the microbundle sense), then $\mathbb{D}_M$ is a constant sphere, so $M$ possesses $L$-unipotent for any $L$.  For another example, if $M=BG$ for a finite group $G$, then there is a natural equivalence $\Delta_\natural\simeq\Delta_*$, so $\mathbb{D}_M=\mathbf{1}$ and thus (trivially) $M$ also possesses $L$-unipotent duality for any $L$.  Further examples can be produced from these by taking products, since Lemma \ref{shprod} and Proposition \ref{cohprop} imply that $\mathbb{D}_{M\times N}$ identifies with the exterior smash product of $\mathbb{D}_M$ and $\mathbb{D}_N$.  See also Proposition \ref{criterion}.

\begin{lemma}\label{unidouble}
Let $f\colon M\rightarrow N$ be an $L$-unipotent submersion between $L$-compact orbifolds with $L$-unipotent duality.  Then the functor $f_\natural$ and its left adjoint $f^\natural$ (\ref{doubleleft}) both preserve the class of unipotent sheaves.
\end{lemma}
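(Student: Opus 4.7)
The plan is to reduce both assertions to concrete unipotency claims about sheaves on $M$, and then to verify the nontrivial one using a transitivity relation for dualizing sheaves.

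The $f_\natural$ case is almost immediate. Being a left adjoint, $f_\natural$ preserves colimits and is exact, so the full subcategory
$$\{\mathcal{F} \in Sh(M) : f_\natural(\mathcal{F}) \in Sh^{uni}(N)\}$$
is stable and closed under colimits. By the definition of $L$-unipotence of $f$ it contains $\mathbf{1}_M$, and therefore, by the defining minimality of $Sh^{uni}(M)$, it contains every unipotent sheaf.

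For $f^\natural$, I would invoke the explicit formula recorded in Section \ref{firststep}: since $M$ is $L$-compact, the map $f$ is $L$-proper, and Proposition \ref{dualprop} lets us read off $f^\natural(\mathcal{G}) \simeq f^*(\mathcal{G}) \wedge \mathbb{D}_{M/N}^{-1}$. Now $f^*$ sends unipotent sheaves to unipotent sheaves (by the same minimality argument: $f^*$ is colimit-preserving, exact, and carries the unit to the unit), and smashing with any fixed unipotent sheaf preserves unipotency (for unipotent $\mathcal{F}$ the full subcategory $\{\mathcal{G} : \mathcal{F} \wedge \mathcal{G} \in Sh^{uni}\}$ is stable, closed under colimits, and contains $\mathbf{1}$). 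So the problem reduces to showing that $\mathbb{D}_{M/N}^{-1}$ is itself unipotent.

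To do this I would establish the transitivity of dualizing sheaves,
$$\mathbb{D}_M \simeq \mathbb{D}_{M/N} \wedge f^* \mathbb{D}_N.$$
Given this, $\mathbb{D}_{M/N}^{-1} \simeq \mathbb{D}_M^{-1} \wedge f^*(\mathbb{D}_N)$ is a smash product of two sheaves that are manifestly unipotent by the hypotheses of $L$-unipotent duality on $M$ and $N$ (together with the fact that $f^*$ preserves unipotency), hence is itself unipotent. To prove the transitivity, set $g\colon N \to *$ and $h = g \circ f$: Proposition \ref{dualprop} applied to $h$ gives $h_\natural(-) \simeq h_*(- \wedge \mathbb{D}_M)$, while applying it separately to $f$ and $g$ together with the projection formula for $f_*$ (Corollary \ref{extraadj} part 2) gives
$$h_\natural(-) \simeq g_*\bigl(f_*(- \wedge \mathbb{D}_{M/N}) \wedge \mathbb{D}_N\bigr) \simeq h_*(- \wedge \mathbb{D}_{M/N} \wedge f^*\mathbb{D}_N).$$
Both expressions for $h_\natural$ admit left adjoints (the dualizing sheaves being invertible, so smashing with them has an adjoint, and $h_*$ has left adjoint $h^*$); by uniqueness these two left adjoints are naturally equivalent, and evaluating them on the unit extracts $\mathbb{D}_M^{-1} \simeq (\mathbb{D}_{M/N} \wedge f^*\mathbb{D}_N)^{-1}$, which upon inversion yields the transitivity. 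I expect the main technical obstacle to lie in this last step, namely organizing the coherences so that the equivalence of functors genuinely comes from a canonical equivalence at the level of dualizing sheaves; but the adjointability formalism of Section \ref{adjstpr} (and Proposition \ref{cohprop} in particular) is designed to streamline exactly this bookkeeping.
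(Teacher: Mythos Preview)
Your proposal is correct and follows essentially the same strategy as the paper: reduce the $f^\natural$ claim to the unipotency of $\mathbb{D}_{M/N}^{-1}$, then obtain this from the transitivity $\mathbb{D}_{M/N}^{-1}\simeq f^*(\mathbb{D}_N)\wedge\mathbb{D}_M^{-1}$ together with the unipotent-duality hypotheses on $M$ and $N$.

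The only difference is in how transitivity is extracted. The paper does it in one line by composing the double-left-adjoints: since $h^\natural=f^\natural\circ g^\natural$, evaluating on $\mathbf{1}$ gives $\mathbb{D}_M^{-1}=f^\natural(\mathbb{D}_N^{-1})=f^*(\mathbb{D}_N^{-1})\wedge\mathbb{D}_{M/N}^{-1}$ directly. Your route---writing two expressions for $h_\natural$ via Proposition~\ref{dualprop} and the projection formula, then passing to left adjoints---reaches the same identity but with more bookkeeping, which is exactly the coherence concern you flag at the end. Using the composition of the $(-)^\natural$'s sidesteps that concern entirely.
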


\begin{proof}
The claim about $f_\natural$ follows immediately from the unipotency of $f$.  As for $f^\natural$, since it differs from $f^*$ by smashing with $\mathbb{D}_{M/N}^{-1}$ (\ref{dualprop}), it suffices to show that $\mathbb{D}_{M/N}^{-1}$ is unipotent.  However, by functoriality of $(-)^\natural$ under composition we have that $\mathbb{D}_M^{-1}$ identifies with $f^\natural((\mathbb{D}_N)^{-1})$, which further identifies with $f^*(\mathbb{D}_N^{-1})\wedge\mathbb{D}_{M/N}^{-1}$; thus $\mathbb{D}_{M/N}^{-1}\simeq f^*(\mathbb{D}_N)\wedge\mathbb{D}_M^{-1}$ is indeed unipotent by the unipotency hypotheses on $M$ and $N$, as required.
\end{proof}

Now, using Corollary \ref{uniprod} and Lemma \ref{unidouble}, we can give the next step of the proof of Theorem \ref{unithm}.

\subsubsection{Second step of the proof of Theorem \ref{unithm}}\label{2ndstep}

It is a further reinterpretation of $(LSph^{top})^{-1}\colon K(CO_*)\rightarrow Pic(LSp)$, or rather of its restriction to $K(CUO_*)$, where $CUO_*$ is the $\infty$-category with fibrations consisting of the $L$-compact orbifolds possessing $L$-unipotent duality, with fibrations the $L$-unipotent submersions.  (It is clear from the definitions that $CUO_*$ is indeed an $\infty$-category with fibrations; incidentally, this is the first claim of Theorem \ref{unithm}.)

This reinterpretation is obtained as follows:  we exactly replicate the construction of $(LSph^{top})^{-1}$ given in the first step of the proof of Theorem \ref{unithm} (see Section \ref{firststep}), except that we replace the $\infty$-categories $Sh$ of sheaves by their full subcategories $Sh^{uni}$ of unipotent sheaves.  All the required formal properties for the construction are still satisfied with this replacement: Corollary \ref{uniprod} gives the symmetric monoidal functoriality of $Sh^{uni}\colon CUO\rightarrow LPr^{op}$, and Lemma \ref{unidouble} immediately implies that Lemmas \ref{subadj} and \ref{doubleleft} (on left adjointability and double-left adjointability of pullbacks by submersions) hold just as well with $Sh^{uni}$ replacing $Sh$ once we're in the $L$-unipotent case.

On the other hand, the construction for $Sh^{uni}$ sits inside the construction for $Sh$ the whole way along via the inclusion $u^*\colon Sh^{uni}\subseteq Sh$; and since in the last step of the construction we restrict to sheaves on a point, where $u^*$ is an equivalence, the two constructions do produce equivalent maps $(LSph^{top})^{-1}\colon K(CUO_*)\rightarrow Pic(LSp)$, as claimed.

This concludes the second step of the proof, which, to summarize, shows that $(LSph^{top})^{-1}\colon K(CUO_*)\rightarrow Pic(LSp)$ comes from the symmetric monoidal functor $Sh^{uni}\colon CUO\rightarrow LPr^{op}$ in the same way that $(Sph^{alg})^{-1}\colon K(PD_*^{op})\rightarrow Pic(LSp)$ was shown to come from the symmetric monoidal functor $Mod\colon PD^{op}\rightarrow LPr^{op}$ at the end of Section \ref{firststep}.

Thus, to finish the proof, it suffices to identify $Sh^{uni}(-)$ with $Mod_{C^*(-)}$.  This will be done in the next and final section.

\subsubsection{The relationship between unipotent sheaves and modules over cochains, and the end of the proof of Theorem \ref{unithm}}

The following proposition lets us recognize which $\infty$-categories are equivalent to modules over some $E_\infty$-algebra in $LSp$, and with functoriality to boot.

\begin{proposition}\label{recognize}
The functor $A\mapsto Mod_A$ from the $\infty$-category of $E_\infty$-algebras in $LSp$ to the $\infty$-category of $E_\infty$-algebras in $LPr$ is fully faithful, and its essential image consists of those $\mathcal{A}\in E_\infty Alg(LPr)$ such that the functor
$$Map(\mathbf{1},-)\colon \mathcal{A}\rightarrow LSp$$
($LSp$-enriched maps out of the unit object of $\mathcal{A}$) preserves colimits and detects equivalences.
\end{proposition}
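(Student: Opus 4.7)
The plan is to realize the statement as an instance of the Barr--Beck--Lurie monadicity theorem, packaged in the symmetric monoidal form provided by the universal property of $Mod_A$ inside $E_\infty Alg(LPr)$.

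First, given any $\mathcal{A}\in E_\infty Alg(LPr)$, the unit object $\mathbf{1}_{\mathcal{A}}$ carries a canonical $E_\infty$-algebra structure in $\mathcal{A}$, so the $LSp$-enriched mapping spectrum $A:=Map_{\mathcal{A}}(\mathbf{1}_{\mathcal{A}},\mathbf{1}_{\mathcal{A}})$ inherits a canonical structure of $E_\infty$-algebra in $LSp$.  The universal property of $Mod_A$ as the free $LPr$-module on $A$ (\cite{ha} Section 4.8.4, interpreted in $LPr$ using idempotency of $LSp$) produces a canonical map $F\colon Mod_A\rightarrow\mathcal{A}$ of $E_\infty$-algebras in $LPr$, sending $A$ to $\mathbf{1}_{\mathcal{A}}$; its right adjoint $G$ is the composite of $Map_{\mathcal{A}}(\mathbf{1}_{\mathcal{A}},-)\colon \mathcal{A}\rightarrow LSp$ with the $A$-module structure coming from the $E_\infty$-structure on $\mathbf{1}_{\mathcal{A}}$.

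The next step is to apply monadicity.  By the $LSp$-linearity of $G$ (automatic since $\mathcal{A}$ is an $LSp$-module and $G$ has a left adjoint), the endofunctor $GF\colon LSp\rightarrow LSp$ is $LSp$-linear and colimit-preserving, hence is given by tensoring with $GF(\mathbf{1})=A$, and its monad structure matches the one coming from the $E_\infty$-algebra $A$ by construction.  The hypothesis that $G$ preserves colimits and is conservative, combined with the fact that we are working in stable $\infty$-categories (so that split geometric realizations are automatic), allows us to invoke \cite{ha} Theorem 4.7.3.5 to conclude that $F\colon Mod_A\rightarrow\mathcal{A}$ is an equivalence of $\infty$-categories.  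Since $F$ was constructed as a map in $E_\infty Alg(LPr)$, this equivalence is automatically one of $E_\infty$-algebras in $LPr$.

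For full faithfulness, note that both sides can be described by the same universal property: for any $\mathcal{B}\in E_\infty Alg(LPr)$, the space $Map_{E_\infty Alg(LPr)}(Mod_A,\mathcal{B})$ is equivalent to the space of $E_\infty$-algebra maps from $A$ into the commutative algebra $\mathbf{1}_{\mathcal{B}}\in\mathcal{B}$, i.e.\ to $Map_{E_\infty Alg(LSp)}(A,Map_{\mathcal{B}}(\mathbf{1}_{\mathcal{B}},\mathbf{1}_{\mathcal{B}}))$ (again \cite{ha} Section 4.8.4).  When $\mathcal{B}=Mod_B$, this right-hand side specializes to $Map_{E_\infty Alg(LSp)}(A,B)$, giving the asserted full faithfulness, and the essential image description we just established shows that this also gives a characterization of when a given $\mathcal{B}$ lies in the image.

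The main point requiring care is verifying that $GF$, viewed as a monad on $LSp$, really is equivalent to the ``tensor with $A$'' monad, since the apparent equivalence of underlying functors needs to be promoted to one of monads (equivalently, of $E_\infty$-algebras in endofunctors); however, this follows from tracing through the construction of $F$ from the universal property, which was $E_\infty$-coherent to begin with.  Everything else is a formal application of adjoint functor theorems and of \cite{ha}.
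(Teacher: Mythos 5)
Your argument is correct in substance, but it is worth knowing that the paper does not prove this statement at all: its entire proof reads ``When $L=id$ this is \cite{ha} Prop.\ 7.1.2.7; the case of general $L$ is just the same.'' What you have written is essentially a reconstruction of the proof of that cited result of Lurie, via the Barr--Beck--Lurie monadicity theorem together with the universal property of $Mod_A$ from \cite{ha} Section 4.8; so the mathematical content is the right one, and your version has the advantage of making visible exactly where each hypothesis (colimit-preservation and conservativity of $Map(\mathbf{1},-)$) enters. Two small points of hygiene. First, your adjunction bookkeeping slips: with $F\colon Mod_A\rightarrow\mathcal{A}$ and $G$ its right adjoint, the composite $GF$ is an endofunctor of $Mod_A$, not of $LSp$; the monad you actually want to analyze is $G'F'$ where $F'\colon LSp\rightarrow\mathcal{A}$ is the colimit-preserving $LSp$-linear functor sending the unit to $\mathbf{1}_{\mathcal{A}}$ and $G'=Map_{\mathcal{A}}(\mathbf{1}_{\mathcal{A}},-)$, so that $G'F'$ is $LSp$-linear, colimit-preserving, and hence smashing with $A=G'F'(\mathbf{1})$; Barr--Beck--Lurie applied to this adjunction identifies $\mathcal{A}$ with modules over the monad, and one then identifies those with $Mod_A$. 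Second, the applicability of \cite{ha} Thm.\ 4.7.3.5 is not because ``split geometric realizations are automatic'' in the stable setting, but because $\mathcal{A}$ is presentable (so has all geometric realizations) and $G'$ preserves them by the colimit-preservation hypothesis. Neither issue affects the validity of the approach, and your identification of the full-faithfulness statement with the universal property $Map_{E_\infty Alg(LPr)}(Mod_A,\mathcal{B})\simeq Map_{E_\infty Alg(LSp)}(A,End_{\mathcal{B}}(\mathbf{1}_{\mathcal{B}}))$ is exactly right; you should also record the (trivial) observation that $Mod_B\rightarrow LSp$ itself preserves colimits and is conservative, so the image lands in the described class.
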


\begin{proof}
When $L=id$ this is \cite{ha} Prop.\ 7.1.2.7; the case of general $L$ is just the same.
\end{proof}

Evidently, the inverse functor is given by sending such an $\mathcal{A}\in E_\infty Alg(LPr)$ to the $E_\infty$-algebra $End(\mathbf{1})$, meaning the $LSp$-enriched endomorphisms of the unit object of $\mathcal{A}$, with $E_\infty$-structure coming from the symmetric monoidal structure on $\mathcal{A}$.  Note that when $\mathcal{A}=Sh^{uni}(M)$, this endomorphism algebra agrees with the $E_\infty$-algebra $C^*(M)=\underline{Map}(p_\natural\mathbf{1},\mathbf{1})$ appearing in the statement of Theorem \ref{unithm}.  Using this remark, we deduce:

\begin{lemma}\label{translation}
There is an equivalence of symmetric monoidal functors $CO\rightarrow LPr^{op}$ between $M\mapsto Sh^{uni}(M)$ and $M\mapsto Mod_{C^*(M)}$.
\end{lemma}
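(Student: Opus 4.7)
The strategy is to apply Proposition \ref{recognize} fiberwise in $M$, then invoke the full faithfulness of $Mod$ as a symmetric monoidal $\infty$-functor to upgrade the pointwise equivalence to a natural one.

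First I would verify, for each fixed $M\in CO$, that $Sh^{uni}(M)\in E_\infty\mathrm{Alg}(LPr)$ lies in the essential image of $Mod$. By Proposition \ref{recognize}, this reduces to checking that the functor $\underline{Map}_{Sh^{uni}(M)}(\mathbf{1},-)\colon Sh^{uni}(M)\to LSp$ both preserves colimits and detects equivalences. Since the inclusion $u^*\colon Sh^{uni}(M)\hookrightarrow Sh(M)$ is fully faithful with right adjoint $u_*$, this mapping functor factors as $p_*\circ u_*$. Lemma \ref{cpctuni}(2) gives that $u_*$ preserves colimits for $L$-compact $M$, and $L$-compactness of $M$ directly provides colimit preservation of $p_*$; detection of equivalences is Lemma \ref{cpctuni}(1). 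The inverse equivalence provided by the recognition principle then identifies $Sh^{uni}(M)$ with $Mod_{End(\mathbf{1})}$, and the remark just preceding the lemma already identifies $End(\mathbf{1})$ with $C^*(M)$.

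To upgrade to naturality and symmetric monoidal functoriality, I would use that $Mod\colon E_\infty\mathrm{Alg}(LSp)\to E_\infty\mathrm{Alg}(LPr)$ is a fully faithful symmetric monoidal functor (source with coproduct structure, target with $\otimes$). The functor $Sh^{uni}\colon CO\to LPr^{op}$ canonically promotes to a symmetric monoidal functor $CO\to E_\infty\mathrm{Alg}(LPr)^{op}$, since each pullback $f^*$ is symmetric monoidal and colimit-preserving and the target is symmetric monoidal by Corollary \ref{uniprod}. The previous step shows that this lift lands in the essential image of $Mod^{op}$, and hence by full faithfulness factors essentially uniquely through a symmetric monoidal functor $F\colon CO\to E_\infty\mathrm{Alg}(LSp)^{op}$. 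Evaluating pointwise gives $F(M)=C^*(M)$, producing the required equivalence of symmetric monoidal functors $Sh^{uni}(-)\simeq Mod_{C^*(-)}$.

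The main obstacle, though mostly one of bookkeeping, is ensuring that all the symmetric monoidal and contravariant structures are tracked consistently --- in particular, that $Mod$ is fully faithful as a symmetric monoidal functor, not merely pointwise, so that the factoring of $Sh^{uni}$ through $Mod^{op}$ is itself symmetric monoidal. This follows by applying Proposition \ref{recognize} to mapping spaces in the appropriate $\infty$-categories of symmetric monoidal colimit-preserving functors, which is already implicit in the cited material from \cite{ha} Section 7.1.2.
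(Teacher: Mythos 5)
Your proposal is correct and is essentially the paper's own argument, just spelled out in more detail: the paper likewise verifies via Lemma \ref{cpctuni} that each $Sh^{uni}(M)$ lies in the essential image of the fully faithful functor $A\mapsto Mod_A$ of Proposition \ref{recognize}, identifies $End(\mathbf{1})$ with $C^*(M)$ by the preceding remark, and uses full faithfulness to promote the pointwise identification to an equivalence of (symmetric monoidal) functors.
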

\begin{proof}
The first claim of Lemma \ref{cpctuni} shows that the categories $Sh^{uni}(M)$ with their smash product symmetric monoidal structure are in the essential image of the fully faithful functor $A\mapsto Mod_A$ of Proposition \ref{recognize}.  This yields the equivalent claim that there is an equivalence of functors $CO\rightarrow E_\infty Alg(LPr)^{op}$ between $Sh^{uni}(-)$ and $Mod_{C^*(-)}$.\end{proof}

Lemma \ref{translation} essentially finishes the proof of Theorem \ref{unithm}, since it identifies the construction of $(LSph^{top})^{-1}\colon K(CUO_*)\rightarrow Pic(LSp)$ given in Section \ref{2ndstep} with the construction of $(Sph^{alg})^{-1}(C^*(-))\colon K(CUO_*)\rightarrow Pic(LSp)$ given at the end of Section \ref{firststep}.  The only thing that's missing is the second claim of Theorem \ref{unithm}, which states that the contravariant functor $C^*\colon CUO\rightarrow E_\infty Alg(LSp)$ sends $L$-unipotent submersions to Poincar\'{e} duality maps and sends pullbacks by $L$-unipotent submersions to pushouts.  This is really just incidental, but we prove it anyway, as an immediate consequence of the following lemma:

\begin{lemma}
\begin{enumerate}
\item Let $f\colon A\rightarrow B$ be a map of $E_\infty$-algebras in $LSp$.  Suppose that $f^*\colon Mod_A\rightarrow Mod_B$ is doubly left adjointable, and that the evaluation of its double left adjoint on the unit object is an invertible object of $Mod_B$.  Then $f$ is a Poincar\'{e} duality map.
\item Let $\sigma$ be a commutative square of $E_\infty$-algebras in $LSp$ such that the square $Mod_\sigma$ in $LPr$ is (say) vertically left adjointable.  Then $\sigma$ is a pushout.
\end{enumerate}
\end{lemma}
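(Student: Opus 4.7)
For part 1, the plan is to unwind the content of double left adjointability for the base-change functor $f^*\colon Mod_A \to Mod_B$, $M\mapsto M\otimes_A B$. This functor always preserves colimits and admits the forgetful functor as a right adjoint; the existence of a left adjoint $f_\natural$ is equivalent to $f^*$ preserving limits, which by standard module theory (e.g., \cite{ha} Prop.\ 7.2.4.4) amounts to $B$ being dualizable as an $A$-module---giving the first Poincaré duality condition. In that case one identifies $f_\natural$ with the relative dual functor $f_\natural(N) \simeq N \otimes_B B^\vee$, where $B^\vee = Map_A(B,A)$ carries its natural $(B,A)$-bimodule structure. Letting $L := f^\natural(\mathbf{1})$ denote the value of the hypothesized double left adjoint on the unit, the adjunctions yield
$$Map_B(L, N) \simeq Map_A(\mathbf{1}, f_\natural N) \simeq N \otimes_B B^\vee,$$
naturally in $N \in Mod_B$. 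Specialising $N = B$ gives $Map_B(L, B) \simeq B^\vee$; if $L$ is invertible with inverse $L^{-1}$, the left side computes to $L^{-1}$, so $B^\vee \simeq L^{-1}$ is invertible in $Mod_B$, establishing the second PD condition.

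For part 2, write the given square as
$$\xymatrix{A \ar[r]^{u}\ar[d]_{\phi} & B\ar[d]^{\psi} \\ A' \ar[r]_{v} & B'.}$$
Vertical left adjointability of $Mod_\sigma$ asserts that $\phi^*, \psi^*$ admit left adjoints $\phi_\natural, \psi_\natural$ and that the canonical Beck--Chevalley map
$$\alpha\colon \psi_\natural \circ v^* \longrightarrow u^* \circ \phi_\natural$$
of functors $Mod_{A'}\to Mod_B$ is an equivalence. Since $u^*, v^*$ always admit right adjoints $u_*, v_*$ (restriction of scalars), the equivalence $\alpha$ induces a mate equivalence
$$\alpha^R\colon \phi^*\circ u_* \simeq v_*\circ\psi^*\colon Mod_B \longrightarrow Mod_{A'}$$
by uniqueness of right adjoints. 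Evaluating both sides at $B \in Mod_B$, the left is $\phi^*(u_* B) \simeq B\otimes_A A'$ as an $A'$-module, and the right is $v_*(\psi^* B) \simeq v_*(B')\simeq B'$. So $\alpha^R_B$ furnishes an equivalence $B\otimes_A A' \simeq B'$ in $Mod_{A'}$.

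To conclude that $\sigma$ is a pushout of $E_\infty$-algebras, I would identify $\alpha^R_B$ with the canonical comparison map $B\otimes_A A' \to B'$ coming from the universal property of the pushout applied to $\sigma$; the identification is a formal consequence of the mate correspondence, once one traces how the Beck--Chevalley map is constructed from the commutativity data of $\sigma$. Since an algebra map which is an equivalence of underlying $A'$-modules is automatically an equivalence of $E_\infty$-algebras, this exhibits $\sigma$ as a pushout. I expect the main bookkeeping obstacle to lie precisely in this identification: confirming via the mate correspondence that $\alpha^R_B$ really is the canonical comparison map, as opposed to some a priori unrelated equivalence. The remaining steps---identifying adjoints as bimodule tensor products and invoking uniqueness to pass to right adjoints---are routine.
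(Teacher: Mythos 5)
Your argument is correct and matches the paper's proof in essentially every step: part 1 is the Morita-theoretic identification of the double left adjoint of $f^*$ (whose value on the unit is inverse to the $A$-dual $B^\vee$), and part 2 passes to the horizontal right adjoints (restriction of scalars) and evaluates the resulting commutative square on the unit object $B\in Mod_B$ to exhibit $A'\otimes_A B\simeq B'$. The identification of the mate at $B$ with the canonical comparison map, which you flag as the remaining bookkeeping, is precisely the step the paper also leaves implicit, so you are at the same level of rigor.
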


\begin{proof} In terms of the Morita theory of \cite{ha} Prop.\ 7.1.2.4 (or rather its $L$-local analog, proved in the same way), the functor $f^*$ corresponds to $B$ as an $A\text{-} B$-bimodule; and its left adjoint, when colimit-preserving, corresponds to the $A$-dual of $B$, viewed as a $B\text{-}A$-bimodule; and the left adjoint of that, when colimit-preserving, corresponds to the $B$-dual of the $A$-dual of $B$, viewed as an $A\text{-}B$-bimodule.  This makes the first assertion plain.

For the second assertion, we note that vertical left adjointability implies horizontal right adjointability by passing to right adjoints; but then evaluating the right adjointed square on the unit object exactly shows that the map from the pushout of the initial part of the square to its terminal object is an equivalence, since pushouts are computed by relative tensor product in this setting.\end{proof}

We finish the appendix with two supplementary results.  The first gives a concrete criterion for unipotency in certain cases:

\begin{proposition}\label{criterion}
Assume our Bousfield localization is $L_{HR}$ for some (discrete) ring $R$.  Let $M$ be an $L_{HR}$-compact orbifold such that $\pi_0(|M|)$ is finite, and let $\mathcal{F}\in Sh(M)$ be a sheaf on $M$ which is locally constant.

Suppose that for every point $x\colon *\rightarrow M$, the $R$-homology groups of the fiber $x^*\mathcal{F}\in L_{HR}Sp$ are zero outside finitely many degrees, and in all degrees have trivial $\pi_1(|M|,x)$-action up to a finite filtration.

Then $\mathcal{F}$ is unipotent.
\end{proposition}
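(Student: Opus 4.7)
The plan is to exhibit $\mathcal{F}$ as built from the unit sheaf $\mathbf{1}$ by a sequence of reductions, each using closure of $Sh^{uni}(M)$ under colimits, shifts, and cofibers.

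First I would reduce to the case where $M$ is connected. Since $\pi_0(|M|)$ is finite, $M$ decomposes as a disjoint union of orbifolds $M = \bigsqcup_i M_i$, one per component. On such a disjoint union the unit splits as an internal direct sum $\mathbf{1}_{Sh(M)} \simeq \bigoplus_i \mathbf{1}_{Sh(M_i)}$; the corresponding projection idempotents $e_i \colon \mathbf{1} \to \mathbf{1}$ can be split inside the unipotent subcategory as the telescope colimits $\operatorname{colim}(\mathbf{1} \xrightarrow{e_i} \mathbf{1} \xrightarrow{e_i} \cdots)$, so each $\mathbf{1}_{Sh(M_i)}$ already lies in $Sh^{uni}(M)$. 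Consequently $\mathcal{F}$ is unipotent on $M$ iff each restriction $\mathcal{F}|_{M_i}$ is unipotent on $M_i$, reducing the problem to the connected case.

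Second, assuming $M$ connected with basepoint $x$, I would use the shape-theoretic identification of locally constant sheaves of $L_{HR}$-local spectra on $M$ with local systems on $|M|$ (that is, $L_{HR}Sp$-valued functors out of the shape), which intertwines $x^*$ with evaluation at $x$. The $t$-structure on $L_{HR}Sp$ then pulls back objectwise, giving $\mathcal{F}$ a Postnikov filtration whose successive quotients are shifts of locally constant sheaves with Eilenberg--MacLane fibers $HA_i$ for $R$-modules $A_i$ carrying a $\pi_1(|M|, x)$-action. By hypothesis only finitely many $A_i$ are nonzero, so this filtration is finite, and closure of $Sh^{uni}(M)$ under cofibers and shifts reduces me to the Eilenberg--MacLane case. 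There, the $\pi_1$-filtration hypothesis permits a further finite inductive reduction by cofibers to the case where $\pi_1(|M|, x)$ acts trivially on $A$, i.e.\ to a constant sheaf $p^*(HA)$ with $p\colon M \to *$. For such a constant sheaf the conclusion is immediate: $HA$ is built as a (filtered) colimit of shifts of $L_{HR}S$ in $L_{HR}Sp$, since $L_{HR}S$ generates $L_{HR}Sp$ under colimits, and $p^*$ preserves colimits and shifts, so $p^*(HA)$ is built from $\mathbf{1}_{Sh(M)} = p^*(L_{HR}S)$ via the same operations and hence lies in $Sh^{uni}(M)$.

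The main obstacle will be making the second reduction precise --- specifically, verifying that the finite $R$-homology hypothesis on the fiber really does yield a finite global Postnikov-type filtration of $\mathcal{F}$ whose associated graded pieces are locally constant sheaves with Eilenberg--MacLane fibers. This rests on the compatibility between the $L_{HR}Sp$ $t$-structure and local constancy, and is plausibly where the $L_{HR}$-compactness of $M$ enters, via Lemma~\ref{cpctuni}: the fact that pushforward to the point preserves colimits and detects equivalences on unipotent sheaves is what allows fiberwise $R$-homological data to control sheaf-level truncations globally. Once this reduction is set up, the remaining steps are formal inductions using only the closure properties of the unipotent subcategory.
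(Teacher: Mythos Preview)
Your inductive skeleton (connected reduction, then homology-degree induction, then $\pi_1$-filtration induction, then constant case) matches the paper's. But there is a genuine gap at exactly the point you flag as ``the main obstacle,'' and your proposed resolution is not the right one.

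The hypothesis bounds the $R$-\emph{homology} groups of the fiber, not its homotopy groups. A Postnikov filtration in $L_{HR}Sp$ has layers governed by ordinary homotopy groups, which are neither $R$-modules in general nor bounded by the hypothesis; so your sentence ``successive quotients are shifts of locally constant sheaves with Eilenberg--MacLane fibers $HA_i$ for $R$-modules $A_i$ \dots\ by hypothesis only finitely many $A_i$ are nonzero'' does not follow. For instance when $R=\mathbb{Z}$ the category $L_{H\mathbb{Z}}Sp$ contains the sphere, whose Postnikov layers are not $H\mathbb{Z}$-modules.

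The paper's fix is not a compatibility between the $t$-structure and local constancy; it is to \emph{change categories} before filtering. One checks that the counit $u^*u_*\mathcal{F}\to\mathcal{F}$ is an equivalence, and since we are $HR$-local it suffices to check this after smashing with $HR$. The $L_{HR}$-compactness of $M$ enters here, and only here: it makes $u_*$ colimit-preserving (Lemma~\ref{cpctuni}), so $(-)\wedge HR$ commutes with the counit and the question becomes one about $\mathcal{F}\wedge HR$ inside sheaves of $HR$-modules. In $Mod_{HR}$ the homotopy groups \emph{are} the $R$-homology groups, so now the finite-degree hypothesis really does give a finite Postnikov filtration with $R$-module layers, and your remaining inductions go through verbatim. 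Once you insert this $HR$-smash step, your argument is essentially the paper's.
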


To explain the $\pi_1(|M|,x)$-action on the $R$-homology of $x^*\mathcal{F}$ used in the above statement, recall from \cite{ha} Thm.\ A.1.5 (or rather its $L_{HR}$-analog, which, since the functor $\psi^*$ of loc.\ cit.\ is left adjointable, follows from \cite{ha} Thm.\ A.1.5 and Proposition \ref{cohprop}) that the full subcategory of $Sh(M)$ consisting of the locally constant sheaves identifies with the functor $\infty$-category $Fun(|M|,L_{HR}Sp)$; thus locally constant sheaves are specified by monodromy data.

\begin{proof}
Recall that $u^*\colon Sh^{uni}(M)\rightarrow Sh(M)$ denotes the inclusion of the unipotent sheaves, and that $u_*$ denotes the right adjoint of $u^*$.  Thus we need to show that $u^*u_*\mathcal{F}\rightarrow\mathcal{F}$ is an equivalence.  Since we are working $HR$-locally, we can check this after smashing with $HR$, or equivalently after tensoring our $\infty$-categories with $Mod_{HR}$.  Then since $u_*$ preserves colimits (Lemma \ref{cpctuni}, claim 2), the counit $u^*u_*\rightarrow id$ tensored with $Mod_{HR}$ identifies with the analogous counit for sheaves of $HR$-modules; thus, letting $\mathcal{L}=\mathcal{F}\wedge HR$ and $X=|M|$, we are reduced to showing that if $\mathcal{L}\in Fun(X,Mod_{HR})$ is a local system of $HR$-modules on a space $X\in \mathcal{S}$ with $\pi_0(X)$ finite --- or let's say $X$ connected for simplicity --- and the $R$-homology groups of $\mathcal{L}$ vanish in all but finitely many degrees and in all degrees have trivial $\pi_{\leq 1}(X)$-action up to a finite filtration, then $\mathcal{L}$ can be built out of the constant local system $HR$ on $X$ via shifts and colimits.

However, by induction on the number of nonvanishing homology groups we can reduce to where $\mathcal{L}$ is concentrated in a single degree, and thus up to shifts is represented by an $R[\pi_{\leq 1}(X)]$-module $L$; and then since an extension of one $R[\pi_{\leq 1}(X)]$-module $L_0$ by another $L_1$ can be built as the cofiber of a map $L_0[-1]\rightarrow L_1$ of corresponding local systems of $HR$-modules, we again reduce by induction to where the $\pi_{\leq 1}(X)$-action on $L$ is trivial; but then $L$ is pulled back from the point, and hence is certainly built from $HR$ via desuspensions and colimits, since every $HR$-module on the point is so.
\end{proof}

Finally, here is a result which unwinds some of the above proof of Theorem \ref{unithm} into a concrete statement.

\begin{proposition}\label{algtriv}
Let $Triv_*^\sim$ denote the $E_\infty$-space of $L$-compact pointed orbifolds $M$ with $L$-unipotent duality such that the unit map $\mathbf{1}\rightarrow C^*(M)$ is an equivalence, with $E_\infty$-structure of cartesian product.

Then for $*\overset{e}{\longrightarrow}M\overset{p}{\longrightarrow}*$ in $Triv_*^\sim$ the composition of natural maps $p_*\rightarrow e^*\rightarrow p_\natural\colon Sh(M)\rightarrow Sh(*)$ is an equivalence, and the following two trivializations of $LSph^{top}\colon Triv_*^\sim\rightarrow Inv(LSp)$ agree:
\begin{enumerate}
\item First, the one determined by $LSph^{top}(M)\simeq Sph^{alg}(C^*(M))\simeq Sph^{alg}(\mathbf{1})\simeq \mathbf{1}$, where the first equivalence comes from Theorem \ref{unithm};
\item Second, the one determined by $LSph^{top}(M)=p_\natural e_*(\mathbf{1})\simeq p_*e_*(\mathbf{1})\simeq \mathbf{1},$ where we use the inverse to the above map $p_*\rightarrow e^*\rightarrow p_\natural$ in the first equivalence.
\end{enumerate}
\end{proposition}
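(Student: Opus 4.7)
The plan is to transport both trivializations to the algebraic world via the Morita equivalence $Sh^{uni}(M) \simeq Mod_{C^*(M)}$ of Lemma \ref{translation}, and then exploit the hypothesis $\mathbf{1} \simeq C^*(M)$ to collapse everything. Under this equivalence, the maps $e \colon * \to M$ and $p \colon M \to *$ correspond respectively to the augmentation $\alpha \colon C^*(M) \to \mathbf{1}$ and the unit $\eta \colon \mathbf{1} \to C^*(M)$, and the sheaf-theoretic functors restricted to unipotent sheaves translate as follows: $p^*$ as $-\otimes_{\mathbf{1}} C^*(M)$, $p_*$ as the forgetful functor, $p_\natural$ as $-\otimes_{C^*(M)}C^*(M)^\vee$, $e^*$ as base change $-\otimes_{C^*(M)}\mathbf{1}$, and $e_*$ as restriction along $\alpha$. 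This correspondence is forced by Proposition \ref{recognize} together with the symmetric-monoidal naturality of Lemma \ref{translation}, though the translation of $p_\natural$ (requiring double left adjoints, cf.\ Section \ref{firststep}) needs the dualizability assumption on $C^*(M)$.

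For the first claim, that $p_* \to e^* \to p_\natural$ is an equivalence on $e_*(\mathbf{1})$, I would compute the three terms directly: $p_*e_*(\mathbf{1}) = (pe)_*(\mathbf{1}) = \mathbf{1}$; algebraically $e^*e_*(\mathbf{1}) = \mathbf{1}\otimes_{C^*(M)}\mathbf{1}$, which under $C^*(M) \simeq \mathbf{1}$ reduces to $\mathbf{1}$; and $p_\natural e_*(\mathbf{1}) = \mathbf{1}\otimes_{C^*(M)}C^*(M)^\vee = Sph^{alg}(C^*(M))$, which likewise reduces to $\mathbf{1}$. The two transformations in the composite come from the unit and counit of the adjunctions $(\alpha^*, \alpha_*)$ and $(\eta_\natural, \eta^*)$, specifically $\eta_* \to \eta_* \alpha_* \alpha^* = \alpha^*$ (using $\alpha\eta = \mathrm{id}$) and $\alpha^* \to \alpha^*\eta^*\eta_\natural = \eta_\natural$. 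Under the trivialization $C^*(M) \simeq \mathbf{1}$, each of these units/counits becomes the identity, so the composite is the identity $\mathbf{1} \to \mathbf{1}$; the verification reduces to triangle identities for the relevant adjunctions.

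For the agreement of the two trivializations, I would observe that both express the same identification $p_\natural e_*(\mathbf{1}) \simeq \mathbf{1}$, obtained by the same algebraic manipulation in $Mod_{C^*(M)}$. Trivialization (A) invokes Theorem \ref{unithm} to transport $p_\natural e_*(\mathbf{1})$ to the algebraic $Sph^{alg}(C^*(M))$ and then uses the hypothesis; but by the construction in Sections \ref{firststep} and \ref{2ndstep}, the Theorem \ref{unithm} equivalence is realized precisely by passing through the Morita picture and the double-left-adjoint framework, so (A) literally corresponds to the chain $p_\natural e_*(\mathbf{1}) \simeq \mathbf{1}\otimes_{C^*(M)}C^*(M)^\vee \simeq \mathbf{1}$. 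Trivialization (B) uses the zigzag $\mathbf{1} = p_*e_*(\mathbf{1}) \to p_\natural e_*(\mathbf{1})$ via the natural transformation of the first claim, which algebraically is the chain $\mathbf{1} \to \mathbf{1}\otimes_{C^*(M)}\mathbf{1} \to \mathbf{1}\otimes_{C^*(M)}C^*(M)^\vee$; under $C^*(M) \simeq \mathbf{1}$, both chains become the identity on $\mathbf{1}$. The main obstacle is the careful bookkeeping to verify that the Morita transport (A) of the Theorem \ref{unithm} equivalence really is the same chain of canonical algebraic equivalences realized by (B); this is essentially forced by the naturality in Proposition \ref{recognize}, but spelling it out requires patient tracing of the adjunction data through the multi-step construction that produces the equivalence in Theorem \ref{unithm}.
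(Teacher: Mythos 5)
Your overall strategy --- transporting everything to $Mod_{C^*(M)}$ via Lemma \ref{translation} and collapsing it using the hypothesis $\mathbf{1}\simeq C^*(M)$ --- is the same as the paper's, and your handling of the first claim (identifying the two transformations with units/counits of the relevant adjunctions and noting they become identities) is essentially the paper's argument. The gap is in the third paragraph, where you defer the agreement of the two trivializations to ``careful bookkeeping'': that bookkeeping is the actual content of the proposition, and your description of trivialization (A) elides the point where it lives. The equivalence of Theorem \ref{unithm} is constructed (Sections \ref{firststep} and \ref{2ndstep}) for the \emph{inverse} spheres, i.e.\ on the double-left-adjoint side: it identifies $e^*p^\natural$ with its algebraic counterpart, not $p_\natural e_*$ directly. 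Under the hypothesis, $p^*\colon Sh(*)\rightarrow Sh^{uni}(M)$ is an equivalence, $p^\natural$ canonically identifies with $p^*$, and trivialization (A) is by definition the right adjoint (mate) of the resulting equivalence $id\simeq e^*p^*\simeq e^*p^\natural$. So the agreement of (A) and (B) is precisely the assertion that $p_*\rightarrow e^*\rightarrow p_\natural$ is the mate of the canonical equivalence $p^\natural\simeq p^*$ --- an adjointness identity, not a tensor-formula computation.

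The paper settles this by restricting to $Sh^{uni}(M)$ (legitimate since $p^\natural$ and $p^*$ land there), transporting along the equivalence $p^*$, and observing that $p_*p^*$, $e^*p^*$, $p_\natural p^*$, $p_\natural p^\natural$ and all the comparison maps between them are compatibly identified with the identity functor via the relevant units and counits. Your computation of the three values $p_*e_*(\mathbf{1})$, $e^*e_*(\mathbf{1})$, $p_\natural e_*(\mathbf{1})$ shows the two trivializations have the same source and target, but not that they agree as maps; to close the argument you need the mate computation (or an equivalent explicit trace of the adjunction data through the construction of Theorem \ref{unithm}), which is where all the work is.
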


\begin{proof}
Note that, by Lemma \ref{translation} and the hypothesis on $C^*(M)$, the functor $p^*\colon Sh(*)\rightarrow Sh^{uni}(M)$ is an equivalence.  In particular, $p^*$ canonically identifies with its double left adjoint $p^\natural$, and the the first trivialization of $LSph^{top}(M)$ given above is obtained from the resulting equivalence $id\simeq e^*p^* \simeq e^*p^\natural$ by passing to right adjoints.  Thus what has to be checked is that the natural transformation $p_*\rightarrow e^*\rightarrow p_\natural$ is right adjoint to this canonically defined equivalence $p^\natural\simeq p^*$.

However, since these left adjoints $p^\natural$ and $p^*$ land in $Sh^{uni}(M)$, they are also adjoint to the unipotent restrictions of $p_\natural$ and $p_*$; thus we see that it suffices to check the adjointness of $p_*\rightarrow e^*\rightarrow p_\natural$ and $p^\natural\simeq p^*$ after restricting the former to $Sh^{uni}(M)$.  Transporting via $p^*\colon Sh(*)\simeq Sh^{uni}(M)$, it therefore suffices to check that $p_*p^*\rightarrow e^*p^*\rightarrow p_\natural p^*$ is right adjoint to $p_\natural p^\natural\simeq p_\natural p^*$.  However, $p^*$ being fully faithful, all these functors are compatibly identified with the identity (via, respectively, the unit $id\rightarrow p_*p^*$, the equivalence $e^*p^*\simeq id$, the counit $p_\natural p^*\rightarrow id$, the unit $id\rightarrow p_\natural p^\natural$, and the counit $p_\natural p^*\rightarrow id$).
\end{proof}

\end{document}